\newtheorem{theorem}{Theorem}[section]
\newtheorem{lemma}[theorem]{Lemma}
\newtheorem{proposition}[theorem]{Proposition}
\newtheorem{corollary}[theorem]{Corollary}
\newtheorem{remark}[theorem]{Remark}
\newtheorem{definition}[theorem]{Definition}
\newcommand{\Sz}{\mathrm{Sz}}
\newcommand{\Ree}{\mathrm{Ree}}
\newcommand{\PSL}{\mathrm{PSL}}
\newcommand{\PGU}{\mathrm{PGU}}
\newcommand{\id}{\mathrm{id}}
\newcommand{\ord}{\mathrm{ord}}
\newcommand{\Aut}{\mathrm{Aut}}
\newcommand{\lcm}{\mathrm{lcm}}
\newcommand{\im}{\mathrm{im}}
\newcommand{\Z}{\mathbb{Z}}
\title{Classification of all Galois subcovers of the Skabelund maximal curves}
\date{}
\author{Peter Beelen, Leonardo Landi and Maria Montanucci}
\begin{document}
\maketitle

\begin{abstract}
In 2017 Skabelund constructed two new examples of maximal curves $\tilde{\mathcal{S}}_q$ and $\tilde{\mathcal{R}}_q$ as covers of the Suzuki and Ree curves, respectively. The resulting Skabelund curves are analogous to  the Giulietti-Korchmáros cover of the Hermitian curve.
In this paper a complete characterization of all Galois subcovers of the Skabelund curves $\tilde{\mathcal{S}}_q$ and $\tilde{\mathcal{R}}_q$ is given.
Calculating the genera of the corresponding curves, we find new additions to the list of known genera of maximal curves over finite fields.
\end{abstract}

\noindent
AMS: 11G20, 14H25, 14H37

\vspace{1ex}
\noindent
Keywords: Suzuki and Ree curves, Skabelund maximal curves, genus spectrum of maximal curves.

\section{Introduction}

Let $\mathbb F_{Q^2}$ be a finite field with $Q^2$ elements where $Q$ is a power of a prime $p$.
For an algebraic (projective, absolutely irreducible, nonsingular) curve $\mathcal C$ of genus $g(\mathcal C)$ over $\mathbb F_{Q^2}$, the Hasse--Weil bound states that:
$$N(\mathcal C)\leq Q^2+1+2g(\mathcal C)Q,$$
where $N(\mathcal C)$ denotes the number of rational points of $\mathcal C$. The curve $\mathcal C$ is called maximal if $N(\mathcal C)= Q^2+1+2g(\mathcal C)Q,$ that is, if $\mathcal C$ has the largest possible number of rational points that it can have according to
the value $g(\mathcal C)$ of its genus.
Maximal curves have interesting properties and have also been intensively investigated during the last years for their applications in coding theory.
Surveys on maximal curves are found in \cite{11, 12, 16, 34, 35} and \cite[Chapter 10]{HKT}; see also \cite{9, 10, Sti}.

Important examples of maximal curves are the so-called Deligne--Lusztig curves, namely the Hermitian, Suzuki and Ree curves.
All these curves have a large automorphism group when compared with their genus, as they do not satisfy the classical Hurwitz bound $|\Aut(\mathcal C)| \leq 84(g(\mathcal C)-1)$.
Also their automorphism groups are well-studied examples of finite $2$-transitive groups, namely the projective unitary group $\PGU(3,q)$, the Suzuki group $\Sz(q)$ and the Ree group $\Ree(q)$, respectively.
The Deligne--Lusztig curves have been intensively investigated in the last decades.
Among other reasons, such as their connection with class field theory (see \cite{21}) and their applications to coding theory, the interest on this class of maximal curves is motivated by the fact that a subcover of a maximal curve over the same field of definition is maximal by a theorem of Serre \cite{Lach}. This implies that when given a maximal curve $\mathcal C$ over $\mathbb{F}_{Q^2}$ with many automorphisms,
computing Galois subcovers corresponding to subgroup $H$ of $\Aut(\mathcal C)$ gives rise to many examples of maximal curves.

Since all maximal subgroups of $\PGU(3,q)$, $\Sz(q)$ and $\Ree(q)$ are known, subgroups of these and the corresponding Galois subcovers have been studied in various papers, see for example \cite{AQ,BMXY,DMZ,GSX,MZ, MZq14}.
Many genera of maximal curves have been obtained in this way, adding to the understanding of the genus spectrum of maximal curves.

In \cite{GK} Giulietti and Korchm\'aros introduced a new maximal curve (known as the GK curve) over finite fields $\mathbb F_{q^6}$, which are not subcover of the Hermitian curve over the
corresponding field for $q>2$. Surprisingly, the GK curve was constructed as a Galois cover of the Hermitian function field over $\mathbb{F}_{q^2}$. Considering subcovers of the GK curve, gives rise
to new genera of maximal curves. Such examples were found initially in \cite{FG2,GQZ}. Later, the GK curve was generalized in \cite{GGS} to a family of maximal curves over finite fields
$\mathbb F_{q^{2n}}$ with $n$ odd. These maximal curves are often called the Garcia--G\"{u}neri--Stichtenoth (GGS) curves. All subgroups of the automorphism groups of these curves were
classified in \cite{ABB}, but before that several Galois subcovers were already determined in \cite{GOS}.

Recently a second generalization of the GK curve was discovered \cite{BM}. As for the GGS curves, for each odd $n$ a maximal curve $K_n$ was found over $\mathbb F_{q^{2n}}.$
Though the genus of $K_n$ is equal to that of the corresponding GGS curve, their automorphism groups are different. A preliminary study in \cite{BM} already revealed that new genera of maximal curves can be
obtained by considering Galois subcovers of $K_n.$
A more detailed study of Galois subcovers of the second generalization of the GK function field was provided in \cite{BM2}.

In \cite{Sk}, Skabelund constructed two Galois covers of the Suzuki and Ree curves, $\tilde{\mathcal{S}}_q$ and $\tilde{\mathcal{R}}_q$, reproducing the way in which the GK curve was constructed as a
cover of the Hermitian curve on the two other Deligne--Lusztig curves. The curves $\tilde{\mathcal{S}}_q$ and $\tilde{\mathcal{R}}_q$ can be described as follows.

For any $s \in \mathbb{N}$, $s \geq 1$, let $q_0 = 2^s$, $q = 2q_0^2 = 2^{2s+1}$ and $m = q - 2q_0 + 1$.
The Skabelund curve $\tilde{\mathcal{S}}_q$ is given by $\tilde{\mathcal{S}}_q=\mathbb{F}_{q^4}(x,y,t)$, where
$$
\begin{cases}
  y^q + y = x^{q_0} (x^q + x) \\
  t^m = x^q + x.
\end{cases} $$
The curve $\tilde{\mathcal{S}}_q$ is maximal over the field $\mathbb{F}_{q^4}$. Its automorphism group is $\Aut(\tilde{\mathcal{S}}_q) = \Sz(q) \times C_m$, see \cite{GMQZ}.

Similarly, for any $s \in \mathbb{N}$, $s \geq 1$, let $q_0 = 3^s$, $q = 3q_0^2 = 3^{2s+1}$ and $m = q - 3q_0 + 1$.
The Skabelund curve $\tilde{\mathcal{R}}_q$ is given by $\tilde{\mathcal{R}}_q=\mathbb{F}_{q^6}(x,y,z,t)$, where

$$
\begin{cases}
  y^q - y = x^{q_0} (x^q - x) \\
  z^q - z = x^{2q_0} (x^q - x) \\
  t^m = x^q - x.
\end{cases} $$
$\tilde{\mathcal{R}}_q$ is maximal over  $\mathbb{F}_{q^6}$. Its automorphism group is $\Aut(\tilde{\mathcal{R}}_q) = \Ree(q) \times C_m$, see \cite{GMQZ}.
A partial description of Galois subcovers of $\tilde{\mathcal{S}}_q$ and $\tilde{\mathcal{R}}_q$ was given in \cite{GMQZ} where only subgroups of $Aut(\tilde{\mathcal{S}}_q)$ (resp. $Aut(\tilde{\mathcal{R}}_q)$), of type
$K \times H$ with $K \leq Sz(q)$ (resp. $K \leq \Ree(q)$) and $H \leq C_m$ were considered.

In this paper, the complete classification of Galois subcovers of $\tilde{\mathcal{S}}_q$ and $\tilde{\mathcal{R}}_q$ is given.
The corresponding genera are computed for all values of $q$, giving new genera of maximal curves for specific values of $q$ (see Tables \ref{tab:cm_times_cm_Sz}, \ref{tab:cm_times_cm_Ree}, and \ref{tab:Ree3timesCmskew}).
To the best of our knowledge the genera given in these tables are new.
We have checked that these values are not contained in and cannot be obtained using results from \cite{AQ,ABB,BMXY,BM,BM2,CO,CO2,CKT,CKT2,DMZ,DO,FG,FG2,GSX,GV,GV2,GHKT,GKT,GMQZ,GOS,MX,MZ,MZ2,MZq14}.
The paper is organized as follows: in section two, we classify Galois subcovers of $\tilde{\mathcal{S}}_q$, while in section three, we achieve this for $\tilde{\mathcal{R}}_q$.

\section{Galois subcovers of $\tilde{\mathcal{S}}_q$}

In this section, we complete the study of subcovers, initiated in \cite{GMQZ}, of $\tilde{\mathcal{S}}_q$ of the form $\tilde{\mathcal{S}}_q/H$, where $H$ is a subgroup of $\Aut(\tilde{\mathcal{S}}_q)$, computing the genus
of $\tilde{\mathcal{S}}_q/H$.
Throughout the section $s \geq 1$ is a fixed integer, $q_0 := 2^s$, $q := 2q_0^2$ and $m := q - 2q_0 + 1$ unless explicitly stated otherwise.
 It is well known that $\Aut(\tilde{\mathcal{S}}_q)=\Sz(q)$ and it was shown in \cite{GMQZ} that $\Aut(\tilde{\mathcal{S}}_q)=\Sz(q) \times C_m$. Here $\Sz(q)$ is the Suzuki group over $\mathbb{F}_q$ and $C_m =\langle \tau \rangle$, with $\tau(x)=x$, $\tau(y)=y$, and $\tau(t)=\lambda t$, where $\lambda \in \mathbb{F}_{q^4}$
is an element of multiplicative order $m$. We start by proving the following proposition, which is a refinement of Lemma 3.3 from \cite{Sk}.
\begin{proposition}\label{prop:rationalliftSuzuki}
Every automorphism of $\mathcal{S}_q$ can be lifted to an automorphism of $\tilde{\mathcal S}_q$ defined over $\mathbb{F}_q$ in a unique way. The resulting collection of automorphisms forms a group isomorphic to $\Sz(q).$
\end{proposition}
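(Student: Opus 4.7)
The plan is to combine Skabelund's Lemma 3.3 (which supplies \emph{some} lift of each $\sigma \in \Sz(q)$ to $\tilde{\mathcal{S}}_q$ over the full field $\mathbb{F}_{q^4}$) with a Galois-descent argument driven by the number-theoretic identity $\gcd(m,q-1)=1$. I would first verify this identity: since $q=2q_0^2$ and $m=2q_0^2-2q_0+1$, the Euclidean algorithm reduces $\gcd(m,q-1)$ to $\gcd(m,q_0-1)$, and $m\equiv 2-2+1 \equiv 1 \pmod{q_0-1}$. Consequently the only $m$-th root of unity in $\mathbb{F}_q^*$ is $1$, while the generator $\lambda\in\mathbb{F}_{q^4}^*$ of order $m$ of $C_m$ satisfies $\lambda^q \neq \lambda$.

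Fix $\sigma \in \Sz(q)$. Skabelund's lemma produces some lift $\tilde\sigma_0 \in \Aut(\tilde{\mathcal{S}}_q)$, and any other lift differs from $\tilde\sigma_0$ by an element of $\mathrm{Gal}(\tilde{\mathcal{S}}_q/\mathcal{S}_q)=\langle\tau\rangle\cong C_m$ with $\tau(t)=\lambda t$; hence the set of lifts of $\sigma$ is a $C_m$-torsor. Let $\phi$ denote the Frobenius generator of $\mathrm{Gal}(\mathbb{F}_{q^4}/\mathbb{F}_q)$. Since $\sigma$ is $\mathbb{F}_q$-rational, $\phi(\tilde\sigma_0)$ is again a lift of $\sigma$, so $\phi(\tilde\sigma_0) = \tau^{k}\,\tilde\sigma_0$ for a well-defined $k=k(\sigma)\in\Z/m\Z$. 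Because $\phi$ acts on $\langle\tau\rangle$ by $\tau\mapsto\tau^q$, replacing $\tilde\sigma_0$ by $\tau^{j}\tilde\sigma_0$ changes $k$ to $(q-1)j+k \pmod m$. By $\gcd(q-1,m)=1$, the equation $(q-1)j\equiv -k \pmod m$ has a unique solution, yielding the desired unique $\mathbb{F}_q$-rational lift $\tilde\sigma$ of $\sigma$.

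The assignment $\sigma \mapsto \tilde\sigma$ is then a group homomorphism: given $\sigma_1,\sigma_2\in\Sz(q)$, the composition $\tilde\sigma_1\tilde\sigma_2$ is $\mathbb{F}_q$-rational and lifts $\sigma_1\sigma_2$, so by uniqueness it equals $\widetilde{\sigma_1\sigma_2}$. Injectivity is immediate since restriction to $\mathcal{S}_q$ recovers $\sigma$, so the image is a subgroup of $\Aut(\tilde{\mathcal{S}}_q)$ isomorphic to $\Sz(q)$.

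The main obstacle is the descent step, not existence or the group-theoretic bookkeeping: one has to translate the statement ``$\tilde\sigma_0$ is defined over $\mathbb{F}_{q^4}$ but possibly not over $\mathbb{F}_q$'' into a concrete $1$-cocycle in $C_m$ and trivialize it using $\gcd(m,q-1)=1$. An equivalent and more hands-on route is to write $\sigma(x^q+x)/(x^q+x) = c_\sigma\, h_\sigma^{m}$ with $h_\sigma \in \mathbb{F}_q(\mathcal{S}_q)$ and $c_\sigma\in\mathbb{F}_q^*$ (using that $\tfrac{1}{m}\,\mathrm{div}(x^q+x)$ is $\mathbb{F}_q$-rational and that Skabelund's lemma forces $\sigma$ to shift it by a principal divisor), then extract an $m$-th root $d_\sigma$ of $c_\sigma$ in $\mathbb{F}_q^*$ and set $\tilde\sigma(t)=d_\sigma h_\sigma \cdot t$. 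Either way the essential input is $\gcd(m,q-1)=1$.
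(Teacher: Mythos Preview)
Your argument is correct and complete; the key ingredient $\gcd(m,q-1)=1$ is the same as in the paper, but you deploy it differently. The paper proceeds by explicit generators: it observes that Skabelund's lift of the involution $\phi$ (sending $t$ to $t/w$) is already $\mathbb{F}_q$-rational, and that for each Borel-type automorphism $\psi_{abc}$ the lift $t\mapsto \alpha t$ with $\alpha^m=a$ can be made $\mathbb{F}_q$-rational by choosing the unique $\alpha\in\mathbb{F}_q^*$ solving $\alpha^m=a$ (which exists precisely because $\gcd(m,q-1)=1$). Uniqueness is then read off from $\lambda^k\notin\mathbb{F}_q$ for $0<k<m$. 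Your route is instead a uniform Galois-descent/cocycle argument: you never touch generators, but trivialize the $1$-cocycle $k(\sigma)\in\Z/m\Z$ using invertibility of $q-1$ modulo $m$. Your approach is cleaner and transports verbatim to the Ree case; the paper's approach has the advantage of producing explicit formulas $\tilde\sigma(t)=f(x,y)\,t$ with $f$ over $\mathbb{F}_q$, which are used later (e.g.\ in the proof of Proposition~\ref{prop:fourjs}). Your final ``hands-on'' paragraph is in fact very close to what the paper does, just phrased without singling out the generators $\psi_{abc}$ and $\phi$.
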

\begin{proof}
The proof from \cite{Sk} mentions that automorphisms $\psi_{abc}\in \Aut(\mathcal S_q)$ defined by $\psi_{abc}(x)=ax+b$ and $\psi_{abc}(y)=a^{q_0+1}y+b^{q_0}x+c$, with $a\in\mathbb{F}_q^*$ and $b,c \in \mathbb{F}_q$, can be lifted to
automorphisms $\psi$ of $\tilde{\mathcal S}_q$ by defining $\psi(t)=\alpha t$, where $\alpha^m=a$ for a suitably chosen $\alpha \in \mathbb{F}_{q^4}$, and that the automorphism $\phi \in \Aut(\mathcal S_q)$ defined by
$\phi(x)=z/w$ and $\phi(y)=y/w$ can be lifted to an automorphism of $\tilde{\mathcal S}_q$ by defining $\phi(t)=t/w$.
Here $z:=y^{2q_0}+x^{2q_0+1}$ and $w:=xy^{2q_0}+z^{2q_0}$.
Since the $\psi_{abc}$ and $\phi$ generate $\Aut(\mathcal S_q)$, it is then concluded in \cite{Sk} that any automorphism in $\Aut(\mathcal S_q)$ can be lifted to one of
$\Aut(\mathcal S_q)$ when the field of definition is extended to $\mathbb{F}_{q^4}$.
However, it is clear that the lift of $\phi$ is actually defined over $\mathbb{F}_q$. Moreover, since $\gcd(m,q-1)=1$, the $m$-th power map
acts as a permutation on $\mathbb{F}_q^*$, implying that for each $a \in \mathbb{F}_q^*$ the equation $\alpha^m=a$ has a unique solution in $\alpha \in \mathbb{F}_q^*$.
Choosing this $\alpha$ to lift $\psi_{abc}$, we obtain a lift of
$\psi_{abc}$ defined over $\mathbb{F}_q$.
Using as in \cite{Sk} that the $\psi_{abc}$ and $\phi$ generate $\Aut(\mathcal S_q)$, it follows that any automorphism $\sigma \in \mathcal{S}_q$ can be lifted to an automorphism $\tilde{\sigma} \in \tilde{\mathcal S}_q$
defined over $\mathbb{F}_q$. Moreover, it follows from the above procedure that $\tilde{\sigma}(t)=f(x,y) t$ for some function $f(x,y)$ on $\mathcal S_q$ defined over $\mathbb{F}_q.$
All lifts of $\sigma$ are of the form $\tilde{\sigma} \tau^{k}$ for $k=0,\dots,m-1$, then $\tilde{\sigma} \tau^{k}=f(x,y) \lambda^k t$, which is defined over $\mathbb{F}_q$ is and only if $\lambda^k=\id_{C_m}.$
This proves the first part of the proposition as well as the fact that $\Aut(\tilde{\mathcal{S}}_q)$ contains exactly $|\Sz(q)|$ many elements defined over $\mathbb{F}_q$.
The natural map from these elements to $\Aut(\mathcal S_q)$ ``forgetting'' the action on $t$, is a bijective group homomorphism, whence the second part of the proposition follows.
\end{proof}

We will call the lift of $\sigma \in \Aut(\mathcal{S}_q)$ described in Proposition \ref{prop:rationalliftSuzuki} the $\mathbb{F}_q$-rational lift of $\sigma$.
With slight abuse of notation, we denote this lift again by $\sigma$ and think of $\Sz(q)$ as a subset of $\Aut(\tilde{\mathcal{S}}_q)$.
The fact already proved in \cite{GMQZ} that $\Aut(\tilde{\mathcal{S}}_q)=\Sz(q) \times C_m$ now also follows quite easily: indeed have constructed a natural copy of $\Sz(q)$ inside $\Aut(\tilde{\mathcal{S}}_q)$,
$\tau$ commutes with any element in $\Sz(q)$, and any element in $\Aut(\tilde{\mathcal{S}}_q)$ is of the form $\sigma \tau^k$ for $\sigma \in \Sz(q)$.

Any non-trivial subgroup of $\Aut(\tilde{\mathcal{S}}_q)$ is contained in one of its maximal subgroups, it is sufficient to consider subgroups of maximal subgroups.
Since $\Aut(\tilde{\mathcal{S}}_q)=\Sz(q) \times C_m$, the following simple lemma will be very convenient.

\begin{lemma}\label{lem:max}
Let $H \subseteq G$ be a subgroup of a direct product of groups $G=G_1 \times G_2$ and for $i=1,2$, define $\pi_i: H \to G_i$ as $\pi_i(g_1,g_2)=g_i$.
If $H$ is a maximal subgroup of $G$, then either $H=H_1 \times G_2$, with $H_1$ a maximal subgroup of $G_1$, or $H=G_1 \times H_2$, with $H_2$ a maximal subgroup of $G_2$, or $|H|$ is a multiple of $\lcm(|G_1|,|G_2|)$
and for $i=1,2$, $|\ker(\pi_i)|$ is a multiple of $|G_{3-i}|/\gcd(|G_1|,|G_2|).$
\end{lemma}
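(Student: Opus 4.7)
The plan is to exploit the elementary inclusion $H\subseteq K_1\times K_2\subseteq G$, where $K_i := \pi_i(H)\leq G_i$. Maximality of $H$ collapses this chain to a dichotomy: either $H = K_1\times K_2$ (so $H$ is itself a direct product of subgroups of $G_1$ and $G_2$) or $K_1\times K_2 = G$ (so both projections are surjective). The three alternatives of the lemma will correspond respectively to the product case with $K_2=G_2$, the product case with $K_1=G_1$, and the surjective case.

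In the product case $H=K_1\times K_2$, if both factors were proper then the chain $H=K_1\times K_2\subsetneq G_1\times K_2\subsetneq G$ would contradict maximality. So one factor must be full; say $K_2=G_2$, giving $H=K_1\times G_2$. Maximality then forces $K_1$ to be maximal in $G_1$, since any proper intermediate $K_1\subsetneq L_1\subsetneq G_1$ yields $H\subsetneq L_1\times G_2\subsetneq G$. This establishes alternative (a), and the symmetric subcase gives (b).

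In the surjective case I would run the standard Goursat construction by hand, to avoid invoking the general lemma. Let $N_i\leq G_i$ correspond to $\ker(\pi_{3-i})\leq H$ under the identification $G_i\cong G_i\times\{e_{3-i}\}$, and define $\phi(g_1N_1):=g_2N_2$, where $g_2\in G_2$ is any element with $(g_1,g_2)\in H$. Surjectivity of $\pi_1$ provides a preimage, and two preimages differ by an element of $\ker(\pi_1)\cong N_2$, so $\phi$ descends to a well-defined isomorphism $G_1/N_1\to G_2/N_2$. Writing $d$ for the common order of these quotients, the first isomorphism theorem yields $|H|=|G_1|\cdot|N_2|=|G_1||G_2|/d$, while $d\mid|G_1|$ and $d\mid|G_2|$ force $d\mid\gcd(|G_1|,|G_2|)$. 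The divisibilities $\lcm(|G_1|,|G_2|)\mid|H|$ and $|G_{3-i}|/\gcd(|G_1|,|G_2|)\mid|\ker(\pi_i)|$ then follow by straightforward arithmetic. The proof is short and the only mildly delicate point is keeping the identifications between $\ker(\pi_i)$ and $N_{3-i}$ consistent; maximality itself enters only through the initial dichotomy and, within the product case, through forcing $K_1$ to be maximal in $G_1$.
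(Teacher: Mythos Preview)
Your proof is correct and follows essentially the same route as the paper's: both start from the inclusion $H\subseteq \pi_1(H)\times\pi_2(H)$, use maximality to force the dichotomy, and treat the product case identically. In the surjective case the paper is slightly more economical---rather than building the Goursat isomorphism $G_1/N_1\cong G_2/N_2$, it simply applies the first isomorphism theorem to each $\pi_i$ to obtain $|H|=|\ker(\pi_i)|\cdot|G_i|$ for $i=1,2$, from which the stated divisibilities follow at once.
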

\begin{proof}
Let $H \subset G_1 \times G_2$ be a maximal subgroup and for $i=1,2$ write $H_i=\im(\pi_i).$ If is clear that $H \subseteq H_1 \times H_2.$
Since $H$ is maximal, either $H=H_1 \times H_2$ or $H_1 \times H_2=G_1 \times G_2$. In the former case, we deduce from the maximality of $H$ that either $H_1=G_1$ and $H_2$ is a maximal subgroup of $G_2$,
or $H_1$ is a maximal subgroup of $G_1$ and $H_2=G_2.$ In the latter case, we see that $G_i=H_i\cong H/\ker(\pi_i),$ where we used the isomorphism theorem.
This implies that $H$ is both a multiple of $|G_1|$ and of $|G_2|$ and hence of the least common multiple of the two. Since more specifically $|H|=|\ker(\pi_i)|\cdot |G_i|,$ we also see that $|\ker(\pi_i)|$
is a multiple of $\lcm(|G_1|,|G_2|)/|G_i|=|G_{3-i}|/\gcd(|G_1|,|G_2|)$ for $i=1,2$.
\end{proof}

It is trivial that the maximal subgroups of $C_m=\langle \tau \rangle$ are all of the form $\langle \tau^p \rangle$, where $p$ is a prime dividing $m$. The maximal subgroups of the Suzuki group are well known and classified
up to conjugation in the following theorem. See \cite{L} Theorem 4.12 or \cite{HHP}, Theorem 3.1 for details.

\begin{theorem}
  \label{theorem:classification_subgroups_Sz}
  Up to conjugation, the Suzuki group $\Sz(q)$ has the following maximal subgroups.
  \begin{enumerate}
  \item The Frobenius group $F$ of order $q^2(q-1)$.
  \item The dihedral group $B_0$ of order $2(q-1)$.
  \item The normalizer $N_-$ of a cyclic Singer group $\Sigma_-$ with $|\Sigma_-| = q-2q_0+1$ and $|N_-| = 4 \cdot |\Sigma_-|$.
  \item The normalizer $N_+$ of a cyclic Singer group $\Sigma_+$ with $|\Sigma_+| = q+2q_0+1$ and $|N_+| = 4 \cdot |\Sigma_+|$.
  \item The Suzuki groups $\Sz(\hat{q})$ for $q = \hat{q}^h$, with $1< h < 2s+1$ and $h$ a prime.
  \end{enumerate}
Further, any subgroup of $\Sz(q)$ is either isomorphic to $\Sz(\hat{q})$ for $q=\hat{q}^k$ and $1 \le k < 2s+1$ or conjugated to a subgroup of one of $F, B_0, N_-, or N_+$.
\end{theorem}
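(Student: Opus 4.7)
Since this is the classical classification of maximal subgroups of $\Sz(q)$, the plan is to follow Suzuki's original combinatorial approach via the faithful 2-transitive action of $\Sz(q)$ on the Suzuki--Tits ovoid $\mathcal{O}$, a set of $q^2+1$ points in $\mathbb{P}^3(\mathbb{F}_q)$. The arithmetic starting point is that $|\Sz(q)| = q^2(q-1)(q^2+1)$ factors with pairwise coprime Hall divisors $q-1$, $q-2q_0+1$, $q+2q_0+1$, using the identity $q^2+1 = (q-2q_0+1)(q+2q_0+1)$ together with elementary gcd computations. This coprimality governs the entire classification.

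I would then identify the five families in turn. Cases (1) and (2) arise as stabilizers in the ovoid action: the Frobenius group $F$ is the stabilizer of a single point of $\mathcal{O}$, realized as the semidirect product of a Sylow 2-subgroup of order $q^2$ with the cyclic two-point stabilizer of order $q-1$; the dihedral group $B_0$ is the normalizer of this two-point stabilizer. Cases (3) and (4) arise from the cyclic Hall (``Singer'') subgroups $\Sigma_\pm$ of orders $q\pm 2q_0+1$, which act without fixed points on $\mathcal{O}$, together with their normalizers $N_\pm$ of index 4. Case (5) consists of subfield groups $\Sz(\hat{q})$, which embed naturally whenever $\hat{q}=2^{2k+1}$ with $2k+1\mid 2s+1$, and which are maximal precisely when the quotient $(2s+1)/(2k+1)$ is prime.

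The hard part --- and the main obstacle --- is completeness of the list. Given a proper subgroup $M\le \Sz(q)$, the outline is: if $M$ fixes a point of $\mathcal{O}$, then $M\subseteq F$; otherwise $|M|$ must contain a factor coprime to $q(q-1)$, which forces $M$ to contain a non-trivial element of some $\Sigma_\pm$; such an $M$ is then pinned down inside $N_\pm$ unless it additionally meets the unipotent radical non-trivially, in which case a Sylow-theoretic comparison with the subfield embedding forces $M\cong \Sz(\hat{q})$. Filling in these steps rigorously requires either Suzuki's original character-table analysis or the BN-pair structure of $\Sz(q)$; rather than reproducing this subtle analysis, I would invoke the definitive treatments in \cite{L} and \cite{HHP} as the last word of the proof.
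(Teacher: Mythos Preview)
The paper does not prove this theorem at all: it is stated as a known result with the citation ``See \cite{L} Theorem 4.12 or \cite{HHP}, Theorem 3.1 for details.'' Your proposal ultimately defers to exactly the same two references, so there is no discrepancy; the sketch you give of Suzuki's ovoid approach is additional expository content that the paper simply omits.
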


Lemma \ref{lem:max} and Theorem \ref{theorem:classification_subgroups_Sz} allow us to describe all maximal subgroups of $\Aut(\tilde{\mathcal{S}}_q)=\Sz(q)\times C_m$. Actually, we obtain the following slightly stronger result on subgroups of $\Sz(q) \times C_m$:
\begin{corollary}\label{cor:subgroupsautskabelund}
Any subgroup $H \subset \Sz(q)\times C_m$ is either of the form $\Sz(q) \times C_n$, with $n|m$ and $C_n \subseteq C_m$ the unique subgroup of order $n$, or contained in $M \times C_m$ with $M$ a maximal subgroup of $\Sz(q).$
\end{corollary}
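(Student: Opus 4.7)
The plan is to analyze an arbitrary subgroup $H \subseteq \Sz(q) \times C_m$ by looking at the projection $\pi_1: H \to \Sz(q)$, in the same spirit as Lemma \ref{lem:max} but now without assuming maximality of $H$. I would split the argument into two cases depending on whether $\pi_1(H)$ is all of $\Sz(q)$ or not.

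If $\pi_1(H)$ is a proper subgroup of $\Sz(q)$, then it is contained in some maximal subgroup $M$ of $\Sz(q)$ (which exists by Theorem \ref{theorem:classification_subgroups_Sz}). Since in any case $H \subseteq \pi_1(H) \times C_m$, this immediately gives $H \subseteq M \times C_m$, which is the second alternative of the corollary.

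The substantive case is $\pi_1(H) = \Sz(q)$. Set $K := \ker(\pi_1) = H \cap (\{\id\} \times C_m)$. As $C_m$ is cyclic, $K$ is of the form $\{\id\} \times C_n$ for a unique $n \mid m$. For each $\sigma \in \Sz(q)$ pick a preimage $(\sigma, \tau^{f(\sigma)}) \in H$; any two such choices differ by an element of $K$, so the assignment $\sigma \mapsto \tau^{f(\sigma)} C_n$ gives a well-defined group homomorphism $\Sz(q) \to C_m/C_n$ into an abelian group. Since $s \geq 1$ we have $q \geq 8$, so $\Sz(q)$ is simple and non-abelian, hence equal to its own commutator subgroup; any homomorphism to an abelian group is therefore trivial. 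Consequently each $\sigma$ admits a lift in $H$ with $f(\sigma) \in C_n$, so $(\sigma,\id) \in H$, and combined with $K \subseteq H$ this yields $\Sz(q) \times C_n \subseteq H$. For the reverse inclusion, any $(\sigma,\tau^k) \in H$ multiplied by $(\sigma^{-1},\id) \in H$ forces $(\id,\tau^k) \in K$, so $\tau^k \in C_n$, giving $H = \Sz(q) \times C_n$ as claimed.

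The only non-routine input is the simplicity of $\Sz(q)$ for $q \geq 8$, which is a classical fact already underpinning Theorem \ref{theorem:classification_subgroups_Sz}. Apart from invoking this, the argument is elementary group theory on direct products, and I do not expect any further obstacle.
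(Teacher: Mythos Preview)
Your proof is correct and takes a genuinely different route from the paper's. The paper argues iteratively: it applies Lemma~\ref{lem:max} to place $H$ inside a maximal subgroup of $\Sz(q)\times C_m$, uses the classification in Theorem~\ref{theorem:classification_subgroups_Sz} to verify that the only subgroup of $\Sz(q)$ whose order is a multiple of $|\Sz(q)|/m$ is $\Sz(q)$ itself, and then repeats the reduction along the chain $\Sz(q)\times C_{m/p}\supseteq \Sz(q)\times C_{m/(pp')}\supseteq\cdots$ until the second factor is trivial. You instead split once on whether $\pi_1(H)=\Sz(q)$ and, in the surjective case, use perfectness of $\Sz(q)$ to kill the induced homomorphism $\Sz(q)\to C_m/C_n$ in one stroke. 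Your argument is shorter and more conceptual, replacing the numerical index check and the iteration by a single appeal to $[\Sz(q),\Sz(q)]=\Sz(q)$; the paper's approach has the advantage of staying entirely within the subgroup-lattice framework already set up and of providing a template reused verbatim for the Ree case in Corollary~\ref{cor:subgroupsautskabelund2}.
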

\begin{proof}
Let $H$ be a subgroup of $\Sz(q)\times C_m$. According to Lemma \ref{lem:max}, we can conclude that one of the following three cases will hold: either it is contained in $M \times C_m$ with $M$ a maximal subgroup of $\Sz(q)$, or contained in $\Sz(q) \times C_{m/p}$ with $p$ a prime dividing $m$ and $C_{m/p}$ the unique subgroup of $C_m$ of order $m/p$, or contained in a maximal subgroup $K$ for which $|\ker(\pi_2)|$ is a multiple of $|\Sz(q)|/m$.

In the first case, there is nothing left to prove. Now consider the third case. Since $\ker(\pi_2)=K \cap (\Sz(q) \times \{\id_{C_m}\})$, it can be identified with a subgroup of $\Sz(q).$ However, Theorem \ref{theorem:classification_subgroups_Sz} implies that the only subgroup of $\Sz(q)$ that has cardinality a multiple of $|\Sz(q)|/m=(q+2q_0+1)q^2(q-1)$ is $\Sz(q)$ itself. Hence $\ker(\pi_2)=\Sz(q)\times \{\id_{C_m}\} \subseteq K,$ which implies that $K=\Sz(q) \times C_{m/p}$ for some $p$ dividing $m$. Hence we arrive at the same conclusion as in the second case and should consider the case that $H$ is a subgroup of $\Sz(q) \times C_{m/p}.$ If $H=\Sz(q) \times C_{m/p}$, we are done. Otherwise a similar application of Lemma \ref{lem:max} and Theorem \ref{theorem:classification_subgroups_Sz} shows that in this case either $H$ is contained in $M \times C_{m/p}$ for some maximal subgroup $M$ of $\Sz(q)$, or that $H$ is a subgroup of $\Sz(q) \times C_{m/(pp')}$ for some prime $p'$ dividing $m/p'$. In the first case or in case that $H=\Sz(q) \times C_{m/(pp')},$ we are done, otherwise we continue dividing prime factors of $m$ out till we arrive at the case that $H$ is contained in $\Sz(q) \times \{\id_{C_m}\}$. At this point, we see that either $H=\Sz(q)\times \{\id_{C_m}\}$, or contained in $M \times \{\id_{C_m}\}$ for some maximal subgroup $M$ of $\Sz(q)$. This proves the corollary.
\end{proof}

In \cite{GMQZ}, the genus of the quotient curve $\tilde{\mathcal{S}}_q/H$ is computed when $H$ is one of following subgroups of $\Aut(\tilde{\mathcal{S}}_q)$:
\begin{itemize}
\item $F \times C_m$ or one of its subgroups;
\item $N_+ \times C_m$ or one of its subgroups;
\item $N_- \times C_m$ or one of its subgroups of the form $K \times C_n$ with $K$ subgroup of $N_-$ and $n$ dividing $m$;
\item $\Sz(\hat{q}) \times C_n$ for suitable $\hat{q}$ and for $n$ dividing $m$.
\end{itemize}

Corollary \ref{cor:subgroupsautskabelund} implies that the only cases where the genus of $\tilde{\mathcal{S}}_q/H$ has not been computed yet are if $H$ is one of the missing subgroups of $N_- \times C_m$ or a subgroup of
$B_0 \times C_m$. To complete these cases, we use the same approach as in \cite{GMQZ}. For a subgroup $H$ of $\Aut(\tilde{\mathcal{S}}_q)$, let $g_H$ be the genus of the quotient curve $\tilde{\mathcal{S}}_q/H$.
By the Riemann--Hurwitz formula (see \cite{Sti}, Theorem 3.4.13) applied to the cover $\tilde{\mathcal{S}}_q \to \tilde{\mathcal{S}}_q/H$, we have
\begin{equation*}\label{eq:Riemann--Hurwitz}
(q^2+1)(q-2) = |H| \cdot (2g_H-2) + \Delta_H,
\end{equation*}
where $|H|$ is the order of $H$ and $\Delta_H$ is the degree of the different divisor. By the Hilbert's different formula, $\Delta_H$ can be expressed as
$$ \Delta_H = \sum_{\substack{\omega \in H \\ \omega \neq \id}} \iota(\omega), $$
where
$$ \iota(\omega) = \sum_{\substack{P \in \tilde{\mathcal{S}}_q \\ \omega(P) = P}} | \{ i \in \Z_{\ge 0} \,:\, \omega \in H_P^{(i)} \} | $$
and $H_P^{(i)}$ is the $i$-th ramification group of the Galois cover $\tilde{\mathcal{S}}_q \to \tilde{\mathcal{S}}_q/H$ at $P$. See \cite{Sti}, Definition 3.8.4 and Theorem 3.8.7 for details. For each $\omega \in \Aut(\tilde{\mathcal{S}}_q)$, the quantity $\iota(\omega)$ is in principle computed in \cite{GMQZ}, Theorem 26; however, this theorem contains a mistake and for the convenience of the reader we give the correct formulation as well as a proof of the corrected part. For a group element $g \in G$, we denote by $\ord(g)$ the order of $g$.
%
\begin{theorem}
  \label{theorem_26}
Let $\{ \id_{\Sz(q)} \} \times C_m = \langle \tau \rangle$. Then $\iota(\tau^k) = q^2 + 1$ for all $k = 1, \dots, m-1$.
Further, let $\sigma \in \Sz(q) \times \{ \id_{C_m} \}$, $\sigma \neq \id$. Then exactly one of the following cases occurs:

\bigskip
\noindent
1. $\ord(\sigma) = 2$, $\iota(\sigma) = m(2q_0+1)+1$, and $\iota(\sigma \tau^k) = 1$ for all $k = 1, \dots, m-1$;

\bigskip
\noindent
2. $\ord(\sigma) = 4$, $\iota(\sigma) = m+1$, and $\iota(\sigma \tau^k) = 1$ for all $k = 1, \dots, m-1$;

\bigskip
\noindent
3. $\ord(\sigma) \mid (q-1)$, $\iota(\sigma \tau^k) = 2$ for all $k = 0, \dots, m-1$;

\bigskip
\noindent
4.  $\ord(\sigma) \mid (q + 2q_0 + 1)$, $\iota(\sigma \tau^k) = 0$ for all $k = 0, \dots, m-1$;

\bigskip
\noindent
5. $\ord(\sigma) \mid (q - 2q_0 + 1)$, $\iota(\sigma)=0$, $\iota(\sigma \tau^j) = m$ for exactly four distinct $j \in \{ 1, \dots, m-1 \}$, and $\iota(\sigma \tau^j) = 0$ for all other $j$ between $1$ and $m-1$.
%
\end{theorem}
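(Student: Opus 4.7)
The plan is to compute $\iota(\omega) = \sum_{\omega(\tilde P) = \tilde P} v_{\tilde P}(\omega(\tilde\pi)-\tilde\pi)$ for each nontrivial $\omega$ by first locating its fixed places on $\tilde{\mathcal S}_q$ and then evaluating the local valuation, which equals the cardinality in the stated definition of $\iota$. Because $\tau$ acts trivially on $\mathcal S_q$, any fixed place of $\sigma\tau^k$ on $\tilde{\mathcal S}_q$ projects to a fixed place of $\sigma$ on $\mathcal S_q$, so I would organize the analysis by the conjugacy class of $\sigma$ in $\Sz(q)$ and use conjugation-invariance of $\iota$ to pick convenient representatives.

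For $\omega=\tau^k$ with $k\not\equiv 0\pmod m$, the fixed places are precisely the $q^2+1$ ramified places of the tame degree-$m$ Kummer extension $\tilde{\mathcal S}_q/\mathcal S_q$, namely those above the $\mathbb F_q$-rational places of $\mathcal S_q$; tameness places $\tau^k$ in the $0$-th ramification group only, so the contribution is $1$ per place. Cases 3 and 4 follow the template of \cite{GMQZ}: the Singer cycle $\Sigma_+$ is fixed-point-free on $\mathcal S_q$, forcing $\iota=0$; while a representative $\psi_{a,0,0}$ fixes only $P_0,P_\infty$, both totally ramified, and $\gcd(2,m(q-1))=1$ forces every $\sigma\tau^k$ to have odd order and contribute $1$ at each fixed place, giving $\iota=2$.

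Cases 1 and 2 reduce to a local computation at $\tilde P_\infty$. Conjugating $\sigma$ into the Sylow-$2$ subgroup of $F$ so that $\sigma=\psi_{1,b,c}$, Proposition \ref{prop:rationalliftSuzuki} forces the $\mathbb F_q$-rational lift to satisfy $\sigma(t)=t$ (since the parameter $a=1$ has unique $m$-th root $1$ in $\mathbb F_q^*$). Choosing integers $a,b$ with $am-bq^2=1$, which forces $a$ odd (as $m$ is odd and $q^2$ is even), the element $\tilde\pi:=\pi^a t^b$ is a uniformizer at $\tilde P_\infty$ (for a uniformizer $\pi$ at $P_\infty$), since $v_{\tilde P_\infty}(\pi)=m$ and $v_{\tilde P_\infty}(t)=-q^2$. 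A direct calculation gives
\[
\frac{\sigma\tau^k(\tilde\pi)}{\tilde\pi} \;=\; \lambda^{bk}\left(\frac{\sigma(\pi)}{\pi}\right)^{\!a}.
\]
The classical lower-numbering filtration at $P_\infty$ on $\mathcal S_q$ ($G^{(1)}=P$, $G^{(i)}=Z(P)$ for $2\le i\le 2q_0+1$, and trivial beyond) gives $v_{P_\infty}(\sigma(\pi)/\pi-1)=2q_0+1$ for involutions and $=1$ for order-$4$ elements. In characteristic two, the leading term of $(1+v)^a$ is $v$ whenever $a$ is odd, so the $a$-th power preserves this valuation; for $k=0$ the ramification index $m$ rescales by $m$, yielding $\iota(\sigma)=1+m(2q_0+1)$ and $1+m$ respectively. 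For $k\neq 0$, $\gcd(b,m)=1$ (from $bq^2\equiv-1\pmod m$) makes $\lambda^{bk}$ a nontrivial $m$-th root of unity, and the leading term becomes the nonzero constant $\lambda^{bk}-1$, giving $\iota(\sigma\tau^k)=1$.

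Case 5 is the most delicate step, and the source of the correction to \cite{GMQZ}. A representative $\sigma\in\Sigma_-$ has no $\mathbb F_q$-rational fixed places but has exactly four geometric fixed places $P_1,\ldots,P_4$ on $\mathcal S_q$, transitively permuted by the Weyl group $N_-/\Sigma_-\cong C_4$. Each $P_i$ is unramified in $\tilde{\mathcal S}_q/\mathcal S_q$ with $m$ preimages on which $\sigma$ acts as $t\mapsto\alpha_i t$, where $\alpha_i:=f_\sigma(P_i)\in\mu_m$ (as $f_\sigma^m=\sigma(x^q+x)/(x^q+x)$ evaluates to $1$ at any fixed place outside the support of $x^q+x$). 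Consequently $\sigma\tau^j$ fixes all $m$ preimages of $P_i$ iff $\lambda^j=\alpha_i^{-1}$, determining a unique $j\pmod m$ whose contribution is $m$ by tameness. The hardest step, which I expect to be the main obstacle, is showing that the four $\alpha_i$ are pairwise distinct and all different from $1$, equivalently that the character $\chi\colon\Sigma_-\to\mu_m$, $\sigma\mapsto f_\sigma(P_1)$, is faithful. Distinctness given faithfulness follows from the fact that conjugation by $N_-/\Sigma_-$ acts on $\Sigma_-$ by $\sigma\mapsto\sigma^q$, with $q$ having order exactly $4$ modulo every divisor of $m$ bigger than $1$ (a direct consequence of $q^2\equiv-1\pmod m$ together with the elementary identities $\gcd(m,q^i-1)=1$ for $i=1,2,3$), so that the four Weyl conjugates $\chi,\chi^q,\chi^{q^2},\chi^{q^3}$ restrict to distinct characters on every nontrivial subgroup of $\Sigma_-$. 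Faithfulness of $\chi$ itself should follow from an explicit description of $f_\sigma$ via its divisor $(f_\sigma)=(q+2q_0+1)(P_\infty-\sigma^{-1}(P_\infty))$, ruling out that $\sigma$ fixes the entire fiber above some $P_i$; this is the technical heart of Case 5.
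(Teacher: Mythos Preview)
Your outline for Cases 1--4 is fine and essentially matches what is already in \cite{GMQZ}; the paper itself does not reprove these and simply cites Theorem 26 of \cite{GMQZ}. The substantive content of the paper's proof is Case 5, and there your proposal has a genuine gap and a structural difference worth noting.

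\medskip
\textbf{The gap: faithfulness of $\chi$.} You correctly isolate the heart of Case 5 as showing that the four values $\alpha_i=f_\sigma(P_i)$ are pairwise distinct and all different from $1$, but your justification that ``faithfulness of $\chi$ should follow from an explicit description of $f_\sigma$ via its divisor $(f_\sigma)=(q+2q_0+1)(P_\infty-\sigma^{-1}(P_\infty))$'' does not work as stated. That divisor only tells you where $f_\sigma$ has zeros and poles; since the $P_i$ are neither, it says nothing about whether $f_\sigma(P_i)=1$. The paper closes this gap by a concrete computation: writing $\sigma(t)=\gamma t/\tilde w$ for an explicit function $\tilde w$ on $\mathcal S_q$ (obtained from the tangent-line function at $\sigma(P_\infty)$), it shows
\[
\tilde w^q+\tilde w=(x^q+x)\bigl[\alpha_1^{q_0}(x+\alpha_1)+\beta_1+y\bigr]^{2q_0},
\]
and then checks that every zero of either factor is $\mathbb F_q$-rational. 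Since the fixed points $P_i$ are \emph{not} $\mathbb F_q$-rational, $\tilde w(a,b)\notin\mathbb F_q$, and this is exactly what is needed. Your divisor heuristic does not supply this.

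\medskip
\textbf{The structural difference: Frobenius versus Weyl group.} You relate the four $\alpha_i$ through the Weyl action $\sigma\mapsto\sigma^q$ of $N_-/\Sigma_-$, claiming that the four characters $\chi,\chi^q,\chi^{q^2},\chi^{q^3}$ give the four values. To make this precise you would need to verify a cocycle identity of the form $f_\sigma(\zeta(P_1))=f_{\sigma^q}(P_1)$ (up to harmless factors), which you have not done. The paper bypasses this entirely: it uses Proposition \ref{prop:rationalliftSuzuki} to take the $\mathbb F_q$-rational lift of $\sigma$, so that $\gamma\in\mathbb F_q^*$ and $f_\sigma$ is defined over $\mathbb F_q$. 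Since the four fixed points are a single Frobenius orbit $(a^{q^d},b^{q^d})$, one gets directly $\tilde w(a^{q^d},b^{q^d})=\tilde w(a,b)^{q^d}$, whence the four special $j$-values are distinct iff $\tilde w(a,b)\notin\mathbb F_{q^i}$ for $i=1,2,3$; and because $\gcd(q^i-1,m)=1$ and $\tilde w(a,b)^m\in\mathbb F_q^*$, this reduces to $\tilde w(a,b)\notin\mathbb F_q$, which is exactly the computation above. The Frobenius route is both shorter and avoids the unproved cocycle step in your Weyl-group argument.
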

\begin{proof}
Only the statements about $\iota(\sigma \tau^j)$ for $j=1,\dots,m-1$ in the fifth item need a proof, the rest of the theorem being identical to \cite{GMQZ}, Theorem 26.

Let $\sigma \in \Sigma_- \setminus \{\id\}$. Then $\sigma$ fixes an $\mathbb{F}_{q^4}$-rational, not $\mathbb{F}_q$-rational, point $P$ of the Suzuki curve $\mathcal{S}_q$.
This point $P$ will have certain affine coordinates $(x(P),y(P))=(a,b)$ and $\sigma$ also fixes all the $q$-Frobenius conjugates of $P$, $(a^{q^j},b^{q^j})$ where $j=1,2,3$.
We have seen that $\Sz(q)$ can be lifted to a subgroup of $\Aut(\tilde{\mathcal S}_q)$. We denote the corresponding lift of $\sigma \in \Sz(q)$ again by $\sigma$ for convenience.

First we calculate the possibilities for $\sigma(t)$, where $t^m=x^q+x$.
The orbit of $m$ points above $(a^{q^j},b^{q^j})$ where $j=0,1,2,3$ in the cover $\tilde{\mathcal{S}}_q \to \mathcal{S}_q$ is
$$O_j:=\{(a^{q^j},b^{q^j},(\lambda^i c)^{q^j}) \mid \lambda^m=1, \ i=0,\ldots,m-1, \ c^m=a^q+a\}.$$
Note that
$$(x^q+x)_{\mathcal{S}_q}=\sum_{\alpha^q+\alpha=\beta^q+\beta=0} P_{(\alpha,\beta)} - q^2 P_\infty.$$
Since $o:=\{P_{(\alpha,\beta)} \mid \alpha^q+\alpha=\beta^q+\beta=0 \} \cup \{P_\infty\}$ is an orbit of $Aut(\mathcal{S}_q)$, the result is
$$(\sigma(x^q+x))_{\mathcal{S}_q}=\sum_{\substack{\alpha^q+\alpha=\beta^q+\beta=0 \\ \alpha \ne \alpha_1, \ \beta \ne \beta_1}} P_{(\alpha,\beta)}+P_\infty - q^2 P_{(\alpha_1,\beta_1)},$$
with $P_{(\alpha_1,\beta_1)}=\sigma(P_\infty).$
Hence
$$\left( \frac{\sigma(x^q+x)}{x^q+x}\right)_{\mathcal{S}_q}=(q^2+1)(P_\infty-P_{(\alpha_1,\beta_1)})=\big( \tilde{w}^{-m}\big)_{\mathcal{S}_q},$$
where
$$\tilde{w}=\alpha_1 (\alpha_1^{2q_0}x+z+\beta_1^{2q_0})+\beta_1^{2q_0}x+w+\beta_1^2+\alpha_1^{2q_0+2},$$
with $z:=y^{2q_0}+x^{2q_0+1}$ and $w:=xy^{2q_0}+z^{2q_0}$,
see equation (6) in \cite{BLM}.
We may conclude that there exists a constant $\delta \in \mathbb{F}^*_{q}$ such that
$$\sigma(t)^m=\sigma(x^q+x)=\delta \frac{x^q+x}{\tilde{w}^m}=\delta \bigg( \frac{t}{\tilde{w}}\bigg)^m,$$
so that $\sigma(t)=\gamma t/\tilde{w},$ for some $\gamma \in \mathbb{F}^*_{q^4}$ such that $\gamma^m=\delta$.

Note that this implies for all $k=0,\ldots,m-1$,
$$\sigma\tau^k(x)=\sigma(x), \ \sigma\tau^k(y)=\sigma(y), \ \sigma\tau^k(t)=\gamma \lambda^k \frac{t}{\tilde{w}},$$
where $\lambda \in \mathbb{F}^*_{q^4}$ is an element of multiplicative order $m$.

Now let $\tilde{P} \in O_1$ be a point lying above $P$ in the cover $\tilde{\mathcal{S}}_q \to \mathcal{S}_q$ and write $(a,b,c):=(x(\tilde{P}),y(\tilde{P}),t(\tilde{P})).$
Suppose that $k$ is chosen such that $\sigma \circ \tau^k$ fixes $\tilde{P}$. Since $C_m$ acts on $O_1$ faithfully in a cyclic way, such $k$ exists and is unique.
Then $c=\gamma \lambda^k c/\tilde{w}(a,b)$ and hence
$$\gamma \lambda^k=\tilde{w}(a,b).$$

Clearly this implies that $\sigma\tau^k$ fixes the orbit $O_0$ point-wise, as the $t$-coordinate of the points in $O_0$ is of type $\lambda^j c$ for $j=0,\ldots,m-1$.
We want to show that no point in $O_i$ is fixed by $\sigma\tau^k$ for $i=1,2,3$.
This is equivalent to showing that $\gamma\lambda^k c^{q^i}/\tilde{w}(a^{q^i},b^{q^i}) \ne c^{q^i}$ as again the $z$-coordinate of all the other points in $O_i$ is a constant multiple of $c^{q^i}$.
Since $\tilde{w}(a^q,b^q)=\tilde{w}(a,b)^q$, the obtained quantity is equal to $c^{q^i}$ if and only if
$$\tilde{w}(a,b)=\gamma \lambda^k=\tilde{w}(a,b)^{q^i},$$
that is, if and only if $\tilde{w}(a,b) \in \mathbb{F}_{q^i}$.

Now note that $\tilde{w}(a,b)^m=(\gamma \lambda^k)^m=\delta$ for some $\delta \in \mathbb{F}_q^*.$ Since for $i=1,2,3$, $\gcd(q^i-1,m)=1$, we see for $i=1,2,3$ that $\tilde{w}(a,b) \in \mathbb{F}_{q^i}$ implies that $\tilde{w}(a,b) \in \mathbb{F}_{q}$.
However, as we will see in a moment, the function $\tilde{w}^q+\tilde{w}$ has only $\mathbb{F}_q$-rational zeros, so this cannot occur. Indeed, a direct computation shows that
\[{\tilde{w}^q+\tilde{w}}=(x^q+x)[\alpha_1^{q_0}(x+\alpha_1)+\beta_1+y]^{2q_0}.\]
Hence any zero of ${\tilde{w}^q+\tilde{w}}$ is a zero of $x^q+x$, which are $\mathbb{F}_q$-rational, or a zero of $\tilde{y}:=\alpha_1^{q_0}(x+\alpha_1)+\beta_1+y$, which describes the tangent line of $\mathcal{S}_q$ at $P_{\alpha_1,\beta_1}.$
Since $\tilde{y}^q+\tilde{y}=(x+\alpha_1)^{q_0}(x^q+x),$ any zero of $\tilde{y}^q+\tilde{y}$, and hence of $\tilde{y}$, is $\mathbb{F}_q$-rational as well.

We may conclude that $\iota(\sigma \tau^k)=m$. Starting with a point in one of the other orbits $O_1$, $O_2$, $O_3$, one can similarly find a unique $k$, a different one for each orbit, such that $\iota(\sigma \tau^k)=m$.
\end{proof}

We will now supplement this theorem with a result, which is very convenient from a computational perspective. More precisely, in the fifth case of Theorem \ref{theorem_26}, we determine the four special values of $j$
mentioned there in case $\sigma$ is the $\mathbb{F}_q$-rational lift of an automorphism of $\mathcal S_q$.

\begin{proposition}\label{prop:fourjs}
Let $\sigma$ be the $\mathbb{F}_q$-rational lift of an element of $\Aut(\mathcal S_q)$ of order $q - 2q_0 + 1$.
Then there exists a choice of the generator $\tau$ of $C_m$ such that the four values of $j$ for which $\iota(\sigma \tau^j) = m$ are $q^d \bmod{m}$ for $d = 0,1,2,3$.
\end{proposition}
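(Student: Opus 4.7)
The plan is to revisit the argument from the proof of Theorem \ref{theorem_26} that produced the four special values $j = k_0, k_1, k_2, k_3$, one for each orbit $O_d$ lying above the Frobenius conjugate $P_d := (a^{q^d}, b^{q^d})$ of the fixed point $P = (a, b)$ of $\sigma$ on $\mathcal{S}_q$. Since $\sigma$ is the $\mathbb{F}_q$-rational lift, it commutes with the $q$-Frobenius, so $\sigma(P_d) = P_d$ for $d = 0, 1, 2, 3$; moreover, because $P_{(\alpha_1,\beta_1)} = \sigma(P_\infty)$ is itself $\mathbb{F}_q$-rational, the function $\tilde{w}$ is defined over $\mathbb{F}_q$ and satisfies $\tilde{w}(P_d) = \tilde{w}(P)^{q^d}$. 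Rerunning the fixed-point calculation of Theorem \ref{theorem_26} at a point of $O_d$ therefore yields
\[
\gamma \lambda^{k_d} = \tilde{w}(a, b)^{q^d}, \qquad d = 0, 1, 2, 3.
\]

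The second step is to note that $\gamma \in \mathbb{F}_q^*$. Indeed, since both $\sigma$ and $\tilde{w}$ are $\mathbb{F}_q$-rational, so is $\sigma(t)/t = \gamma/\tilde{w}$, and therefore $\gamma$ lies in $\mathbb{F}_{q^4}^* \cap \mathbb{F}_q(\tilde{\mathcal{S}}_q) = \mathbb{F}_q^*$. Consequently $\gamma^{q^d-1} = 1$, and dividing the displayed relation for $d$ by the $q^d$-th power of the relation for $d = 0$ gives
\[
\lambda^{k_d - q^d k_0} = \gamma^{q^d - 1} = 1,
\]
so $k_d \equiv q^d k_0 \pmod{m}$ for every $d$.

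The main obstacle is the final step: showing that $\gcd(k_0, m) = 1$, which is precisely what allows one to replace $\tau$ by $\tau^{k_0}$ and thereby rescale the set $\{k_0 q^d \bmod m : d = 0, 1, 2, 3\}$ to $\{q^d \bmod m : d = 0, 1, 2, 3\}$. I plan to argue by contradiction. Parametrising $O_0$ as $\{(a, b, \lambda^i c) : i = 0, \dots, m-1\}$ with $c^m = a^q + a$, the formula $\sigma(t) = \gamma t/\tilde{w}$ together with $\gamma \lambda^{k_0} = \tilde{w}(a, b)$ forces $\sigma$ to act on $O_0$ by $i \mapsto i - k_0 \pmod{m}$, so the order of $\sigma$ restricted to $O_0$ equals $m/\gcd(k_0, m)$. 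If $\gcd(k_0, m) = d > 1$, then $\sigma^{m/d}$ would be a non-identity element of $\Sigma_-$ fixing all $m$ points of $O_0$, contradicting case 5 of Theorem \ref{theorem_26}, by which $\iota(\sigma^{m/d}) = 0$. Hence $\gcd(k_0, m) = 1$ and the proposition follows.
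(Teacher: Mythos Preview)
Your proof is correct and follows essentially the same approach as the paper's: both arguments use the $\mathbb{F}_q$-rationality of the lift to conclude $\gamma\in\mathbb{F}_q^*$, obtain $k_d\equiv q^d k_0\pmod m$ from $\gamma^{q^d-1}=1$, and rule out $\gcd(k_0,m)>1$ via the contradiction with $\iota(\sigma^{m/\gcd(k_0,m)})=0$ from case~5 of Theorem~\ref{theorem_26}. The only cosmetic difference is the order of the steps: the paper establishes $\gcd(k_0,m)=1$ first and then redefines $\tau$ before computing the remaining $k_d$, whereas you compute the relation $k_d\equiv q^d k_0$ first and handle the gcd at the end.
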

\begin{proof}
From the proof of Theorem \ref{theorem_26}, we see that given $\sigma$ that fixes $P=P_{(a,b)}$, there exists $\gamma \in \mathbb{F}_{q^4}$ such that $\sigma(t)=\gamma t/\tilde{w},$ where $\gamma^m \in \mathbb{F}_q^*.$
However, since $\sigma$ is assumed to be the $\mathbb{F}_q$-rational lift of an element of $\Aut(\mathcal S_q)$, we may conclude that $\gamma \in \mathbb{F}_q^*.$

The value of $k$ such that $\sigma \tau^k$ fixes all points $\tilde{P} \in O_0$ of $\tilde{\mathcal S}_q$ lying above $P$ satisfies $\gamma \lambda^k=\tilde{w}(a,b).$ We claim that $i:=\gcd(k,m)=1$.
If this is not the case, then $(\sigma \tau^k)^{m/i}=\sigma^{m/i} \neq \id$ would fix all points $\tilde{P}$, but this is not possible, since $\iota(\sigma^{m/i})=0$ according to Theorem 26 in \cite{GMQZ}
(or see item five in Theorem \ref{theorem_26}).
Hence $\overline{\tau}:=\tau^k$ is a generator of $\Sigma_-$ and by construction $\sigma \overline{\tau}$ fixes all $\tilde{P} \in O_0$.
Redefining $\tau$ as $\overline{\tau}$ and $\lambda$ as $\lambda^k$, we obtain that $\gamma \lambda=\tilde{w}(a,b).$ Now let $k_d$ for $d=1,2,3$ satisfy $\gamma \lambda^{k_d}=\tilde{w}(a,b)^{q^d}.$ Then $\sigma \tau^{k^d}$
fixes orbit $O_d$ point-wise, but no other points. We then obtain
\[\lambda^{k_d}=\gamma^{-1}\gamma \lambda^{k_d}=\gamma^{-1}\tilde{w}(a,b)^{q^d}=\gamma^{-1}(\gamma \lambda)^{q^d}=\gamma^{q^d-1}\lambda^{q^d}=\lambda^{q^d}, \ \text{ for $d=1,2,3$,}\]
where in the last equality we used that $\gamma^{q-1}=1,$ since $\gamma \in \mathbb{F}_q^*$.
Since $\lambda$ has multiplicative order $m$, the proposition follows.
\end{proof}

\begin{remark}\label{rem:genusok}
The mistake in \cite{GMQZ} was that there it was claimed that if $\ord(\sigma) \mid (q - 2q_0 + 1)$, then $\iota(\sigma \tau^j) = 4m$ for exactly one $j \in \{ 1, \dots, m-1 \}$ and $\iota(\sigma \tau^j) = 0$ for all other $j \in \{0,1, \dots, m-1 \}$. However, in \cite{GMQZ} in this particular situation only subgroups were considered containing either all four or none of the possible elements with $\iota(\sigma \tau^j) = m$. Therefore the corresponding contribution to the different $\Delta_H$ was correctly taken to be $4m$ or $0$, implying that as far as genus computations are concerned, all the results obtained in \cite{GMQZ} are correct. In particular the results from Propositions 38, 39, 40, and 42 in \cite{GMQZ} are correct.
\end{remark}

The fact that in Theorem \ref{theorem_26} only the order of $\sigma$ is important, makes the last part of Theorem \ref{theorem:classification_subgroups_Sz} particularly useful in combination with Lemma \ref{lem:max}:
any subgroup of $\Sz(q)$ that is not contained in the first four listed maximal subgroups, is isomorphic to $\Sz(\hat{q})$ and as far as genus computations are involved only the isomorphism class matters.
Therefore, we can take the natural $\Sz(\hat{q}) \subseteq \Sz(q)$ obtained by restricting the field to $\mathbb{F}_{\hat{q}}.$
Combining this with Lemma \ref{lem:max} and the cases already treated in \cite{GMQZ}, we see that in order to deal with all subgroups of $\Sz(q) \times C_m$, we only need to deal with subgroups of
$B_0 \times C_m$ and $N_- \times C_m$. We will start with the second case.
Since $N_-$ contains an element of order $m$, we will also need to consider subgroups of a group of the form $C_m \times C_m$, which we deal with in the next subsection first.

\subsection{Description of subgroups of $C_m \times C_m$}

In this subsection we give a for us convenient description of all subgroups of the direct product $C_m \times C_m.$ The results in this subsection are valid for any value of $m$. We denote by $\sigma$ and $\tau$ two elements of $C_m \times C_m$ such that $\langle \sigma, \tau \rangle = C_m \times C_m$. Note that necessarily $\ord(\sigma)=\ord(\tau)=m.$ We start by describing a convenient set of generators of a subgroup of $C_m \times C_m$.

\begin{lemma}\label{lem:triple}
Let $m\ge 1$ be an integer, $C_m$ a cyclic group of order $m$, and $H$ a subgroup of $C_m \times C_m$. For each $H$, there exist unique positive integers $n_1$ and $n_2$ and a nonnegative integer $a$ such that:
\begin{enumerate}
\item $n_1|m$ and $n_2|m$,
\item $0 \le a < n_2$ and $n_1n_2|am$,
\item $H=\langle \sigma^{n_1}\tau^a,\tau^{n_2}\rangle$.
\end{enumerate}
\end{lemma}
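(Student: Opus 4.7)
The plan is to describe $H$ using the natural projection of $C_m \times C_m$ onto its first factor, then read off the three integers $n_1, n_2, a$ from the image and kernel of this projection together with a choice of lift.

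First I would set up notation: write every element of $C_m \times C_m$ uniquely as $\sigma^i \tau^j$ with $0 \le i, j < m$, and let $\pi_1 : H \to \langle \sigma \rangle$ be the homomorphism $\sigma^i \tau^j \mapsto \sigma^i$. Since $\langle \sigma \rangle \cong C_m$ is cyclic, $\im(\pi_1)$ is cyclic, so $\im(\pi_1) = \langle \sigma^{n_1} \rangle$ for a unique divisor $n_1$ of $m$. Similarly $\ker(\pi_1) = H \cap (\{\id\} \times \langle \tau \rangle)$ is a subgroup of the cyclic group $\langle \tau \rangle$, hence equals $\langle \tau^{n_2}\rangle$ for a unique divisor $n_2$ of $m$. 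This gives conditions (1) and the existence of $\tau^{n_2} \in H$.

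Next I would pick a preimage of $\sigma^{n_1}$ under $\pi_1$. Any two such preimages differ by an element of $\ker(\pi_1) = \langle \tau^{n_2} \rangle$, so the $\tau$-exponent of a preimage is well-defined modulo $n_2$. Choosing the unique representative in $\{0,1,\ldots,n_2-1\}$ gives the integer $a$ with $0 \le a < n_2$ and a corresponding element $\sigma^{n_1}\tau^a \in H$. To prove $H = \langle \sigma^{n_1}\tau^a, \tau^{n_2}\rangle$, the inclusion $\supseteq$ is immediate. For $\subseteq$, any element of $H$ has first coordinate $\sigma^{k n_1}$ for some integer $k$, so multiplying by $(\sigma^{n_1}\tau^a)^{-k}$ yields an element of $\ker(\pi_1) = \langle \tau^{n_2}\rangle$, whence the original element lies in $\langle \sigma^{n_1}\tau^a, \tau^{n_2}\rangle$.

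For the divisibility condition $n_1 n_2 \mid am$ in (2), the idea is that $(\sigma^{n_1}\tau^a)^{m/n_1} = \sigma^{m}\tau^{am/n_1} = \tau^{am/n_1}$ lies in $\ker(\pi_1) = \langle \tau^{n_2}\rangle$, forcing $n_2 \mid am/n_1$, which is exactly $n_1 n_2 \mid am$.

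Finally, for uniqueness of the triple $(n_1, n_2, a)$: given $H$, both $n_1$ and $n_2$ are intrinsically determined as above (from $\im(\pi_1)$ and $\ker(\pi_1)$), and once these are fixed, $a$ is determined modulo $n_2$, so the normalization $0 \le a < n_2$ pins it down. I do not expect any serious obstacle here; the only delicate point is to be careful that the canonical form uses $a \bmod n_2$ (determined by how $\ker(\pi_1)$ acts on lifts) rather than $a \bmod n_1$, and to verify the divisibility condition in (2) matches the one obtained from $(\sigma^{n_1}\tau^a)^{m/n_1} \in \langle \tau^{n_2}\rangle$.
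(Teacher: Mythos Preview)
Your proof is correct and follows essentially the same approach as the paper: the paper defines $n_1$ and $n_2$ as the minimal positive exponents with $\sigma^{n_1}\tau^a\in H$ and $\tau^{n_2}\in H$, which is exactly your description of $\im(\pi_1)$ and $\ker(\pi_1)$ in slightly different language, and the arguments for normalizing $a$, proving $n_1n_2\mid am$, and establishing generation and uniqueness are the same.
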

\begin{proof}
Let $H \subseteq C_m \times C_m$ be a subgroup. Define $n_1$ to be the smallest positive integer for which there exists an integer $a$ such that $\sigma^{n_1}\tau^a \in H$. Now let $\sigma^c\tau^d \in H$, for certain integers $c$ and $d$. If $c=sn_1+r$, with $r,s \in \Z$ and $0 \le r < n_1$, then $\sigma^{c-sn_1}\tau^{d-sa} \in H$. The definition of $n_1$ implies that $r=c-sn_1=0$ and hence that $n_1$ divides $c$. In particular $n_1$ divides $m$, since $\sigma^m=\id \in H.$ Further let $n_2$ be the smallest positive integer such that $\tau^{n_2} \in H.$ Note that similar to $n_1$, the integer $n_2$ divides any $d$ for which $\tau^d \in H$ and in particular, $n_2$ divides $m$. Moreover, from their definitions, we see that both $n_1$ and $n_2$ are uniquely determined once $H$ is specified.

Multiplying $\sigma^{n_1}\tau^a$ with a suitable power of $\tau^{n_2}$, we may assume that $0 \le a < n_2$. The exponent $a$ thus obtained is unique, since if $\sigma^{n_1}\tau^a$ and $\sigma^{n_1}\tau^{a'}$ both are in $H$, then $\tau^{a-a'} \in H$, implying that $a \equiv a' \pmod{n_2}.$ This implies that either $a=a'$ or that $a' \ge n_2$. Note that since $(\sigma^{n_1}\tau^a)^{m/n_1}=\tau^{am/n_1} \in H$, we also obtain that $am/n_1$ is a multiple of $n_2$. All that remains to be shown is that $H=\langle \sigma^{n_1}\tau^a,\tau^{n_2}\rangle$. However, if $\sigma^c\tau^d \in H$, then we have seen that $n_1$ divides $c$. Hence for a suitably chosen integer $i$, we have $(\sigma^c\tau^d)(\sigma^{n_1}\tau^a)^i=\tau^{d+ia} \in H$. But then $n_2$ divides $d+ia$, implying that $\sigma^c\tau^d \in \langle \sigma^{n_1}\tau^a,\tau^{n_2}\rangle.$
\end{proof}
The uniqueness part of the previous lemma justifies the following definition.
\begin{definition}
Let $H \subset C_m \times C_m$ be a subgroup and $n_1,n_2,a$ be as in Lemma \ref{lem:triple}. We call the triple $(n_1,n_2,a)$ the standard exponents of $H$.
\end{definition}

As in the theory of finitely generated $\Z$-modules, one can simplify the description of $H$ even further if one can replace the generators $\sigma$ and $\tau$ with other generators of $C_m \times C_m$. This would result in an even simpler description where $a$ is equal to zero and $n_1$ divides $n_2$. However, as Theorem \ref{theorem_26} shows, the roles of $\sigma$ and $\tau$ are quite different, which is we have less freedom. Note that $|H|=m^2/(n_1n_2)$, since the elements of $H$ all can uniquely be written in the form $(\sigma^{n_1}\tau^a)^i(\tau^{n_2})^j$ with $0 \le i < m/n_1$ and $0 \le j < m/n_2.$


\subsection{Subgroups of $N_- \times C_m$}

As before, let $m=q-2q_0+1$ and consider the maximal subgroup $N_- \times C_m$ of $\Aut(\tilde{\mathcal{S}}_q)$, where $N_-$ is the normalizer of $\Sigma_-$ in $\Sz(q)$.
The group $N_-$ has order $4m$ and is isomorphic to $C_m \rtimes C_4$, where the semidirect product is defined by the homomorphism $\varphi : C_4 \to \Aut(C_m)$ mapping $\zeta$, a fixed generator of $C_4$,
to the automorphism $\omega \mapsto \zeta \omega \zeta^{-1} = \omega^{q}$. See \cite{HB}, Theorem 3.10, Chapter XI for details.
The group $N_- \times C_m$ is therefore isomorphic to $(C_m \rtimes C_4) \times C_m$ and can be presented as
$$ \langle \zeta, \sigma, \tau \mid \ord(\zeta) = 4, \ord(\sigma) = \ord(\tau) = m, \zeta \sigma \zeta^{-1} = \sigma^{q}, \zeta \tau = \tau \zeta, \sigma \tau = \tau \sigma \rangle. $$
It is easy to see that all elements of order two in $N_- \times C_m$ are those of the form $\sigma^i\zeta^2$, while the elements of order four are those of the form $\sigma^i\zeta$ or $\sigma^i\zeta^3 $.
Finally, $N_-$ has a maximal subgroup $D_-$ isomorphic to the dihedral group of order $2m$, containing $\Sigma_-$.
The group $D_- \times C_m = \langle \zeta^2, \sigma, \tau \rangle$ is isomorphic to $(C_m \rtimes C_2) \times C_m$.

The genera of the quotient curves $\tilde{\mathcal{S}}_q/H$ for $H$ subgroup of $N_- \times C_m$ of the form $(C_{n_1} \rtimes C_4)\times C_{n_2}$, $(C_{n_1} \rtimes C_2)\times C_{n_2}$, and $C_{n_1}\times C_{n_2}$,
with $n_1$ and $n_2$ divisors of $m$,  were computed in Propositions 38, 39, 40 from \cite{GMQZ}. To find which subgroups of $N_- \times C_m$ are missing, we first give the following proposition.

\begin{proposition}\label{prop:subgroupsNminxCm}
Let $H$ be a subgroup of $N_- \times C_m$. Then there exist divisors $n_1$ and $n_2$ of $m$ such that one of the following holds:
\begin{enumerate}
\item $H \subseteq \Sigma_- \times C_m$,
\item $H$ is conjugated to $\langle \sigma^{m/n_1},\tau^{m/n_2},\zeta^2 \rangle \cong (C_{n_1} \rtimes C_2)\times C_{n_2}$, or
\item $H$ is conjugated to $\langle \sigma^{m/n_1},\tau^{m/n_2},\zeta \rangle \cong (C_{n_1} \rtimes C_4)\times C_{n_2}$.
\end{enumerate}
\end{proposition}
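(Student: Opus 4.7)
The plan is to analyze $H$ through the canonical projection $\pi\colon N_-\times C_m \to N_-/\Sigma_- \cong C_4$, which sends $\zeta$ to a generator and kills $\Sigma_-\times C_m$. Since $\pi(H)$ is a subgroup of $C_4$, it equals $\{\id\}$, $\langle \zeta^2\rangle$, or $\langle \zeta\rangle$, and these three possibilities correspond exactly to the three cases in the statement. Case (1) is immediate: $\pi(H) = \{\id\}$ means $H \subseteq \Sigma_-\times C_m$. For cases (2) and (3) I treat both uniformly by writing $\epsilon = 2$ or $\epsilon = 1$ respectively; in case (3), if only representatives with $\zeta^3$ are visible, I replace them by their inverses to produce a representative of the form $\sigma^i\tau^j\zeta$.

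The key arithmetic input is $q^2 \equiv -1 \pmod m$, which follows from $q \equiv 2q_0 - 1 \pmod m$ and $(2q_0-1)^2 = 2m - 1$. Together with the fact that $m$ is odd and that $m = 2q_0(q_0-1)+1 \equiv 1 \pmod{q_0-1}$, this yields the crucial coprimality statement $\gcd(q^\epsilon - 1, m) = 1$ for both $\epsilon \in \{1,2\}$, which I will use repeatedly.

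Set $K = H \cap (\Sigma_-\times C_m)$ and apply Lemma \ref{lem:triple} to the direct product $\Sigma_- \times C_m \cong C_m \times C_m$ to obtain standard exponents $(n_1',n_2',a)$ with $K = \langle \sigma^{n_1'}\tau^a,\tau^{n_2'}\rangle$. Pick a coset representative $g = \sigma^i\tau^j\zeta^\epsilon$ of $H/K$. A direct computation gives
\[ \sigma^c g \sigma^{-c} = \sigma^{i+c(1-q^\epsilon)}\tau^j\zeta^\epsilon, \]
and since $\sigma^c$ centralizes $\Sigma_-\times C_m$, conjugation by $\sigma^c$ fixes $K$ elementwise. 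Using $\gcd(1-q^\epsilon,m)=1$ I choose $c$ to arrange $i = 0$, so after replacing $H$ by its $\sigma^c$-conjugate I may assume $\tau^j \zeta^\epsilon \in H$. Raising to the $(4/\epsilon)$-th power gives $(\tau^j\zeta^\epsilon)^{4/\epsilon} = \tau^{(4/\epsilon)j} \in K \cap \langle \tau\rangle = \langle \tau^{n_2'}\rangle$, so $n_2' \mid (4/\epsilon)j$; since $n_2'$ is odd this forces $n_2' \mid j$, whence $\tau^j \in K$ and $\zeta^\epsilon \in H$.

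It remains to show $a = 0$, which is the main obstacle. Because $\zeta^\epsilon \in H$, the subgroup $K$ is $\zeta^\epsilon$-stable, so $\zeta^\epsilon(\sigma^{n_1'}\tau^a)\zeta^{-\epsilon} = \sigma^{n_1' q^\epsilon}\tau^a \in K$, and the ratio $\sigma^{n_1'(q^\epsilon-1)}$ lies in $K \cap \Sigma_-$. By the coprimality $\gcd(q^\epsilon-1,m)=1$, this element generates the same cyclic subgroup of $\Sigma_-$ as $\sigma^{n_1'}$, hence $\sigma^{n_1'} \in K$. Then $\tau^a = (\sigma^{n_1'})^{-1}(\sigma^{n_1'}\tau^a) \in K \cap \langle \tau\rangle = \langle \tau^{n_2'}\rangle$, and the normalization $0 \le a < n_2'$ from Lemma \ref{lem:triple} forces $a = 0$. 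Setting $n_1 = m/n_1'$ and $n_2 = m/n_2'$ yields $H = \langle \sigma^{m/n_1},\tau^{m/n_2},\zeta^\epsilon\rangle$; the isomorphism with $(C_{n_1}\rtimes C_{4/\epsilon})\times C_{n_2}$ follows from $\zeta^\epsilon \sigma^{m/n_1}\zeta^{-\epsilon} = \sigma^{(m/n_1)q^\epsilon}$ (which is inversion for $\epsilon = 2$ and an order-$4$ action for $\epsilon = 1$) together with the centrality of $\tau$. All the non-trivial work funnels through the single congruence $q^2 \equiv -1 \pmod m$, which simultaneously enables the conjugation reduction and collapses the general triple $(n_1',n_2',a)$ from Lemma \ref{lem:triple} to the diagonal case $a=0$.
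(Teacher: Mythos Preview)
Your proof is correct and follows essentially the same route as the paper: project to $(N_-\times C_m)/(\Sigma_-\times C_m)\cong C_4$, split into cases by the image, conjugate by a power of $\sigma$ using $\gcd(q^\epsilon-1,m)=1$ to arrange $\zeta^\epsilon\in H$, and then use the conjugation identity $\zeta^\epsilon\sigma^i\zeta^{-\epsilon}\sigma^{-i}=\sigma^{i(q^\epsilon-1)}$ together with the same coprimality to force $\sigma^i\in H$ (equivalently $a=0$). The only cosmetic differences are that the paper invokes Schur--Zassenhaus to produce an order-$2$ or order-$4$ complement (whose generator automatically has trivial $\tau$-part since $m$ is odd), whereas you reach $\zeta^\epsilon\in H$ by the elementary observation $(\tau^j\zeta^\epsilon)^{4/\epsilon}=\tau^{(4/\epsilon)j}\in\langle\tau^{n_2'}\rangle$; and you phrase the final step through the standard exponents of Lemma~\ref{lem:triple} rather than the paper's direct ``$\sigma^i\tau^j\in H\Rightarrow\sigma^i,\tau^j\in H$'' formulation.
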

\begin{proof}
It is clear that the subgroup $\Sigma_- \times C_m$ is normal in $N_-\times C_m$ and that the quotient group is cyclic of order four. Now let $H$ be a subgroup of $N_-\times C_m$ and consider the group homomorphism
$\phi: H \to (N_-\times C_m)/(\Sigma_-\times C_m)$. Then $\ker(\phi)=H \cap (\Sigma_-\times C_m)$ and therefore $H/(H \cap (\Sigma_-\times C_m))$ is a subgroup of a cyclic group of order four. Therefore the index of $H \cap (\Sigma_-\times C_m)$
in $H$ is in $\{1,2,4\}$.
Since $H \cap (\Sigma_-\times C_m)$ itself has odd cardinality, the Schur--Zassenhaus theorem implies that $H \cap (\Sigma_-\times C_m)$ has a complement $K$ in $H$. Moreover, $K$ is isomorphic to $H/(H \cap (\Sigma_-\times C_m))$.
We now distinguish three cases.

Case 1, $[H:(H \cap (\Sigma_-\times C_m))]=1$. In this case $H=H \cap (\Sigma_-\times C_m)$ and hence $H \subseteq \Sigma_- \times C_m.$

Case 2, $[H:(H \cap (\Sigma_-\times C_m))]=2$. In this case the complement $K$ contains an element of order two and $H$ cannot contain an element of order four.
Moreover, $\sigma^{-j}(\sigma^i \zeta^2)\sigma^{j}=\sigma^{i+j(q^2-1)}\zeta^2.$ Since $\gcd(q^2-1,m)=1$, we can choose $j$ such that $\sigma^{-j}(\sigma^i \zeta^2)\sigma^{j}=\zeta^2$.
Hence replacing $H$ by a suitable conjugate, we may assume that $\zeta^2$ is an element of $H$.
If $\sigma^i\tau^j\zeta^2\in H$,
then $\sigma^i\tau^j \in H$.
Moreover, if $\sigma^i\tau^j \in H$, then
\[H \ni \zeta^2(\sigma^i\tau^j)\zeta^2(\sigma^i\tau^j)^{-1}=\zeta^2\sigma^{i}\zeta^2\sigma^{-i}=\sigma^{i(q^2-1)}.\]
Since $\gcd(q^2-1,m)=1$ and $\ord(\sigma)=m$, this implies that $\sigma^i \in H.$ Hence whenever $\sigma^i\tau^j\zeta^e \in H$, with $e=0,2$,
then $\sigma^i \in H$ and hence $\tau^j \in H.$
This shows that $H$ is of the form as in case two of the proposition.

Case 3, $[H:(H \cap (\Sigma_-\times C_m))]=4$. This case is very similar to the previous one. After a suitable conjugation, we may assume that $\zeta \in H$. Then if $\sigma^i\tau^j\zeta^e \in H$, with $e=0,1,2,3$,
practically the same computation as before shows that
$\sigma^i \in H$ and $\tau^j \in H$. Hence $H$ is of the form as in case three of the proposition.
\end{proof}

This proposition shows that in order to complete the case of subgroups of $N_- \times C_m$, we only need to consider subgroups of $\Sigma_- \times C_m$. Therefore, we now turn our attention to those.
It will be convenient to define $v_p(n):=\max \{e \,:\, p^e \text{ divides } n\},$ where $n\in \Z$.

\begin{theorem}
  \label{theorem:genus_cm_times_cm_Sz}
  Let $H$ be a subgroup of $\Sigma_- \times C_m =\langle \sigma,\tau \rangle$ with standard exponents $(n_1,n_2,a)$. Suppose that $m=p_1^{e_1}\cdots p_r^{e_r}$, with $p_1,\dots,p_r$ mutually distinct prime numbers and $e_1,\dots,e_r$ positive integers.
  For $d \in \{0,1,2,3\}$, write $\nu_{d,\ell}=\min\{v_{p_\ell}(n_1q^d-a),v_{p_\ell}(n_2)\}.$
  The genus of the quotient curve $\tilde{\mathcal{S}}_q/H$ is
  $$ g_H = \frac{(q^2+1)(q-2) - \Delta_H}{2 |H|} + 1, $$
  with $|H| = m^2/(n_1n_2)$ and 
  $$ \Delta_H = \left( \frac{m}{n_2} - 1 \right) \cdot (q^2 + 1) + \sum_{d=0}^3 \left( \frac{m\prod_{\ell=1}^r p_\ell^{\nu_{d,\ell}}}{n_1n_2} - 1 \right) \cdot m. $$
\end{theorem}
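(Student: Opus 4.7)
My plan is to apply Riemann--Hurwitz and to decompose $\Delta_H$ by enumerating elements of $H$ via the standard exponents, evaluating $\iota$ on each using Theorem \ref{theorem_26}. I fix generators $\sigma,\tau$ as in Proposition \ref{prop:fourjs}, so that the four special $j$ attached to $\sigma$ in item 5 of Theorem \ref{theorem_26} are $q^d \bmod m$ for $d\in\{0,1,2,3\}$. By Lemma \ref{lem:triple} every element of $H$ is uniquely of the form
\[\omega_{i,j}:=(\sigma^{n_1}\tau^a)^i(\tau^{n_2})^j=\sigma^{in_1}\tau^{ia+jn_2},\qquad 0\le i<m/n_1,\ 0\le j<m/n_2,\]
which at the same time gives $|H|=m^2/(n_1n_2)$.

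I would then split $\Delta_H=\sum_{\omega\ne\id}\iota(\omega)$ according to whether $i=0$. When $i=0$, the $m/n_2-1$ nontrivial $\omega_{0,j}=\tau^{jn_2}$ each contribute $q^2+1$ by Theorem \ref{theorem_26}, totaling $(m/n_2-1)(q^2+1)$. When $i\ne 0$, the element $\sigma^{in_1}\in\Sigma_-\setminus\{\id\}$ has order dividing $m=q-2q_0+1$, so item 5 of Theorem \ref{theorem_26} forces $\iota(\omega_{i,j})\in\{0,m\}$, with the value $m$ attained exactly for the four exponents $k$ for which $\sigma^{in_1}\tau^k$ fixes one of the four orbits $O_0,\dots,O_3$ of $\tilde{\mathcal{S}}_q\to\mathcal{S}_q$ above the fixed points of $\Sigma_-$ pointwise.

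To pinpoint those four exponents for a power of $\sigma$, I would redo the orbit computation from the proof of Proposition \ref{prop:fourjs}: with the chosen generator $\tau$, the automorphism $\sigma$ acts on $O_d$ by multiplying the $t$-coordinate by $\lambda^{-q^d}$, hence $\sigma^{in_1}$ acts on $O_d$ by $\lambda^{-in_1q^d}$; consequently $\iota(\sigma^{in_1}\tau^k)=m$ iff $k\equiv in_1q^d\pmod{m}$ for some $d\in\{0,1,2,3\}$, these four residues being automatically distinct by Theorem \ref{theorem_26}(5) applied to the nontrivial element $\sigma^{in_1}$. For $\omega_{i,j}=\sigma^{in_1}\tau^{ia+jn_2}$ the condition becomes $jn_2\equiv i(n_1q^d-a)\pmod m$, which has a unique solution $j\in\{0,\dots,m/n_2-1\}$ precisely when $n_2\mid i(n_1q^d-a)$.

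Finally I would count, for each $d$, the number of $i\in\{1,\dots,m/n_1-1\}$ satisfying $n_2\mid i(n_1q^d-a)$. Setting $c_d:=\gcd(n_2,n_1q^d-a)=\prod_\ell p_\ell^{\nu_{d,\ell}}$, this is equivalent to $i\equiv 0\pmod{n_2/c_d}$; a prime-by-prime valuation argument using $n_1\mid m$, $\gcd(q,m)=1$, and the divisibility $n_1n_2\mid am$ from Lemma \ref{lem:triple} shows that $mc_d/(n_1n_2)$ is an integer, so the count is $mc_d/(n_1n_2)-1$. Each such pair $(i,j)$ contributes $m$ to $\Delta_H$, producing $\sum_{d=0}^3(mc_d/(n_1n_2)-1)m$; combined with the $i=0$ contribution this yields the claimed expression for $\Delta_H$, and Riemann--Hurwitz gives $g_H$. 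The main obstacle is the identification of the four special exponents for a general power $\sigma^{in_1}$ rather than for $\sigma$ itself, which is resolved by the orbit-action argument sketched above.
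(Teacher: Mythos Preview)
Your proposal is correct and follows essentially the same route as the paper: Riemann--Hurwitz, enumeration of $H$ via the standard exponents, and the reduction through Theorem \ref{theorem_26} and Proposition \ref{prop:fourjs} to counting pairs $(i,j)$ with $jn_2\equiv i(n_1q^d-a)\pmod m$. The only cosmetic difference is in how that count is carried out: the paper counts solutions with $0\le i,j<m$ prime-by-prime via the Chinese remainder theorem (getting $m\prod_\ell p_\ell^{\nu_{d,\ell}}$) and then divides by $n_1n_2$, whereas you fix $d$ and count the admissible $i\in\{1,\dots,m/n_1-1\}$ directly via the divisibility $n_2/c_d\mid i$; your explicit orbit-action argument extending Proposition \ref{prop:fourjs} from $\sigma$ to its powers $\sigma^{in_1}$ is a detail the paper leaves implicit.
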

\begin{proof}
The expression for the genus $g_H$ follows from the Riemann--Hurwitz formula and it has already been noted that $|H| = m^2/(n_1n_2)$. Therefore, all that remains to be done is to compute the quantity $\Delta_H$.
Recall that
$$ \Delta_H = \sum_{\substack{\omega \in H \\ \omega \neq \id}} \iota(\omega) = \sum_{\substack{\alpha > 0, \\ \sigma^\alpha \in H}} \iota(\sigma^\alpha) + \sum_{\substack{\beta > 0, \\ \tau^\beta \in H}} \iota(\tau^\beta) + \sum_{\substack{\alpha > 0, \beta > 0, \\ \sigma^\alpha \tau^\beta \in H}} \iota(\sigma^\alpha \tau^\beta)$$
and that $\iota(\sigma^\alpha) = 0$ for all $\alpha = 1, \dots, m-1$ and $\iota(\tau^\beta) = q^2 + 1$ for all $\beta = 1, \dots, m-1$ from Theorem \ref{theorem_26}. \\

The elements of $H$ can uniquely be written in the form $(\sigma^{n_1}\tau^a)^i(\tau^{n_2})^j$ with $0 \le i < m/n_1$ and $0 \le j < m/n_2.$ Hence the number of elements in $H$ of the form $\tau^\beta \neq \id$ is exactly $m/n_2-1$. What remains to be done is to determine the number of elements in $H \setminus \{\id\}$ of the form $\sigma^\alpha\tau^\beta$ such that $\iota(\sigma^\alpha\tau^\beta)=m$.
From Theorem \ref{theorem_26} and Proposition \ref{prop:fourjs}, we conclude that this is equal to the number of triples $(i,j,d)$, with $d=0,1,2,3$, $1 \le i < m/n_1$, and $0 \le j < m/n_2$, such that
\begin{equation}\label{eq:congruence}
jn_2\equiv i(n_1q^d-a) \pmod{m}.
\end{equation}
We will first for each $d$ count the number of solutions $(i,j)$ to congruence \eqref{eq:congruence} satisfying $0 \le i < m$ and $0 \le j < m$. In order to do this, we use the factorization of $m$ into prime numbers: $m=p_1^{e_1}\cdots p_r^{e_r}$. The congruence $jn_2\equiv i(n_1q^d-a) \pmod{p_\ell^{e_\ell}}$ is equivalent to the congruence $j(n_2/p_ \ell^{\nu_{d,\ell}})\equiv i(n_1q^d-a)/p_\ell^{\nu_{d,\ell}} \pmod{p_\ell^{e_\ell-\nu_{d,\ell}}}.$ By definition of $\nu_{d,\ell}$, both $n_2/p_ \ell^{\nu_{d,\ell}}$ and $(n_1q^d-a)/p_\ell^{\nu_{d,\ell}}$ are integers and at least one of them is not divisible by $p_\ell$ and therefore has an inverse modulo powers of $p_\ell$. This means that either $i$ or $j$ can be chosen arbitrarily between $0$ and $p_\ell^{e_\ell}-1$, while the other variable then is determined uniquely modulo $p_\ell^{e_\ell-\nu_{d,\ell}}.$ This means that the congruence $jn_2\equiv i(n_1q^d-a) \pmod{p_\ell^{e_\ell}}$ has exactly $p_\ell^{e_\ell+\nu_{d,\ell}}$ many solutions $(i,j)$ with $0 \le i < p_\ell^{e_\ell}$ and $0 \le j < p_\ell^{e_\ell}$. Using the Chinese remainder theorem, we see that congruence \eqref{eq:congruence} for a given $d$ has exactly $\prod_\ell p_\ell^{e_\ell+\nu_{d,\ell}}=m\prod_\ell p_\ell^{\nu_{d,\ell}}$ many solutions $(i,j)$ satisfying $0 \le i < m$, and $0 \le j < m$.

Now note that if $(i,j)$ is such a solution, then for any integers $\alpha$ and $\beta$, the pair $(i+m/n_1\alpha \bmod{m},j+m/n_2\beta-ma/(n_1n_2)\alpha \bmod{m})$ also is a solution (note that $n_1n_2$ divides $ma$ by Lemma \ref{lem:triple}). This means that any solution $(i,j)$ to congruence \eqref{eq:congruence} gives rise to $n_1n_2$ solutions when $\alpha$ and $\beta$ are chosen such that $0 \le \alpha < m/n_1$ and $0 \le \beta < m/n_2.$ Moreover any such a set of $n_1n_2$ solutions contains exactly one solution pair $(i,j)$ satisfying $0 \le i < m/n_1$ and $0 \le j < m/n_2$. We may therefore conclude that the number of solutions $(i,j)$ to congruence \eqref{eq:congruence} satisfying $0 \le i < m/n_1$ and $0 \le j < m/n_2$ is equal to $m\prod_\ell p_\ell^{\nu_{d,\ell}}/(n_1n_2)$. Disregarding the solution $(0,0)$, we conclude that the number of elements $h \in H\setminus\{\id\}$ for which $\iota(h)=m$ equals $m\prod_\ell p_\ell^{\nu_{d,\ell}}/(n_1n_2)-1$. The result now follows.
\end{proof}

This completes the study of the genera of the quotient curves $\tilde{\mathcal{S}}_q/H$ for $H$ subgroup of $N_- \times C_m$.
The following genera, obtained using Theorem \ref{theorem:genus_cm_times_cm_Sz} for $s =1, 2, 3, 4$, are new to the best of our knowledge.

\begin{table}[h]
  \centering
  \begin{tabularx}{\textwidth}{llX}
    \hline
    s & Field & Genus \\
    \hline
    1 & $\mathbb{F}_{2^{12}}$ & 38 \\
    2 & $\mathbb{F}_{2^{20}}$ & 104, 534, 604, 614, 3066 \\
    3 & $\mathbb{F}_{2^{28}}$ & 9080 \\
    4 & $\mathbb{F}_{2^{36}}$ & 3484, 10420, 129160, 135688, 138736, 138952, 138958, 138970, 1806442, 5141854\\
    \hline
  \end{tabularx}
  \caption{New genera from Theorem \ref{theorem:genus_cm_times_cm_Sz} for $s=1, 2, 3, 4$.}
  \label{tab:cm_times_cm_Sz}
\end{table}

\subsection{Subgroups of $B_0 \times C_m$}

Let $B_0$ be the maximal subgroup of $\Sz(q)$ of order $2(q-1)$, isomorphic to the dihedral group $D_{q-1}$, corresponding to the second case in Theorem \ref{theorem:classification_subgroups_Sz}.
Since $q-1$ and $m$ are coprime, all subgroups of $B_0 \times C_m$ are either of the form $C_d \times C_n$ or of the form $D_d \times C_n$, with $d$ dividing $q-1$ and $n$ dividing $m$. Conversely, for each $d$
dividing $q-1$ and $n$ dividing $m$ there exists a subgroup of $B_0 \times C_m$ isomorphic to $C_d \times C_n$ and a subgroup of $B_0 \times C_m$ isomorphic to $D_d \times C_n$.

\begin{theorem}
  \label{theorem:delta_b0_times_cm}
  Let $H$ be a subgroup of $B_0 \times C_m$. If $H \simeq C_d \times C_n$ for some $d$ dividing $q-1$ and $n$ dividing $m$, then the genus of the quotient curve $\tilde{\mathcal{S}}_q/H$ is
  $$ g_H = \frac{(q^2+1)(q-n-1) - 2(d - 1)n}{2dn} + 1. $$
  If $H \simeq D_d \times C_n$ for some $d$ dividing $q-1$ and $n$ dividing $m$, then the genus of the quotient curve $\tilde{\mathcal{S}}_q/H$ is
  $$ g_H = \frac{(q^2+1)(q-n-1) - dm(2q_0+1) - 3dn + 2n}{4dn} + 1. $$
\end{theorem}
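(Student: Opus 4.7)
My plan is to apply the Riemann--Hurwitz formula
$$(q^2+1)(q-2) = |H|(2g_H - 2) + \Delta_H,$$
with $\Delta_H = \sum_{\omega \in H \setminus \{\id\}} \iota(\omega)$, and to compute $\Delta_H$ element by element using Theorem \ref{theorem_26}. The structural classification of subgroups of $B_0 \times C_m$ as $C_d \times C_n$ or $D_d \times C_n$ has already been noted just before the statement, so the only work is bookkeeping: classify each non-identity element of $H$ by the order of its $\Sz(q)$-component and apply the matching case of Theorem \ref{theorem_26}.

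To fix notation, I choose $\sigma \in B_0$ generating the cyclic order-$d$ subgroup of the rotation part of $B_0$, an involution $\rho \in B_0$ (needed only in the dihedral case), and $\tau_n := \tau^{m/n}$ generating $C_n \subseteq C_m$. For $H \simeq C_d \times C_n$, I split the $dn - 1$ non-identity elements into the $n-1$ pure $C_n$-elements $\tau_n^j$ (each with $\iota = q^2 + 1$ by the first assertion of Theorem \ref{theorem_26}) and the $(d-1)n$ mixed elements $\sigma^i \tau_n^j$ with $i \ne 0$ (each with $\iota = 2$, since $\ord(\sigma^i) \mid q-1$ places them in case 3 of the theorem). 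Summation gives
$$\Delta_H = (n-1)(q^2+1) + 2(d-1)n,$$
and substituting $|H| = dn$ into Riemann--Hurwitz produces the first stated formula after straightforward rearrangement.

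For $H \simeq D_d \times C_n$, the rotation subgroup $C_d \times C_n$ contributes exactly as above, and what is new is the coset of $dn$ reflection-type elements $\rho\sigma^i \tau_n^j$ with $0 \le i \le d-1$ and $0 \le j \le n-1$. Each $\rho\sigma^i \in \Sz(q)$ is an involution, so case 1 of Theorem \ref{theorem_26} applies: $\iota(\rho\sigma^i) = m(2q_0+1) + 1$ when $j = 0$, and $\iota(\rho\sigma^i \tau_n^j) = 1$ for $1 \le j \le n-1$, using that $jm/n \not\equiv 0 \pmod{m}$ in this range. Adding the resulting reflection contribution $d\bigl(m(2q_0+1) + 1 + (n-1)\bigr) = d\bigl(m(2q_0+1) + n\bigr)$ to the rotation total gives
$$\Delta_H = (n-1)(q^2+1) + 2(d-1)n + d\bigl(m(2q_0+1) + n\bigr),$$
and Riemann--Hurwitz with $|H| = 2dn$ produces the second formula.

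The main obstacle, such as it is, is ensuring that case 3 of Theorem \ref{theorem_26} is applied correctly to the mixed products $\sigma^i \tau_n^j$: the theorem states $\iota(\sigma\tau^k) = 2$ uniformly in $k$ when $\ord(\sigma) \mid q-1$, so this is clean, but it should not be conflated with the involution case where the $k = 0$ and $k \ne 0$ contributions differ dramatically. Once the elements are correctly sorted by the order of their $\Sz(q)$-component, the rest is a routine algebraic simplification.
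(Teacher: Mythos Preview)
Your proof is correct and follows essentially the same approach as the paper's: both arguments decompose $H$ according to the $\Sz(q)$-component of each element, apply the corresponding case of Theorem~\ref{theorem_26} to read off $\iota$, sum to obtain $\Delta_H$, and finish with Riemann--Hurwitz. Your reflection contribution $d\bigl(m(2q_0+1)+n\bigr)$ is just the paper's $d\bigl(m(2q_0+1)+1\bigr)+d(n-1)$ regrouped.
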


\begin{proof}
  If $H \simeq C_d \times C_n$, then $\Delta_H = (n - 1) \cdot (q^2 + 1) + (d - 1)n \cdot 2$ from Theorem \ref{theorem_26}. The expression for $g_H$ follows from the Riemann--Hurwitz formula. \\
  Assume $H \simeq D_d \times C_n$ now. Let $\mathfrak{s}$ be an element of $D_d$ of order $2$. As a set, $H$ can be expressed as disjoint union of subsets as follows:
  $$ H = \left( C_d \times C_n \right) \cup \left( \mathfrak{s} C_d \times \{ \id_{C_n} \} \right) \cup \left( \mathfrak{s} C_d \times \left( C_n \setminus \{ \id_{C_n} \} \right) \right). $$
  From the previous case and from Theorem \ref{theorem_26}, $\Delta_H$ can be obtained as
  $$ \Delta_H = (n - 1) \cdot (q^2 + 1) + (d - 1)n \cdot 2 + d \cdot (m(2q_0+1)+1) + d(n-1) \cdot 1. $$
  The conclusion follows from the Riemann--Hurwitz formula.
\end{proof}

No new genera are found for $s = 1, 2, 3, 4$ using Theorem \ref{theorem:delta_b0_times_cm}.

\section{Galois subcovers of $\tilde{R}_q$}

Whereas we in the previous section studied Galois subcovers $\tilde{\mathcal{S}}_q$ of the form $\tilde{\mathcal{S}}_q/H$, we now deal with the case of the Ree curve and the associated Skabelund curve $\tilde{\mathcal R}_q$.
Throughout this section let $s \geq 1$ be a fixed integer, $q_0 := 3^s$, $q := 3q_0^2$ and $m := q - 3q_0 + 1$. In this subsection, $\tau$ denotes the automorphism of $\tilde{\mathcal R}_q$ fixing $x$,$y$, and $z$, while
mapping $t$ to $\lambda t$, with $\lambda \in \mathbb{F}_{q^6}$ an element of multiplicative order $m.$ It will convenient to define ten functions $w_i$, $i=1,\dots,10$, on the Ree curve that were introduced in \cite{Pedersen}.

\bigskip
\noindent
$
\begin{array}{llll}
w_1:=x^{3q_0+1}-y^{3q_0} &  w_2:=xy^{3q_0}-z^{3q_0} &  w_3:=xz^{3q_0}-w_1^{3q_0}\\
w_4:=xw_2^{q_0}-yw_1^{q_0} &   v:=xw_3^{q_0}-zw_1^{q_0} &  w_5:=yw_3^{q_0}-zw_2^{q_0}\\
w_6:=v^{3q_0}-w_2^{3q_0}+xw_4^{3q_0} &  w_7:=yw_2^{q_0}-xw_3^{q_0}-w_6^{3q_0} &  w_8:=w_5^{3q_0}+xw_7^{3q_0}\\
w_9:=w_2^{q_0}w_4-yw_6^{q_0} &  w_{10}:=zw_6^{q_0}-w_3^{q_0}w_4 &  \\
\end{array}
$

\bigskip
\noindent
These functions were used in \cite{DE} to obtain a smooth embedding of the Ree curve in thirteen-dimensional projective space. For future reference, we collect some facts on the function $w_8$ in the form of two lemmas.
\begin{lemma}\label{lem:divisorw8}
Let $P_{(0,0,0)}$, respectively $P_\infty$, be the common zero, respectively the only pole of the functions $x$, $y$ and $z$ on the Ree curve. Then
$$(w_8)=(q+1)(q+3q_0+1)(P_{(0,0,0)}- P_\infty)=\frac{q^3+1}{m}(P_{(0,0,0)}- P_\infty).$$
\end{lemma}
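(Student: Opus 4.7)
The identity $(q+1)(q+3q_0+1) = (q^3+1)/m$ follows from $q^3+1=(q+1)(q^2-q+1)$ together with $(q-3q_0+1)(q+3q_0+1)=(q+1)^2-9q_0^2=q^2-q+1$, the last equality using $q=3q_0^2$. Hence only the first equality of the statement requires proof, i.e.\ the divisor equality $(w_8) = (q+1)(q+3q_0+1)(P_{(0,0,0)} - P_\infty)$.

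My plan is to compute $(w_8)$ recursively along the chain of definitions $w_1, w_2, \ldots, w_8$, starting from baseline data on $\mathcal{R}_q$. From the defining equations $y^q-y=x^{q_0}(x^q-x)$ and $z^q-z=x^{2q_0}(x^q-x)$ one reads off that $x,y,z$ have $P_\infty$ as their unique pole with orders $q$, $q+q_0$, $q+2q_0$ respectively, and share $P_{(0,0,0)}$ as a common zero. Each $w_i$ in \cite{Pedersen} is designed as a sum of two terms whose leading contributions at $P_\infty$ coincide and cancel, so $v_{P_\infty}(w_i)$ is strictly larger than that common leading order; tracking the next-order behaviour pins down the exact pole order at $P_\infty$ at each step. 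In parallel, one verifies that the construction introduces no new zeros or poles away from $\{P_\infty, P_{(0,0,0)}\}$, so that each $w_i$ has divisor of the form $d_i(P_{(0,0,0)} - P_\infty)$ with $d_i = -v_{P_\infty}(w_i)$.

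Specialising to $w_8 = w_5^{3q_0} + x\,w_7^{3q_0}$, one checks that the inductive pole orders of $w_5$ and $w_7$ obtained along the chain satisfy $3q_0\,v_{P_\infty}(w_5) = v_{P_\infty}(x) + 3q_0\,v_{P_\infty}(w_7)$, so the two summands defining $w_8$ have the same pole order; their leading coefficients then cancel, yielding the drop that produces $d_8 = (q+1)(q+3q_0+1)$. Combined with the support statement, this gives the claim.

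The main obstacle is the intricate valuation bookkeeping along the chain $w_1, \ldots, w_8$, in particular the verification that the cancellations occur globally and not only at $P_\infty$ (so that no zero appears outside $P_{(0,0,0)}$). The cleanest route is to invoke \cite{Pedersen}, where precisely these divisors are worked out, and which is exploited in \cite{DE} for the smooth embedding of $\mathcal R_q$ in $\mathbb{P}^{13}$; alternatively, a self-contained derivation proceeds by fixing a local uniformiser at $P_\infty$, expanding $x,y,z$ as Laurent series there, and using the Ree equations to iteratively rewrite higher-order terms while confirming the cancellations at the two rational points.
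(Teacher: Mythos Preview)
Your plan does more work than necessary, and the route you sketch---tracking cancellations at $P_\infty$ and verifying that each intermediate $w_i$ has divisor supported only on $\{P_{(0,0,0)}, P_\infty\}$---is precisely the hard direction. The paper avoids both difficulties by working at $P_{(0,0,0)}$ instead.

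Concretely, the paper cites \cite[Equation (A18)]{Pedersen} for $v_{P_\infty}(w_8) = -(q+1)(q+3q_0+1)$ and notes that $P_\infty$ is the only pole of $w_8$ since $w_8$ is a polynomial in $x,y,z$. What remains is to show $v_{P_{(0,0,0)}}(w_8) = (q+1)(q+3q_0+1)$; once this is known, the degree-zero constraint on principal divisors forces the zero divisor of $w_8$ to be exactly $(q+1)(q+3q_0+1)P_{(0,0,0)}$ with no further zeros anywhere, and the lemma follows. This valuation at $P_{(0,0,0)}$ is easy: starting from $v_{P_{(0,0,0)}}(x)=1$, $v_{P_{(0,0,0)}}(y)=q_0+1$, $v_{P_{(0,0,0)}}(z)=2q_0+1$, one checks that at every step in the chain defining $w_1,\dots,w_8$ the two summands have \emph{distinct} valuations at $P_{(0,0,0)}$, so the strict triangle inequality applies and the recursion is pure arithmetic---no cancellation analysis is needed at any stage.

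Your assertion that each intermediate $w_i$ already has divisor $d_i(P_{(0,0,0)}-P_\infty)$ is the step you yourself flag as the main obstacle; it is unproved in your sketch, and the paper's argument never needs it. The shortcut you are missing is that computing the zero order at a single point and matching it against the total pole degree is enough to pin down the entire zero divisor.
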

\begin{proof}
From \cite[Equation (A18)]{Pedersen} $v_{P_\infty}(w_8)=-(q+1)(q+3q_0+1)$ and $P_\infty$ is the only pole of $w_8$ as it is a polynomial in $x$,$y$ and $z$.
The lemma follows by proving that $v_{P_{(0,0,0)}}(w_8)=(q+1)(q+3q_0+1)$.
However, this follows directly from the defining equations for the functions $w_i$ and $v$ using the strict triangle equality and the fact that
$v_{P_{(0,0,0)}}(x)=1$, $v_{P_{(0,0,0)}}(y)=q_0+1$, and $v_{P_{(0,0,0)}}(z)=2q_0+1$, which in turn can be deduced from the defining equation of $\mathcal R_q$.
\end{proof}

\begin{lemma}\label{lem:zeroesw8qminusw8}
All zeroes of the function $w_8^q-w_8$ on the curve $\mathcal R_q$ are $\mathbb{F}_q$-rational.
\end{lemma}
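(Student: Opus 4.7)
The plan is to follow the Suzuki approach from the proof of Theorem~\ref{theorem_26}, which explicitly factored $\tilde w^q+\tilde w$ into pieces whose zeros are all $\mathbb F_q$-rational. First I would record a reduction principle: at any place $P$ of $\mathcal R_q$ with $x(P)\in\mathbb F_q$, the defining equations $y^q-y=x^{q_0}(x^q-x)$ and $z^q-z=x^{2q_0}(x^q-x)$ force $y(P),z(P)\in\mathbb F_q$, so $P$ is $\mathbb F_q$-rational; in particular, every zero of $x^q-x$ on $\mathcal R_q$ is $\mathbb F_q$-rational.

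Next I would compute $w_i^q-w_i$ recursively along Pedersen's chain $w_1,w_2,w_3,w_4,v,w_5,w_6,w_7,w_8$. The key algebraic fact is that $q_0=3^s$ and $3q_0=3^{s+1}$ are powers of the characteristic, so the maps $a\mapsto a^{q_0}$ and $a\mapsto a^{3q_0}$ are additive. Expanding $y^q=y+x^{q_0}(x^q-x)$, $z^q=z+x^{2q_0}(x^q-x)$ and the resulting Frobenius iterates yields, after cancellation,
\begin{equation*}
w_1^q-w_1=x^{3q_0}(x^q-x),\quad w_2^q-w_2=y^{3q_0}(x^q-x),\quad w_3^q-w_3=z^{3q_0}(x^q-x),
\end{equation*}
and, for instance,
\begin{equation*}
w_4^q-w_4=(x^q-x)(w_2-xw_1)^{q_0},\quad v^q-v=(x^q-x)(w_3-x^2w_1)^{q_0},
\end{equation*}
together with analogous identities for $w_5,w_6,w_7$. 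At each stage the factor $(x^q-x)$ appears, multiplied by a $q_0$-th (or higher) power of a simpler polynomial. Propagating through to $w_8=w_5^{3q_0}+xw_7^{3q_0}$ should yield an explicit factorization $w_8^q-w_8=(x^q-x)\cdot H(x,y,z)$.

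It then remains to show that the zeros of $H$ are also $\mathbb F_q$-rational. This is achieved by iterating the kind of argument used in the Suzuki case for $\tilde y$: each ``extra'' polynomial factor $G$ appearing in the intermediate identities (like $w_2-xw_1$ or $w_3-x^2w_1$) itself satisfies an identity of the form $G^q-G=(x^q-x)\cdot G'$ obtained by the same Frobenius expansion. Hence if $G(P)=0$, then $G^q(P)=0$, so $(x^q-x)(P)\cdot G'(P)=0$, and either $x(P)\in\mathbb F_q$ (in which case $P$ is rational by the reduction principle) or $G'(P)=0$, in which case the argument is applied to the strictly simpler polynomial $G'$. The recursion descends along Pedersen's chain in a well-founded way and terminates in factors (powers of $x$, $y$, $z$, or $x^q-x$) whose zeros are manifestly $\mathbb F_q$-rational.

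The principal obstacle is the computational complexity for the later terms: the expressions for $w_6,w_7,w_8$ are considerably more intricate than those for $w_1,w_2,w_3$, and one must track the sign cancellations in characteristic $3$ with care in order to preserve the clean $(x^q-x)$ factorization through each step. Once these identities are established by direct computation (or by invoking the relevant formulas from \cite{Pedersen} and \cite{DE}), the proof is complete.
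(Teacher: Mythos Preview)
Your approach is essentially the paper's: compute the Frobenius differences $w_i^q-w_i$, extract the factor $x^q-x$, and use that zeros of $x^q-x$ on $\mathcal R_q$ are $\mathbb F_q$-rational. The paper invokes exactly the identities you anticipate, citing \cite{Pedersen} for $w_8^q-w_8=w_7^{3q_0}(x^q-x)$, $w_7^q-w_7=(w_2x-w_3)^{q_0}(x^q-x)$, and $w_2^q-w_2=y^{3q_0}(x^q-x)$.

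The gap in your plan is the termination claim. Pure iteration of ``$G(P)=0\Rightarrow G'(P)=0$'' does not descend along Pedersen's chain in any obviously well-founded way: for instance, from $(w_2x-w_3)(P)=0$ the next step produces $(y^{3q_0}(x^q+x)+z^{3q_0})(P)=0$, which is neither in Pedersen's list nor simpler in any evident sense, and further iterations only get worse. What the paper does instead is stop iterating after two steps and \emph{combine} the accumulated conditions algebraically: from $w_7(P)=0$ and the definition of $w_8$ it extracts $w_5(P)=0$; substituting $w_3(P)=x(P)w_2(P)$ into $w_5=yw_3^{q_0}-zw_2^{q_0}$ together with $w_2(P)\neq0$ (itself a one-step descent) yields $(yx^{q_0}-z)(P)=0$; a short direct calculation with the defining equations of $\mathcal R_q$ then forces $y(P)^q(x^q-x)^{q_0}(P)=0$, a contradiction. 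The missing idea is this algebraic combination of several simultaneous vanishing conditions, not further descent.
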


\begin{proof}
We can use the definitions of the functions $w_i$ recalled above and the relations between them given in \cite[Appendix A]{Pedersen} to find a contradiction assuming that a zero $P$ of $w_8^q-w_8$ which
is not $\mathbb{F}_q$-rational exists. From \cite[Equation (A18)]{Pedersen} $w_8^q-w_8=w_7^{3q_0}(x^q-x)$ so $P$ is also a zero of $w_7$. This implies together with the definition of $w_8$ that $w_5$ also vanishes at $P$.
From \cite[Equation (A16)]{Pedersen} and the defining equations of $\mathcal R_q$,
$w_7^q-w_7=w_2^{q_0}(y^q-y)-w_3^{q_0}(x^q-x)=(w_2x-w_3)^{q_0}(x^q-x)$, so that $P$ is also a zero of $w_2x-w_3$.
 This shows that $P$ is a common zero of $w_2x-w_3$, $w_5$, and $w_7$. From the definition of $w_5$ we have
$$0=w_5(P)=(yw_3^{q_0}-zw_2^{q_0})(P)=(yx^{q_0}w_2^{q_0}-zw_2^{q_0})(P)=w_2^{q_0}(P) \cdot (yx^{q_0}-z)(P).$$
From \cite[Equation (A4)]{Pedersen} $w_2(P) \ne 0$ since the zeros of $w_2^q-w_2=y^{3q_0}(x^q-x)$, and hence of $w_2$, are all $\mathbb{F}_q$-rational. Hence $P$ needs to be a zero of $yx^{q_0}-z$.
Combining this with $z^q-z=x^{q_0}(y^q-y)$ we get that
$$x^{q_0}(y^q-y)(P)=z(P)^q-z(P)=y^qx^{q_0q}(P)-yx^{q_0}(P),$$
hence $x^{q_0}y^q(P)=x^{q_0q}y^q(P)$ and $y^q(x^q-x)^{q_0}(P)=0$. This implies that $P$ is $\mathbb{F}_q$-rational, a contradiction.
\end{proof}

We now follow exactly the same approach as in the previous section and start with the Ree-variant of Proposition \ref{prop:rationalliftSuzuki}. It refines Lemma 4.2 from \cite{Sk}.

\begin{proposition}\label{prop:rationalliftRee}
Every automorphism of $\mathcal{R}_q$ can be lifted to an automorphism of $\tilde{\mathcal R}_q$ defined over $\mathbb{F}_q$ in a unique way.
The resulting collection of automorphisms forms a group isomorphic to $\Ree(q).$
\end{proposition}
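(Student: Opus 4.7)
The plan is to mirror the proof of Proposition \ref{prop:rationalliftSuzuki}, with Lemma \ref{lem:divisorw8} providing the Ree analog of the key divisor identity and the function $w_8$ playing the role that $w$ plays in the Suzuki case.

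First, I would recall from Lemma 4.2 of \cite{Sk} that $\Aut(\mathcal{R}_q)\cong \Ree(q)$ is generated by affine automorphisms $\psi$ stabilizing $P_\infty$ (with $\psi(x)=ax+b$ for $a\in\mathbb{F}_q^*$, $b\in\mathbb{F}_q$, and appropriate companion expressions on $y$ and $z$) together with a single involution $\phi$ interchanging $P_\infty$ and $P_{(0,0,0)}$. Skabelund produces for each of these generators at least one lift to $\Aut(\tilde{\mathcal{R}}_q)$ defined over $\mathbb{F}_{q^6}$: for $\psi$ the lift has the form $\psi(t)=\alpha t$ with $\alpha^m=a$, while the lift of $\phi$ is of the form $\phi(t)=t/w_8$ up to an $m$-th root of unity. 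My task is then to select the unique $\mathbb{F}_q$-rational representative among the $m$ possible lifts of each generator.

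Second, I would use the numerical identity $\gcd(m,q-1)=1$, which follows from $(q-3q_0+1)(q+3q_0+1)=q^2-q+1=q(q-1)+1$, showing that $m$ divides $q^2-q+1$ and hence is coprime to $q-1$. Consequently, for every $a\in\mathbb{F}_q^*$ the equation $\alpha^m=a$ admits a unique solution $\alpha\in\mathbb{F}_q^*$, giving a canonical $\mathbb{F}_q$-rational lift of each affine $\psi$. For the involution, the identity to verify is $\phi(x^q-x)/(x^q-x)=c\, w_8^{-m}$ for some $c\in\mathbb{F}_q^*$. This is done by divisor comparison: since the $q^3+1$ $\mathbb{F}_q$-rational points of $\mathcal{R}_q$ form a single $\phi$-orbit with $\phi$ swapping $P_\infty$ and $P_{(0,0,0)}$, one computes $\operatorname{div}\bigl(\phi(x^q-x)/(x^q-x)\bigr)=(q^3+1)(P_\infty-P_{(0,0,0)})$, which matches $\operatorname{div}(w_8^{-m})$ by Lemma \ref{lem:divisorw8}. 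The ratio is therefore a constant, and it lies in $\mathbb{F}_q^*$ because $w_8$ is a polynomial in $x,y,z$ and $\phi$ acts over $\mathbb{F}_q$. Taking the unique $\mathbb{F}_q$-rational $m$-th root of $c$ then yields the $\mathbb{F}_q$-rational lift of $\phi$.

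Third, uniqueness and the group structure follow as in the Suzuki case. Any two lifts of $\sigma\in\Aut(\mathcal{R}_q)$ differ by $\tau^k$ for some $k\in\{0,\dots,m-1\}$, and such a composite is $\mathbb{F}_q$-rational exactly when $\lambda^k\in\mathbb{F}_q^*$; since $\lambda$ has order $m$ and $\gcd(m,q-1)=1$, this forces $k=0$. The collection of $\mathbb{F}_q$-rational lifts is thus closed under composition, contains exactly $|\Ree(q)|$ elements, and the forgetful map discarding the action on $t$ is a bijective group homomorphism onto $\Aut(\mathcal{R}_q)$.

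The main obstacle is the divisor computation in the second paragraph: one must verify that $w_8$ (rather than some other combination of the $w_i$) is precisely the function whose $m$-th power tracks the action of $\phi$ on $x^q-x$. This is exactly what Lemma \ref{lem:divisorw8} accomplishes; once that identity is in hand, the rest of the proof is a direct adaptation of the Suzuki argument and depends only on the numerical fact $\gcd(m,q-1)=1$.
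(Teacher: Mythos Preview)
Your proposal is correct and follows essentially the same approach as the paper. The only minor difference is that the paper directly cites \cite{Sk} for the explicit lift $\phi(t)=t/w_8$, which is already defined over $\mathbb{F}_q$ and therefore needs no further adjustment, whereas you re-derive this via a divisor comparison using Lemma~\ref{lem:divisorw8}; this extra argument is sound but not required, since Skabelund's formula already gives $c=1$.
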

\begin{proof}
The automorphism group $\Aut(\mathcal R_q)$ is generated by an involution $\phi$ and automorphisms $\psi_{abcd}$ defined by $\psi_{abcd}(x)=ax+b$, $\psi_{abcd}(y)=a^{q_0+1}y+ab^{q_0}x+c$, and $\psi_{abcd}(z)=a^{2q_0+1}z-a^{q_0+1}b^{q_0}y+ab^{2q_0}x+d$, with $a \in \mathbb{F}_q^*$ and $b,c,d \in \mathbb{F}_q.$
As explained in \cite{Sk}, $\psi_{abcd}$ can be lifted to an automorphism $\psi$ of $\tilde{\mathcal R}_q$ by setting $\psi(t)=\alpha t$, where $\alpha^m=a.$ Since $\gcd(q-1,m)=1$,
there exists exactly one choice for $\alpha \in \mathbb{F}_q^*$ such that $\alpha^m=a$.
The involution $\phi$, satisfies $\phi(x)=w_6/w_8$, $\phi(y)=w_{10}/w_8$, and $\phi(x)=w_9/w_8$. As observed in \cite{Sk}, it can be lifted to an automorphism of $\tilde{\mathcal R}_q$ by defining $\phi(t)=t/w_8.$
The remainder of the proof is now similar as the proof of Proposition \ref{prop:rationalliftSuzuki}.
\end{proof}

As for the Suzuki case, we will call the lift of $\sigma \in \Aut(\mathcal{R}_q)$ described in Proposition \ref{prop:rationalliftRee} the $\mathbb{F}_q$-rational lift of $\sigma$ and denote this lift again by $\sigma$.
Also the fact already proved in \cite{GMQZ} that $\Aut(\tilde{\mathcal{R}}_q)=\Ree(q) \times C_m$ is again an easy consequence of the existence of $\mathbb{F}_q$-rational lifts. Continuing the same strategy as in the previous section,
we now collect various facts on subgroups of $\Ree(q)$. See \cite{LN}, Section 2 for details. 
\begin{theorem}
  \label{theorem:classification_subgroups_Ree}
  Up to conjugation, the Ree group $\Ree(q)$ has the following maximal subgroups.
  \begin{enumerate}
  \item The Frobenius group $F$ of order $q^3(q-1)$.
  \item The centralizer $C$ of an involution, of order $q(q-1)(q+1)$.
  \item The normalizer $N_-$ of a cyclic Singer group $\Sigma_-$ with $|\Sigma_-| = q-3q_0+1$ and $|N_-| = 6 \cdot |\Sigma_-|$.
  \item The normalizer $N_+$ of a cyclic Singer group $\Sigma_+$ with $|\Sigma_+| = q+3q_0+1$ and $|N_+| = 6 \cdot |\Sigma_+|$.
  \item The normalizer $N$ of a cyclic group $A$ with $|A| = (q+1)/4$ and $|N| = 6(q+1)$.
  \item The Ree groups $\Ree(\hat{q})$ for $q = \hat{q}^h$, with $h$ a prime number.
  \end{enumerate}
  Let $N_2$ be the normalizer of a Sylow 2-subgroup of $\Ree(q)$. Then, for any subgroup $K$ of $\Ree(q)$, one of the following three possibilities
  occurs: $K$ is isomorphic to $\Ree(\hat{q})$ where $q=\hat{q}^k$ and $1 \le k \le 2s+1$, or $K$ is isomorphic to $\PSL(2, 8)$,
  or $K$ is conjugated to a subgroup of one of $F$, $C$, $N_-$, $N_+$, $N$, or $N_2$. Finally, the subgroup $N_2$ has order $168$, $\PSL(2, 8)$ has
  order $504$ and both have conjugates contained in $\Ree(3)$.
\end{theorem}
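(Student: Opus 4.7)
The plan is to reduce this entirely to Kleidman's classification of maximal subgroups of the Ree groups (\cite{LN}), since this theorem is a compilation of well-known results about the abstract group $\Ree(q)$ rather than something I would re-derive from scratch. Still, if one were to give an internal argument, the natural strategy is to exploit the order factorisation $|\Ree(q)| = q^3(q-1)(q+1)(q-3q_0+1)(q+3q_0+1)$, combined with the geometric action of $\Ree(q)$ on the Ree unital, to locate each family of maximal subgroups as a stabiliser or normaliser.

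Concretely, for the first part I would realise each listed maximal subgroup by a structural recipe: $F$ as the Borel-type subgroup normalising a Sylow $3$-subgroup (stabiliser of a point of the unital); $C$ as the centraliser of an involution, whose structure $\Ree(q)$ inherits from the $(B,N)$-pair decomposition; $N_-$, $N_+$ as normalisers of the two non-split maximal tori whose orders factor $q^2-q+1$; $N$ as the normaliser of the cyclic torus of order $(q+1)/4$; and the $\Ree(\hat q)$ as subfield subgroups arising from Galois descent along $\mathbb{F}_q/\mathbb{F}_{\hat q}$. Maximality of each family would then be verified by inspecting indices and ruling out overgroups using standard Lie-type techniques (no overgroup of a Singer-type normaliser can properly sit inside $\Ree(q)$ without being all of it, and similarly for the others).

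For the second part, classifying all subgroups up to conjugacy, the strategy is dichotomic. Given a subgroup $K \leq \Ree(q)$, one distinguishes cases by the unipotent/semisimple nature of $K$: if $K$ contains a nontrivial $3$-element, one shows $K$ sits inside a conjugate of $F$ (or a subfield Ree group); otherwise $K$ is a $3'$-subgroup, and its Sylow structure forces it into one of $C$, $N_-$, $N_+$, $N$, or $N_2$, or into an exceptional configuration. The exceptional configurations come down to the two sporadic possibilities $\PSL(2,8)$ and subgroups of $N_2$, both of which embed in $\Ree(3) \leq \Ree(q)$ by the Bombieri/Kleidman analysis.

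The main obstacle is this exhaustive ruling-out of exotic subgroups, i.e.\ showing that no other ``small'' simple or almost-simple subgroups occur. This is precisely the content of Kleidman's theorem and relies on Aschbacher-style reductions for groups of Lie type together with explicit character-theoretic computations; I would therefore invoke \cite{LN} for this step rather than try to reproduce it. The order statements $|N_2|=168$ and $|\PSL(2,8)|=504$, as well as their embedding into $\Ree(3)$, can then be checked by direct computation in $\Ree(3)$, which is a small enough group to handle by hand.
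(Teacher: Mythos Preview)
Your approach matches the paper's: this theorem is not proved in the paper at all but simply quoted with the reference ``See \cite{LN}, Section 2 for details.'' So treating it as a citation to the existing classification is exactly right. One small point of attribution: the reference \cite{LN} you invoke is Levchuk--Nuzhin, not Kleidman; the structural description of subgroups of $\Ree(q)$ you are relying on is due to them (Kleidman's paper handles maximal subgroups of $\Ree(q)$ and its automorphism group, but the finer statement about arbitrary subgroups landing in $F$, $C$, $N_\pm$, $N$, $N_2$, $\PSL(2,8)$, or $\Ree(\hat q)$ is in Levchuk--Nuzhin).
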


Lemma \ref{lem:max} and Theorem \ref{theorem:classification_subgroups_Ree} allow us to describe all maximal subgroups of $\Aut(\tilde{\mathcal{S}}_q)=\Ree(q)\times C_m$.
We obtain the following analogue of Corollary \ref{cor:subgroupsautskabelund}:
\begin{corollary}\label{cor:subgroupsautskabelund2}
Any subgroup $H \subset \Ree(q)\times C_m$ is either of the form $\Ree(q) \times C_n$, with $n|m$ and $C_n \subseteq C_m$ the unique subgroup of order $n$, or contained in $M \times C_m$ with $M$ a maximal subgroup of $\Ree(q).$
\end{corollary}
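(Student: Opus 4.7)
The plan is to mirror exactly the argument used for the Suzuki case in Corollary \ref{cor:subgroupsautskabelund}. Given a subgroup $H \subset \Ree(q) \times C_m$, either $H = \Ree(q) \times C_m$ (in which case we are done), or $H$ is contained in some maximal subgroup of $\Ree(q) \times C_m$. By Lemma \ref{lem:max} the latter falls into three types: $H$ lies inside (i) $M \times C_m$ for a maximal subgroup $M$ of $\Ree(q)$; (ii) $\Ree(q) \times C_{m/p}$ for a prime $p$ dividing $m$; or (iii) a maximal subgroup $K$ of $\Ree(q) \times C_m$ for which $|\ker(\pi_2)|$ is a multiple of $|\Ree(q)|/m = q^3(q-1)(q+1)(q+3q_0+1)$.

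In case (i) the conclusion is immediate. In case (iii) I would argue that $\ker(\pi_2)$, which identifies with a subgroup of $\Ree(q)$, must be all of $\Ree(q)$; this forces $K = \Ree(q) \times C_{m/p}$, so case (iii) reduces to case (ii). Case (ii) is then treated by iterating the same argument on $H$ viewed as a subgroup of $\Ree(q) \times C_{m/p}$, peeling off another prime factor of $m$ each time; the process terminates either with $H = \Ree(q) \times C_n$ for some $n \mid m$, or with $H \subseteq M \times C_n \subseteq M \times C_m$ for some maximal $M$ of $\Ree(q)$.

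The essential step is therefore to show that no proper subgroup of $\Ree(q)$ has order divisible by $|\Ree(q)|/m$. By Theorem \ref{theorem:classification_subgroups_Ree} it suffices to check this for $F$, $C$, $N_-$, $N_+$, $N$, together with $N_2$, $\PSL(2,8)$, and the Ree subgroups $\Ree(\hat{q})$ with $q = \hat{q}^h$ and $h$ an odd prime. For each, one verifies that the order fails to be a multiple of $|\Ree(q)|/m$ by a direct numerical comparison: for example $|F| = q^3(q-1)$ lacks the factor $(q+1)(q+3q_0+1)$ present in $|\Ree(q)|/m$, while $|C|$, $|N_\pm|$, $|N|$, $|N_2|$ and $|\PSL(2,8)|$ are easily checked to be strictly smaller than $|\Ree(q)|/m$. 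For $\Ree(\hat{q})$, since $h \geq 3$ we have $\hat{q} \leq q^{1/3}$ and therefore $|\Ree(\hat{q})| \leq \hat{q}^7 \leq q^{7/3}$, which is far smaller than $|\Ree(q)|/m \approx q^6$.

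The main obstacle is the case-by-case bookkeeping for the subgroup families, particularly handling the various $\Ree(\hat{q})$ uniformly; beyond that the argument is entirely parallel to the Suzuki case and no new idea is required.
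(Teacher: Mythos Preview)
Your proposal is correct and follows exactly the same approach as the paper: the paper's proof simply states that the argument is identical to that of Corollary~\ref{cor:subgroupsautskabelund}, with the key ingredient being that any subgroup of $\Ree(q)$ of index at most $m$ equals $\Ree(q)$ itself, which is precisely what your case-by-case size comparison establishes. Your write-up is in fact more detailed than the paper's (which omits the verification entirely), and checking $N_2$ and $\PSL(2,8)$ separately is harmless but unnecessary since they sit inside $\Ree(3)$.
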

\begin{proof}
The proof is similar to that of Corollary \ref{cor:subgroupsautskabelund}. The main ingredient is that Theorem \ref{theorem:classification_subgroups_Ree} implies that a subgroup of $\Ree(q)$ of index at most $m$,
is equal to $\Ree(q)$ itself.
\end{proof}

In \cite{GMQZ}, the genus of the quotient curve $\tilde{\mathcal{R}}_q/H$ is computed when $H$ is one of following subgroups of $\Aut(\tilde{\mathcal{R}}_q)$:
\begin{itemize}
\item $F \times C_m$ or one of its subgroups;
\item $C \times C_m$ or one of its subgroups;
\item $N_+ \times C_m$ or one of its subgroups;
\item $N_- \times C_m$ or one of its subgroups of the form $K \times C_n$ with $K$ a subgroup of $N_-$ and $n$ dividing $m$;
\item $N \times C_m$ or one of its subgroups;
\item $\Ree(\hat{q}) \times C_n$ for suitable $\hat{q}$ and for $n$ dividing $m$.
\end{itemize}

Corollary \ref{cor:subgroupsautskabelund2} implies that the only cases where the genus of $\tilde{\mathcal{R}}_q/H$ has not been computed yet are if $H$ is one of the missing subgroups of $N_- \times C_m$ or $H$ is one of the
missing subgroups of $\Ree(3) \times C_m$. For a generic subgroup $H$ of $\Aut(\tilde{\mathcal{R}}_q)$, let $g_H$ be the genus of the quotient curve $\tilde{\mathcal{R}}_q/H$. Again, we can use the theory of ramification groups
and Hilbert's different formula to compute $g_H$.
For each $\omega \in \Aut(\tilde{\mathcal{R}}_q)$, the quantity $\iota(\omega)$ was computed in \cite{GMQZ}, Theorem 48; however, as in the Suzuki case one mistake was made.
Hereby we give the correct formulation and include a proof for the corrected case.
%
\begin{theorem}\label{theorem_48}
Let $\sigma \in \Ree(q) \times \{ \id_{C_m} \}$, $\sigma \neq \id$ and $\{ \id_{\Ree(q)} \} \times C_m = \langle \tau \rangle$. Then $\iota(\tau^k) = q^3 + 1$ for all $k = 1, \dots, m-1$ and one of the following cases occurs.

\bigskip
\noindent
1. $\ord(\sigma) = 3$, $\sigma$ is in the center of a Sylow 3-subgroup, $\iota(\sigma) = m(q+3q_0+1)+1$, and $\iota(\sigma \tau^k) = 1$ for all $k = 1, \dots, m-1$;

\bigskip
\noindent
2. $\ord(\sigma) = 3$, $\sigma$ is not in the center of any Sylow 3-subgroup $\iota(\sigma) = m(3q_0+1)+1$, and $\iota(\sigma \tau^k) = 1$ for all $k = 1, \dots, m-1$;

\bigskip
\noindent
3. $\ord(\sigma) = 9$, $\iota(\sigma) = m+1$, and $\iota(\sigma \tau^k) = 1$ for all $k = 1, \dots, m-1$;

\bigskip
\noindent
4. $\ord(\sigma) = 2$, $\iota(\sigma \tau^k) = q+1$ for all $k = 0, \dots, m-1$;

\bigskip
\noindent
5. $\ord(\sigma) = 6$, $\iota(\sigma \tau^k) = 1$ for all $k = 0, \dots, m-1$;

\bigskip
\noindent
6. $\ord(\sigma) \mid (q-1)$, $\ord(\sigma) \neq 2$, $\iota(\sigma \tau^k) = 2$ for all $k = 0, \dots, m-1$;

\bigskip
\noindent
7. $\ord(\sigma) \mid (q+1)$, $\ord(\sigma) \neq 2$, $\iota(\sigma \tau^k) = 0$ for all $k = 0, \dots, m-1$;

\bigskip
\noindent
8. $\ord(\sigma) \mid (q + 3q_0 + 1)$, $\iota(\sigma \tau^k) = 0$ for all $k = 0, \dots, m-1$;

\bigskip
\noindent
9. $\ord(\sigma) \mid (q - 3q_0 + 1)$, $\iota(\sigma) = 0$, $\iota(\sigma \tau^j) = m$ for exactly six distinct $j \in \{ 1, \dots, m-1 \}$ and $\iota(\sigma \tau^j) = 0$ for all other $j$ between $1$ and $m-1$.
\end{theorem}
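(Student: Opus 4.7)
Only case 9 calls for a genuinely new argument, since cases 1--8 are precisely as in \cite{GMQZ}, Theorem 48. The plan is to transcribe the proof of Theorem \ref{theorem_26} to the Ree setting, with the function $w_8$ (after an appropriate $\mathbb F_q$-rational coordinate change) playing the role that $\tilde w$ plays in the Suzuki case, and with Lemmas \ref{lem:divisorw8} and \ref{lem:zeroesw8qminusw8} supplying the divisorial and arithmetic inputs that were used there for $\tilde w$.

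Since any cyclic subgroup of order $m$ in $\Ree(q)$ is conjugate to $\Sigma_-$, I would pick a non-trivial $\sigma \in \Sigma_-$. Because $\Sigma_-$ is a Singer-type cyclic subgroup, $\sigma$ fixes a point $P$ of $\mathcal R_q$ which is $\mathbb F_{q^6}$-rational but not rational over any proper subfield, as well as its five $q$-Frobenius conjugates $P^{q^d}$ for $d = 1,\ldots,5$; in the cover $\tilde{\mathcal R}_q \to \mathcal R_q$ these six points sit below six distinct $m$-element $C_m$-orbits $O_0,\ldots,O_5$. Using that $\Aut(\mathcal R_q)$ acts transitively on the $q^3 + 1$ $\mathbb F_q$-rational points of $\mathcal R_q$, a divisor computation gives
\[
\Bigl(\frac{\sigma(x^q - x)}{x^q - x}\Bigr)_{\mathcal R_q} = (q^3 + 1)\bigl(P_\infty - \sigma(P_\infty)\bigr).
\]
Comparing with Lemma \ref{lem:divisorw8} and using $(q^3 + 1)/m = (q + 1)(q + 3q_0 + 1)$, I would construct a function $\tilde w_8$ from $w_8$ by the $\mathbb F_q$-rational translation that sends $P_{(0,0,0)}$ to $\sigma(P_\infty)$, for which the quotient above equals $\tilde w_8^{-m}$. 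It then follows that $\sigma(t) = \gamma t/\tilde w_8$ for some $\gamma$ with $\gamma^m \in \mathbb F_q^*$, and Proposition \ref{prop:rationalliftRee} allows me to take $\gamma \in \mathbb F_q^*$.

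The remainder copies Theorem \ref{theorem_26} with very little change. For $\tilde P \in O_0$ the unique $k$ with $\sigma\tau^k(\tilde P) = \tilde P$ is determined by $\gamma\lambda^k = \tilde w_8(P)$, and this same $k$ fixes all of $O_0$ pointwise. A fixed point of $\sigma\tau^k$ in $O_d$ for $d \in \{1,\ldots,5\}$ would force $\tilde w_8(P)^{q^d} = \tilde w_8(P)$, i.e.\ $\tilde w_8(P) \in \mathbb F_{q^d}$. Combined with $\tilde w_8(P)^m \in \mathbb F_q^*$ and $\gcd(q^d - 1, m) = 1$ for $d = 1,\ldots,5$ (which follows from $m \mid q^2 - q + 1$ by elementary manipulations using $q \equiv 0 \Mod{3}$), this forces $\tilde w_8(P) \in \mathbb F_q$, and then Lemma \ref{lem:zeroesw8qminusw8}, applied after undoing the $\mathbb F_q$-rational translation, forces $P$ itself to be $\mathbb F_q$-rational, contradicting the choice of $P$. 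Running the same construction starting from each of the other five Frobenius conjugates of $P$ then yields six distinct exponents $k_0,\ldots,k_5$ with $\iota(\sigma\tau^{k_d}) = m$, and $\iota(\sigma\tau^j) = 0$ for every other $j \in \{0,1,\ldots,m-1\}$.

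The main obstacle I anticipate is making the translation from $w_8$ to $\tilde w_8$ precise enough to confirm that Lemma \ref{lem:zeroesw8qminusw8} transfers to $\tilde w_8$ without loss, that is, that the image of the translated zero locus remains $\mathbb F_q$-rational; this should reduce to the fact that $\sigma(P_\infty)$ is itself $\mathbb F_q$-rational, hence the translation has coefficients in $\mathbb F_q$. The coprimality conditions $\gcd(q^d - 1, m) = 1$ for $d = 1,\ldots,5$ are elementary but have to be checked individually. With these two ingredients in place, the remainder is a faithful paraphrase of the Suzuki argument.
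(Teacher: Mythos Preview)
Your proposal is correct and follows essentially the same route as the paper: reduce to case~9, pick $\sigma\in\Sigma_-\setminus\{\id\}$, use the divisor identity from Lemma~\ref{lem:divisorw8} to write $\sigma(t)=\gamma t/\tilde w$ with $\tilde w$ the image of $w_8$ under an $\mathbb F_q$-rational automorphism of $\mathcal R_q$ fixing $P_\infty$, and then combine $\gcd(q^d-1,m)=1$ with Lemma~\ref{lem:zeroesw8qminusw8} to rule out fixed points in the other orbits. The only cosmetic deviation is that you invoke Proposition~\ref{prop:rationalliftRee} to take $\gamma\in\mathbb F_q^*$, which the paper postpones to Proposition~\ref{prop:sixjs}; for Theorem~\ref{theorem_48} itself only $\gamma^m\in\mathbb F_q^*$ is needed, and your argument in fact only uses that weaker fact.
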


\begin{proof}
Only the statements about $\iota(\sigma \tau^j)$ for $j=1,\dots,m-1$ in the ninth item need a proof, the rest of the theorem being identical to \cite{GMQZ}, Theorem 48.

Let $\sigma \in \Sigma_- \setminus \{\id\}$. Then $\sigma$ fixes an $\mathbb{F}_{q^6}$-rational, not $\mathbb{F}_q$-rational, point $P$ of the Ree curve $\mathcal{R}_q$ with certain affine coordinates
$(x(P),y(P),z(P))=(a,b,c)$. Further, $\sigma$ maps $P_\infty$, the unique pole of $x$, to an $\mathbb{F}_q$ rational point, say $P_{(\alpha_1,\beta_1,\gamma_1)}$, having affine coordinates $(\alpha_1,\beta_1,\gamma_1).$
By Proposition \ref{prop:rationalliftRee}, we know that $\sigma \in \Ree(q)$ can be lifted uniquely to an element in $\Aut(\tilde{\mathcal R}_q)$ defined over $\mathbb{F}_q$, which we denote by $\sigma$ again for convenience.
First of all, using Lemma \ref{lem:divisorw8}, one shows similarly as in the proof of Theorem \ref{theorem_26} that
$$\left( \frac{\sigma(x^q+x)}{x^q+x}\right)_{\mathcal{R}_q}=(q^3+1)(P_\infty-P_{(\alpha_1,\beta_1,\gamma_1)})=\big( \tilde{w}^{-m}\big)_{\mathcal{R}_q}.$$
where $\tilde{w}:=\omega(w_8)$ and $\omega \in \Aut(\mathcal R_q)$ is an element such that $\omega(P_\infty)=P_\infty$ and $\omega(P_{(0,0,0)})=P_{(\alpha_1,\beta_1,\gamma_1)}.$ Note that $\omega$ exists, since $\Ree(q)$
acts $2$-transitive on the set of $\mathbb{F}_q$-rational points of $\mathcal R_q$.
We may conclude that $\sigma(t)=\gamma t/\tilde{w},$ for some $\gamma \in \mathbb{F}^*_{q^6}$ and that for all $k=0,\ldots,m-1$,
$$\sigma\tau^k(x)=\sigma(x), \ \sigma\tau^k(y)=\sigma(y), \ \sigma\tau^k(t)=\gamma \lambda^k \frac{t}{\tilde{w}},$$
where $\lambda \in \mathbb{F}^*_{q^6}$ is an element of multiplicative order $m$.

Now denote for $i=0,1,\dots,5$ by $P_i$ the point of $\mathcal R_q$ with affine coordinates $(a^{q^i},b^{q^i},c^{q^i})$ and let $O_i$ be the set of points lying above $P_{i}$ in the cover
$\tilde{\mathcal{R}}_q \to \mathcal{R}_q$. Note $P_0=P$. Similarly as in the Suzuki case, if $\tilde{P} \in O_0$ is a point lying above $P$ then there exists a unique $k$ between $1$ and $m-1$ such that
$\sigma \circ \tau^k$ fixes $\tilde{P}$ implying that $\gamma \lambda^k=\tilde{w}(a,b,c)$ and hence that $\sigma\tau^k$ fixes the orbit $O_0$ point-wise.
To show that none of the points in the remaining orbits $O_i$ are fixed by $\sigma\tau^k$ if equivalent to showing that $\tilde{w}(a,b,c) \not\in \mathbb{F}_{q^i}$.
Since for $i=1,\dots,5$, one has $\gcd(q^i-1,m)=1$, we see that $\tilde{w}(a,b,c) \in \mathbb{F}_{q^i}$ implies that $\tilde{w}(a,b,c) \in \mathbb{F}_{q}$.
However, by Lemma \ref{lem:zeroesw8qminusw8} this cannot occur.
We may conclude that $\iota(\sigma \tau^k)=m$. Starting with a point in one of the other orbits $O_i$, one can similarly find a unique $k$, a different one for each orbit, such that $\iota(\sigma \tau^k)=m$.
\end{proof}

As in the Suzuki case, we can supplement this with the following result:

\begin{proposition}\label{prop:sixjs}
Let $\sigma$ be the $\mathbb{F}_q$-rational lift of an element of $\Aut(\mathcal R_q)$ of order $q - 3q_0 + 1$.
Then there exists a choice of the generator $\tau$ of $C_m$ such that the six values of $j$ for which $\iota(\sigma \tau^j) = m$ are $q^d \bmod{m}$ for $d = 0,1,2,3,4,5$.
\end{proposition}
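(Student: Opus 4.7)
The plan is to transcribe the argument of Proposition \ref{prop:fourjs} into the Ree setting, since the key structural ingredients are already in place: the proof of Theorem \ref{theorem_48} has identified six orbits $O_0,\dots,O_5$ of the cover $\tilde{\mathcal R}_q\to\mathcal R_q$ above the $q$-Frobenius conjugates of the fixed point $P$ of $\sigma$, and has shown that each orbit contributes a unique exponent $k\in\{1,\dots,m-1\}$ with $\iota(\sigma\tau^k)=m$. The goal is to identify those six exponents explicitly as $q^d\bmod m$ for $d=0,\dots,5$, after a suitable choice of generator $\tau$.

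First, from the proof of Theorem \ref{theorem_48} we have $\sigma(t)=\gamma t/\tilde{w}$ with $\gamma\in\mathbb{F}_{q^6}^*$ and $\tilde{w}=\omega(w_8)$. The function $\tilde{w}$ is defined over $\mathbb{F}_q$, because $w_8$ is a polynomial in $x,y,z$ over $\mathbb{F}_q$ and the element $\omega\in\Ree(q)$ used to send $P_{(0,0,0)}$ to the $\mathbb{F}_q$-rational point $P_{(\alpha_1,\beta_1,\gamma_1)}$ may be chosen to act over $\mathbb{F}_q$. Since $\sigma$ is assumed to be the $\mathbb{F}_q$-rational lift, Proposition \ref{prop:rationalliftRee} forces $\sigma$ to act on $t$ by multiplication by an $\mathbb{F}_q$-rational function on $\mathcal R_q$; combined with the rationality of $\tilde{w}$, this yields $\gamma\in\mathbb{F}_q^*$.

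Second, let $k\in\{1,\dots,m-1\}$ be the unique exponent such that $\sigma\tau^k$ fixes $O_0$ pointwise; from the proof of Theorem \ref{theorem_48}, $\gamma\lambda^k=\tilde{w}(a,b,c)$. I would check that $\gcd(k,m)=1$: otherwise, writing $i=\gcd(k,m)>1$, the element $(\sigma\tau^k)^{m/i}=\sigma^{m/i}\neq\id$ would fix $O_0$ pointwise, contradicting $\iota(\sigma^{m/i})=0$ from item 9 of Theorem \ref{theorem_48}. Hence $\overline{\tau}:=\tau^k$ is still a generator of $C_m$, and after redefining $\tau$ as $\overline{\tau}$ and $\lambda$ as $\lambda^k$ we may assume $\gamma\lambda=\tilde{w}(a,b,c)$. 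For each $d\in\{1,\dots,5\}$, let $k_d$ denote the exponent with $\sigma\tau^{k_d}$ fixing $O_d$ pointwise; then $\gamma\lambda^{k_d}=\tilde{w}(a^{q^d},b^{q^d},c^{q^d})=\tilde{w}(a,b,c)^{q^d}$, since $\tilde{w}$ is $\mathbb{F}_q$-rational. Substituting $\tilde{w}(a,b,c)=\gamma\lambda$ and using $\gamma^{q-1}=1$ yields
\[
\lambda^{k_d}=\gamma^{-1}(\gamma\lambda)^{q^d}=\gamma^{q^d-1}\lambda^{q^d}=\lambda^{q^d},
\]
so $k_d\equiv q^d\pmod m$, and together with $k_0=1\equiv q^0\pmod m$ we recover all six values.

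The only delicate point is establishing $\gamma\in\mathbb{F}_q^*$, which requires unwinding the explicit form of the $\mathbb{F}_q$-rational lift from Proposition \ref{prop:rationalliftRee} in combination with the $\mathbb{F}_q$-rationality of $\tilde{w}$. Once this is in hand, the rest of the proof is a direct transcription of the Suzuki argument, with the powers $q^0,\dots,q^3$ replaced by $q^0,\dots,q^5$.
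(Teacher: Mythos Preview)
Your proposal is correct and follows exactly the approach the paper takes: the paper's proof simply states that it is ``completely similar to that of Proposition \ref{prop:fourjs},'' and you have carried out precisely that transcription, replacing the four Frobenius orbits by six and verifying the key point $\gamma\in\mathbb{F}_q^*$ via the $\mathbb{F}_q$-rationality of both the lift and $\tilde{w}$.
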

\begin{proof}
The proof is completely similar to that of Proposition \ref{prop:fourjs}.
\end{proof}

\begin{remark}\label{rem:genusok2}
As in Remark \ref{rem:genusok}, one can show that the formulation of \cite{GMQZ}, Theorem 48 does not affect the genus computations carried out
in \cite{GMQZ}. In particular, Propositions 65, 66, 67, 68, and 72 from \cite{GMQZ} are correct.
\end{remark}

We now consider the two cases not fully treated in \cite{GMQZ} in the following two subsections, namely all subgroups of $N_- \times C_m$ and all subgroups of $\Ree(3) \times C_m$.

\subsection{Subgroups of $N_- \times C_m$}

The normalizer $N_-$ of a Singer cycle $\Sigma_-$ has order $6m$ and is isomorphic to $C_m \rtimes C_6$, where the semidirect product is
defined by the homomorphism $\varphi : C_6 \to \Aut(C_m)$ mapping $\zeta$, a fixed generator of $C_6$,
to the automorphism $\omega \mapsto \zeta \omega \zeta^{-1} = \omega^{q}$. See \cite{HB}, Theorem 13.2, Chapter XI and \cite{CO}, Proposition 4.13 for details.
The group $N_- \times C_m$ is therefore isomorphic to $(C_m \rtimes C_6) \times C_m$ and can be presented as
$$\langle \zeta, \sigma, \tau \mid \ord(\zeta) = 6, \ord(\sigma) = \ord(\tau) = m, \zeta \sigma \zeta^{-1} = \sigma^{q}, \zeta \tau = \tau \zeta, \sigma \tau = \tau \sigma \rangle.$$

It is easy to see that all elements of order two in $N_- \times C_m$ are those of the form $\sigma^i\zeta^3$, while the elements of order three are those of the form $\sigma^i\zeta^2$ or $\sigma^i\zeta^4$.
Finally the elements of order six are those of the form $\sigma^i\zeta$ or $\sigma^i\zeta^5$.

To find out which subgroups of $N_- \times C_m$ have not been treated in \cite{GMQZ} yet, we give the following analogue of
Proposition \ref{prop:subgroupsNminxCm}.

\begin{proposition}\label{prop:subgroups2}
Let $H$ be a subgroup of $N_- \times C_m$. Then there exists divisors $n_1$ and $n_2$ of $m$ such that one of the following holds:
\begin{enumerate}
\item $H \subseteq \Sigma_- \times C_m$,
\item $H$ is conjugated to $\langle \sigma^{m/n_1},\tau^{m/n_2},\zeta^3 \rangle \cong (C_{n_1} \rtimes C_2)\times C_{n_2}$,
\item $H$ is conjugated to $\langle \sigma^{m/n_1},\tau^{m/n_2},\zeta^2 \rangle \cong (C_{n_1} \rtimes C_3)\times C_{n_2}$, or
\item $H$ is conjugated to $\langle \sigma^{m/n_1},\tau^{m/n_2},\zeta \rangle \cong (C_{n_1} \rtimes C_6)\times C_{n_2}$.
\end{enumerate}
\end{proposition}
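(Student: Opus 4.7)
The plan is to follow the strategy used for Proposition \ref{prop:subgroupsNminxCm} in the Suzuki case, replacing the quotient $C_4$ by $C_6$ and handling the extra intermediate subgroup $\langle \zeta^2 \rangle$ of order three. First I would observe that $\Sigma_- \times C_m$ is a normal subgroup of $N_- \times C_m$ with cyclic quotient $\langle \overline{\zeta} \rangle \cong C_6$. Applying the natural projection $\phi : H \to (N_- \times C_m)/(\Sigma_- \times C_m) \cong C_6$, the image $\phi(H)$ is a cyclic subgroup of $C_6$, so $[H:H\cap(\Sigma_-\times C_m)]\in\{1,2,3,6\}$, giving four cases that correspond to the four items in the proposition.

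Next I would invoke the Schur--Zassenhaus theorem to find a complement $K$ of $H\cap(\Sigma_-\times C_m)$ in $H$. This requires $\gcd(m,6)=1$, which holds because $m=3q_0^2-3q_0+1\equiv 1\pmod{3}$ and $m$ is odd (as $q_0$ is odd). So $K$ is cyclic of order $6/d'$ for some $d'\mid 6$ and its generator has the form $\sigma^i\zeta^{d}$ with $d\in\{6,3,2,1\}$ corresponding to the four cases (where $d=6$ means $K=\{\id\}$, hence $H\subseteq \Sigma_-\times C_m$). For the remaining cases I would conjugate this generator into $\langle \zeta\rangle$ by computing, exactly as in the proof of Proposition \ref{prop:subgroupsNminxCm},
\[\sigma^{-j}(\sigma^i \zeta^d)\sigma^{j}=\sigma^{i+j(q^d-1)}\zeta^d,\]
and solving $j\equiv -i/(q^d-1)\pmod{m}$. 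This uses the coprimality $\gcd(q^d-1,m)=1$ for $d\in\{1,2,3\}$, which follows easily from $m(q+3q_0+1)=q^2-q+1$ and the identity $\gcd(q^d-1,q^2-q+1)=1$ that can be verified for $d=1,2,3$ using the Euclidean algorithm and the facts that $q$ is odd and $3\mid q$.

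After this conjugation I may assume $\zeta^d\in H$ with $d\in\{3,2,1\}$ respectively. For an arbitrary element $\sigma^i\tau^j\zeta^e\in H$, the commutator computation
\[H\ni \zeta^d(\sigma^i\tau^j)\zeta^{-d}(\sigma^i\tau^j)^{-1}=\sigma^{i(q^d-1)}\]
together with $\gcd(q^d-1,m)=1$ and $\ord(\sigma)=m$ yields $\sigma^i\in H$, and hence $\tau^j\in H$ and $\zeta^e\in H$ (so $e$ is a multiple of $d$). Defining $n_1$ so that $\langle\sigma^{m/n_1}\rangle=H\cap\langle\sigma\rangle$ and $n_2$ so that $\langle\tau^{m/n_2}\rangle=H\cap\langle\tau\rangle$, the group $H$ is precisely $\langle \sigma^{m/n_1},\tau^{m/n_2},\zeta^d\rangle$, which is isomorphic to $(C_{n_1}\rtimes C_{6/d})\times C_{n_2}$ since $\zeta$ acts on $\sigma$ by $\sigma\mapsto \sigma^q$ and commutes with $\tau$.

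The main technical obstacle is confirming the coprimality conditions $\gcd(q^d-1,m)=1$ for $d=1,2,3$; everything else is a direct transcription of the Suzuki argument. A minor subtlety is that for $d=2$ the complement $K$ has order three, which was absent in the Suzuki setting, but the conjugation argument is formally identical, so no new idea is required.
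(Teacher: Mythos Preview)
Your proposal is correct and follows essentially the same approach as the paper: use Schur--Zassenhaus to split off a cyclic complement of order dividing $6$, conjugate that complement into $\langle\zeta\rangle$, and then use the commutator $[\zeta^d,\,\cdot\,]$ together with $\gcd(q^d-1,m)=1$ to force $\sigma^i,\tau^j\in H$ separately. One small expository point: in your commutator step you write $\zeta^d(\sigma^i\tau^j)\zeta^{-d}(\sigma^i\tau^j)^{-1}$ as though $\sigma^i\tau^j$ were already known to lie in $H$; strictly speaking you should either first observe that $d\mid e$ (from the image in $C_6$) and multiply off $\zeta^e$, or note that the commutator $[\zeta^d,\sigma^i\tau^j\zeta^e]$ equals $\sigma^{i(q^d-1)}$ anyway since $\zeta^e$ is central in $\langle\zeta\rangle$---either fix is immediate.
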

\begin{proof}
Let $H$ be a subgroup of $N_-\times C_m$. Just as in the proof of Proposition \ref{prop:subgroupsNminxCm}, the Schur--Zassenhaus theorem
implies that $H \cap (\Sigma_-\times C_m)$ has a complement $K$ in $H$, which is isomorphic to $H/(H \cap (\Sigma_-\times C_m))$.
Then four cases can be distinguished and dealt with similarly as in Proposition \ref{prop:subgroupsNminxCm}.

Case 1, $[H:(H \cap (\Sigma_-\times C_m))]=1$. In this case $H=H \cap (\Sigma_-\times C_m)$ and hence $H \subseteq \Sigma_- \times C_m.$

Case 2, $[H:(H \cap (\Sigma_-\times C_m))]=2$. In this case the complement $K$ contains an element $\sigma^i \zeta^3$ of order two and
since $\sigma^{-j}(\sigma^i \zeta^3)\sigma^{j}=\sigma^{i+j(q^3-1)}\zeta^3.$ Since $\gcd(q^3-1,m)=1$, we can choose $j$ such that $\sigma^{-j}(\sigma^i \zeta^3)\sigma^{j}=\zeta^3$.
Hence replacing $H$ by a suitable conjugate, we may assume that $\zeta^3$ is an element of $H$.
If $\sigma^i\tau^j \in H$, then
\[H \ni \zeta^3(\sigma^i\tau^j)\zeta^3(\sigma^i\tau^j)^{-1}=\zeta^3\sigma^{i}\zeta^3\sigma^{-i}=\sigma^{i(q^3-1)}.\]
Since $\gcd(q^3-1,m)=1$ and $\ord(\sigma)=m$, this implies that $\sigma^i \in H.$ Hence whenever $\sigma^i\tau^j\zeta^e \in H$, with $e=0,3$,
then $\sigma^i \in H$ and hence $\tau^j \in H.$ This shows that $H$ is of the form as in case two of the proposition.

Case 3, $[H:(H \cap (\Sigma_-\times C_m))]=3$. Can be handled in a similar way.

Case 4, $[H:(H \cap (\Sigma_-\times C_m))]=6$. Can be handled in a similar way.
\end{proof}

This proposition shows that the only subgroups of $N_- \times C_m$ not considered in \cite{GMQZ}, are contained in $\Sigma_- \times C_m$.
In the following, we determine the genus of $\tilde{R}_q/H$ for all possible subgroups of $\Sigma_- \times C_m$.

\begin{theorem}
  \label{theorem:genus_cm_times_cm_Ree}
  Let $H$ be a subgroup of $\Sigma_- \times C_m = \langle \sigma,\tau \rangle$ with standard exponents $(n_1,n_2,a)$. Suppose that $m=p_1^{e_1}\cdots p_r^{e_r}$,
  with $p_1,\dots,p_r$ mutually distinct prime numbers and $e_1,\dots,e_r$ positive integers.
  For $d \in \{0,1,2,3,4,5\}$, write $\nu_{d,\ell}=\min\{v_{p_\ell}(n_1q^d-a),v_{p_\ell}(n_2)\}.$
  The genus of the quotient curve $\tilde{R}_q/H$ is
  $$ g_H = \frac{(q^3+1)(q-2) - \Delta_H}{2 |H|} + 1, $$
  with $|H| = m^2/(n_1n_2)$ and
  $$ \Delta_H = \left( \frac{m}{n_2} - 1 \right) \cdot (q^3 + 1) + \sum_{d=0}^5 \left( \frac{m\prod_{\ell=1}^r p_\ell^{\nu_{d,\ell}}}{n_1n_2} - 1 \right) \cdot m. $$
\end{theorem}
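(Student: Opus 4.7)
The plan is to mimic the proof of Theorem \ref{theorem:genus_cm_times_cm_Sz} almost verbatim, replacing the four values $d \in \{0, 1, 2, 3\}$ appearing there by the six values $d \in \{0, 1, 2, 3, 4, 5\}$ dictated by Proposition \ref{prop:sixjs}, and replacing the Suzuki constant $q^2 + 1$ by the Ree analogue $q^3 + 1$. The genus formula is an immediate consequence of the Riemann--Hurwitz formula applied to $\tilde{\mathcal R}_q \to \tilde{\mathcal R}_q/H$, once $|H| = m^2/(n_1 n_2)$ (a direct consequence of Lemma \ref{lem:triple}) and $\Delta_H$ are known.

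To compute $\Delta_H = \sum_{\omega \neq \id} \iota(\omega)$, I would write each element of $H$ uniquely in the form $\sigma^{in_1} \tau^{ia + jn_2}$ with $0 \le i < m/n_1$ and $0 \le j < m/n_2$. By Theorem \ref{theorem_48} item 9, all $m/n_1 - 1$ non-identity pure powers of $\sigma$ contribute zero to $\Delta_H$, while by Theorem \ref{theorem_48} each of the $m/n_2 - 1$ non-identity pure powers of $\tau$ contributes $q^3 + 1$, yielding the first term of the claimed formula. For the remaining mixed elements (those with $i \ge 1$), Theorem \ref{theorem_48} together with Proposition \ref{prop:sixjs} gives, after an appropriate choice of generator of $C_m$, that $\iota(\sigma^\alpha \tau^\beta) = m$ precisely when
$$ j n_2 \equiv i (n_1 q^d - a) \pmod{m} $$
for some $d \in \{0, 1, 2, 3, 4, 5\}$, and $\iota = 0$ otherwise. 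I would note that the six residues $q^d \bmod m$ are pairwise distinct (otherwise item 9 of Theorem \ref{theorem_48} could not produce exactly six values of $j$), so that the six cases classify disjoint sets of mixed elements and no double-counting can occur.

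The counting of solutions of the above congruence then proceeds exactly as in the Suzuki case: factoring $m = p_1^{e_1} \cdots p_r^{e_r}$ and applying the Chinese remainder theorem together with a local analysis at each prime power $p_\ell^{e_\ell}$ (where $\nu_{d,\ell}$ records the common $p_\ell$-adic obstruction between $n_2$ and $n_1 q^d - a$) yields $m \prod_\ell p_\ell^{\nu_{d,\ell}}/(n_1 n_2)$ solutions in the fundamental domain for each $d$; subtracting $1$ to discard the trivial solution $(i,j) = (0,0)$ and weighting by $m$ gives the contribution of each $d$ to $\Delta_H$. I do not anticipate any substantive obstacle, since the only work not already carried out in the Suzuki proof is the numerical replacement of $4$ by $6$ and of $q^2 + 1$ by $q^3 + 1$; the rest of the combinatorial argument transfers without modification.
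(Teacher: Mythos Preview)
Your proposal is correct and follows exactly the approach the paper takes: the paper's own proof consists of the single sentence ``The proof is very similar to the proof of Theorem \ref{theorem:genus_cm_times_cm_Sz} and is therefore omitted,'' and your outline carries out precisely that transfer, replacing Theorem \ref{theorem_26} and Proposition \ref{prop:fourjs} by their Ree analogues Theorem \ref{theorem_48} and Proposition \ref{prop:sixjs}, and the constants $q^2+1$ and $d\in\{0,1,2,3\}$ by $q^3+1$ and $d\in\{0,\dots,5\}$. Your remark on the pairwise distinctness of the six special residues (ensuring no double-counting across the sum over $d$) is a helpful explicit check that the paper leaves implicit even in the Suzuki proof; note only that what is really needed is the distinctness of $in_1 q^d \bmod m$ for each fixed $i$ with $1\le i<m/n_1$, which follows from item 9 of Theorem \ref{theorem_48} applied to $\sigma^{in_1}$ rather than to $\sigma$ itself.
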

\begin{proof}
The proof is very similar to the proof of Theorem \ref{theorem:genus_cm_times_cm_Sz} and is therefore omitted.
\end{proof}


The following genus, obtained using Theorem \ref{theorem:genus_cm_times_cm_Ree} for $s = 1$, is new up to our knowledge.

\begin{table}[h]
  \centering
  \begin{tabularx}{\textwidth}{llX}
    \hline
    s & Field & Genus \\
    \hline
    1 & $\mathbb{F}_{3^{18}}$ & 12942 \\
    \hline
  \end{tabularx}
  \caption{New genera from Theorem \ref{theorem:genus_cm_times_cm_Ree} for $s=1$.}
  \label{tab:cm_times_cm_Ree}
\end{table}

\subsection{Subgroups of $\Ree(3) \times C_m$}

Up to conjugation, $\Ree(3)$ has four maximal subgroups: $\PSL(2, 8)$, of order $504$, the normalizer $N_2$ of a Sylow 2-subgroup, with $|N_2| = 168$,
the Frobenius group $\hat{F}$ of order $54$ and the subgroup $\hat{N}_+$ of order $42$ normalizing a cyclic Singer group of order $7$.
In the following, we compute the genera of quotient curves $\tilde{R}_q/H$ when $H$ is a subgroup of $\Ree(3) \times C_m$. Some special cases have been already covered in \cite{GMQZ}, precisely the cases of subgroups $H$ of
$\Ree(3) \times C_m$ of the form $H = K \times C_n$, with $K = \Ree(3)$, $K$ subgroup of $\hat{F}$ or $K$ subgroup of $\hat{N}_+$ and $C_n$ subgroup of $C_m$. \\
We first complete the study of subgroups of $\Ree(3) \times C_m$ of the form $H = K \times C_n$, with $K$ a subgroup of $\Ree(3)$ and $C_n$ subgroup of $C_m$; we call these subgroups \textit{non-skew} subgroups of $\Ree(3) \times C_m$. Then, we will consider subgroups of $\Ree(3) \times C_m$ that are not direct product of a subgroup of $\Ree(3)$ and a subgroup of $C_m$; we call these subgroups \textit{skew} subgroups of $\Ree(3) \times C_m$. \\

We start by computing the genus of the quotient curve $\tilde{\mathcal{R}}_q/H$ when $H$ is a non-skew subgroup of $\Ree(3) \times C_m$ that has not been already considered in \cite{GMQZ}. According to Theorem \ref{theorem:classification_subgroups_Ree}, it remains to study the case of $H = \PSL(2, 8) \times C_n$ with $n$ dividing $m$ and the case of $H = K \times C_n$, with $K$ subgroup of $N_2$ and $n$ dividing $m$.

\begin{theorem}\label{theorem:PSL28}
  Let $H = \PSL(2, 8) \times C_n$, with $n$ dividing $m$. Then the genus of the quotient curve $\tilde{R}_q/H$ is
  $$ g_H = \frac{(q^3+1)(q-2) - \Delta_H}{1008 n} + 1, $$
  with
  $$ \Delta_H = 63nq + 56m(q+3q_0+4) + 287n + 216 (\gcd(7, n) - 1)m + (n-1)(q^3+1). $$
\end{theorem}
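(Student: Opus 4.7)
The plan is to apply the Riemann--Hurwitz formula with $|H|=504n$, which reduces the theorem to computing $\Delta_H=\sum_{\omega\ne\id}\iota(\omega)$. I would partition $H\setminus\{\id\}$ by projection to $\PSL(2,8)$: the $n-1$ elements of $\{\id\}\times(C_n\setminus\{\id\})$ each contribute $q^3+1$ by Theorem \ref{theorem_48}, giving $(n-1)(q^3+1)$. For the remaining elements, I classify $\sigma\in\PSL(2,8)\setminus\{\id\}$ by order, using the standard count that $\PSL(2,8)$ contains $63$ involutions, $56$ elements of order $3$, $216$ of order $7$, and $168$ of order $9$. For each such $\sigma$, I sum $\iota(\sigma\tau^{(m/n)k})$ over $k=0,\dots,n-1$ by invoking the appropriate case of Theorem \ref{theorem_48}.

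The easy orders are $2$ (case~$4$, total contribution $63n(q+1)$) and $9$ (case~$3$, each $\sigma$ giving $(m+1)+(n-1)=m+n$, hence $168(m+n)$ in total). For order $3$ I claim that all $56$ elements fall into case~$1$, i.e.\ they are central in a Sylow $3$-subgroup of $\Ree(q)$. The argument proceeds in two steps. First, a Sylow $3$-subgroup of $\PSL(2,8)$ is cyclic of order $9$ and is contained in a Sylow $3$-subgroup $P_{27}$ of $\Ree(3)$, which is the unique nonabelian group of order $27$ with exponent $9$; a direct computation in $P_{27}$ (say in the presentation $\langle a,b\mid a^9=b^3=1,\ bab^{-1}=a^4\rangle$) shows that every cyclic subgroup of order $9$ contains $Z(P_{27})=\langle a^3\rangle$. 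Second, the inclusion $\Ree(3)\subseteq\Ree(q)$ sends $P_{27}$ into a Sylow $3$-subgroup $P_{q^3}$ of $\Ree(q)$; using the standard triples model in which $Z(P_{q^3})\cong(\mathbb{F}_q,+)$ and $Z(P_{27})$ corresponds to its $\mathbb{F}_3$-points, this inclusion sends $Z(P_{27})$ into $Z(P_{q^3})$. Hence each such $\sigma$ contributes $m(q+3q_0+1)+n$, for a total of $56\,[m(q+3q_0+1)+n]$.

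For order $7$, if $7\nmid m$ the element $\sigma$ falls into case~$7$ or case~$8$ (never case~$6$, since $3^{2s+1}\not\equiv 1\pmod 7$ for any $s\ge 1$), giving $\iota(\sigma\tau^k)=0$ for every $k$. If $7\mid m$ then $\sigma$ lies in a Singer cycle $\Sigma_-\subseteq\Ree(q)$ and case~$9$ applies. The key claim here is that the six values $j\in\{1,\dots,m-1\}$ with $\iota(\sigma\tau^j)=m$ are exactly $\{m/7,2m/7,\dots,6m/7\}$. This is a small variation on Proposition \ref{prop:sixjs}: since $\sigma$ has order $7$, its action on each of the six orbits $O_0,\dots,O_5$ lying above the Frobenius orbit of a fixed point of $\sigma$ on $\mathcal R_q$ is by an element of $\langle\tau\rangle$ of order exactly $7$, which forces the corresponding $k_i$ to satisfy $\gcd(k_i,m)=m/7$; since the six $k_i$ are distinct they must exhaust $\{m/7,\dots,6m/7\}$. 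These six $j$'s lie in $\{(m/n)i:1\le i\le n-1\}$ if and only if $7\mid n$, giving $216\cdot 6m$ when $7\mid n$ and $0$ otherwise, uniformly $216(\gcd(7,n)-1)m$. Summing the five contributions and using $56m(q+3q_0+1)+168m=56m(q+3q_0+4)$ together with $63n+56n+168n=287n$ produces the stated formula for $\Delta_H$, after which the expression for $g_H$ follows from Riemann--Hurwitz.

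The main obstacle is the order-$7$ analysis, specifically identifying the six special $j$'s with $\{m/7,2m/7,\dots,6m/7\}$, which requires adapting the proof of Proposition \ref{prop:sixjs} from elements of order $m$ to elements of order $7$. The order-$3$ claim is more routine but hinges on recognizing the tower $C_9\subseteq P_{27}\subseteq P_{q^3}$ of 3-groups and tracking centers carefully.
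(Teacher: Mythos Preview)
Your proof is correct and follows essentially the same strategy as the paper: partition $H\setminus\{\id\}$ according to the order of the projection to $\PSL(2,8)$, use the element counts $1,63,56,216,168$ for orders $1,2,3,7,9$, and read off each contribution from Theorem~\ref{theorem_48}. Your justification that the order-$3$ elements are central in a Sylow $3$-subgroup via the chain $C_9\subset P_{27}\subset P_{q^3}$ is equivalent to the paper's one-line remark that these elements are cubes of order-$9$ elements (with a citation to \cite{LN}); both lead to case~1. Where you go beyond the paper is in the order-$7$ analysis: the paper simply writes the term $216(\gcd(7,n)-1)m$ without explanation, whereas you actually prove that when $7\mid m$ the six special exponents for $\sigma$ of order $7$ are exactly $\{m/7,\dots,6m/7\}$, by observing that $(\sigma\tau^{k_d})^7=\tau^{7k_d}$ must act trivially on $O_d$ so $m\mid 7k_d$, while $k_d\not\equiv 0$ since $\iota(\sigma)=0$. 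That argument is sound and fills a genuine gap in the paper's exposition; the remaining arithmetic matches line by line.
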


\begin{proof}
Using GAP, one obtains that $\PSL(2, 8)$ has one element of order $1$, 63 elements of order $2$, 56 elements of order $3$, 216 elements of order $7$ and 168 elements of order $9$. Moreover, its elements of order $3$ are cubes of elements of order 9, hence they lie in the center of a Sylow $3$-subgroup (see the proof of Lemma 3 in \cite{LN}). From Theorem \ref{theorem_48}:
  \begin{align*}
    \Delta_H = & 1 \cdot (n-1)(q^3+1) + 63 \cdot n(q+1) + 56 \cdot (m(q+3q_0+1)+1 + (n-1)) + \\
               & 216 \cdot (\gcd(7, n) - 1)m + 168 \cdot ((m+1) + (n-1)).
  \end{align*}
  The conclusion follows from the Riemann--Hurwitz formula.
\end{proof}

The lattice of conjugacy classes of subgroups of $N_2$ is represented in Figure \ref{fig:lattice_n2_times_cm};
each conjugacy class $\mathcal{H}_i$ in the figure is a class of subgroups of $N_2$ of order $i$.
In particular, $\mathcal{H}_{168}$ and $\mathcal{H}_1$ have size $1$ and contain $N_2$ and $\{ \id \}$ respectively. The figure was computed using GAP.

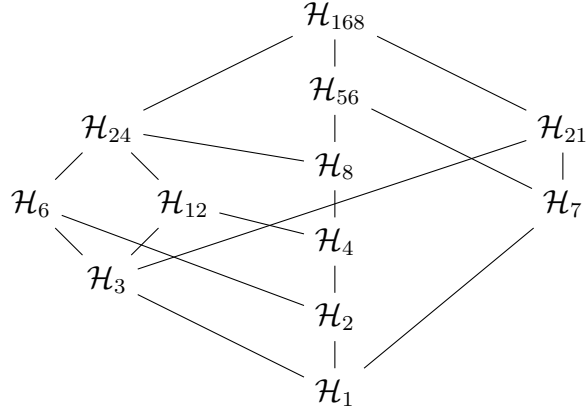
\begin{figure}[h]
  \centering
  \begin{tikzpicture}
    \node at (0, 0) (H1) {$\mathcal{H}_1$};
    \node at (0, 5) (H168) {$\mathcal{H}_{168}$};
    \node at (0, 1) (H2) {$\mathcal{H}_2$};
    \node at (0, 2) (H4) {$\mathcal{H}_4$};
    \node at (0, 3) (H8) {$\mathcal{H}_8$};
    \node at (0, 4) (H56) {$\mathcal{H}_{56}$};
    \node at (-3, 1.5) (H3) {$\mathcal{H}_3$};
    \node at (-4, 2.5) (H6) {$\mathcal{H}_6$};
    \node at (-2, 2.5) (H12) {$\mathcal{H}_{12}$};
    \node at (-3, 3.5) (H24) {$\mathcal{H}_{24}$};
    \node at (3, 3.5) (H21) {$\mathcal{H}_{21}$};
    \node at (3, 2.5) (H7) {$\mathcal{H}_7$};
    \draw[] (H1) -- (H2);
    \draw[] (H2) -- (H4);
    \draw[] (H4) -- (H8);
    \draw[] (H8) -- (H56);
    \draw[] (H56) -- (H168);
    \draw[] (H1) -- (H3);
    \draw[] (H3) -- (H6);
    \draw[] (H3) -- (H12);
    \draw[] (H6) -- (H24);
    \draw[] (H12) -- (H24);
    \draw[] (H24) -- (H168);
    \draw[] (H1) -- (H7);
    \draw[] (H7) -- (H21);
    \draw[] (H21) -- (H168);
    \draw[] (H56) -- (H7);
    \draw[] (H24) -- (H8);
    \draw[] (H21) -- (H3);
    \draw[] (H12) -- (H4);
    \draw[] (H6) -- (H2);
  \end{tikzpicture}
  \caption{Lattice of conjugacy classes of subgroups of $N_2$.}
  \label{fig:lattice_n2_times_cm}
\end{figure}

Subgroups in class $\mathcal{H}_{21}$ or in class $\mathcal{H}_6$ are also subgroups of $\hat{N}_+$.
Thus, only the cases of $K$ a subgroup of $N_2$ in class $\mathcal{H}_{168}$, $\mathcal{H}_{56}$, $\mathcal{H}_{24}$, $\mathcal{H}_{12}$, $\mathcal{H}_8$ or $\mathcal{H}_4$ are left.

\begin{theorem}\label{theorem:N2nonskew}
  Let $H = K \times C_n$, with $K$ a subgroup of $N_2$ of order $168$, $56$, $24$, $12$, $8$ or $4$ and $n$ dividing $m$. Then the genus of the quotient curve $\tilde{R}_q/H$ is
  $$ g_H = \frac{(q^3+1)(q-2) - \Delta_H}{2n|K|} + 1, $$
  with
\begin{equation*}
  \Delta_H=
  \left\{
    \begin{aligned}
       & 7nq + 56m(3q_0+1)  \\
       & \quad + 119n + 48(\gcd(7, n) - 1)m+(n-1)(q^3+1)   && \text{if } |K| = 168, \\
       & 7n(q+1) + 48(\gcd(7, n) - 1)m + (n-1)(q^3+1) && \text{if }  |K| = 56, \\
       & 7nq + 8m(3q_0+1) + 23n + (n-1)(q^3+1) && \text{if }  |K| = 24, \\
       & 3nq + 8m(3q_0+1) + 11n + (n-1)(q^3+1) && \text{if }  |K| = 12, \\
       & 7n(q+1) + (n-1)(q^3+1) && \text{if }  |K| = 8, \\
       & 3n(q+1) + (n-1)(q^3+1) && \text{if }  |K| = 4. \\
    \end{aligned}
    \right.
\end{equation*}

\end{theorem}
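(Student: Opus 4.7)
The plan is to adapt the argument of Theorem~\ref{theorem:PSL28} to each of the six orders $|K| \in \{168, 56, 24, 12, 8, 4\}$ in turn. First, I would determine the element-order distribution of $K$, either using GAP or from the structural description $N_2 \cong B \rtimes C_3$ in which $B$ is the Borel subgroup of $\PSL(2,8)$ of order $56$. Since $|N_2| = 2^3 \cdot 3 \cdot 7$ and $\Ree(3)$ contains no element of order $4$, only orders in $\{1, 2, 3, 6, 7\}$ occur. The resulting subgroups are $N_2$ itself, $B$ (a Frobenius group), $A_4 \times C_2$, $A_4$, the elementary abelian Sylow $2$-subgroup $C_2^3$, and the Klein four-group; writing the counts of non-identity elements of orders $(2,3,6,7)$ in that order, one finds $(7,56,56,48)$, $(7,0,0,48)$, $(7,8,8,0)$, $(3,8,0,0)$, $(7,0,0,0)$, and $(3,0,0,0)$ respectively.

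Next, I apply Theorem~\ref{theorem_48} element-by-element inside $H = K \times C_n$. The identity of $K$ paired with the $n-1$ non-identity powers of $\tau$ contributes $(n-1)(q^3+1)$. Each order-$2$ element falls under case $4$, contributing $n(q+1)$; each order-$6$ element (case $5$) contributes $n$; each order-$3$ element contributes $m(3q_0+1)+n$ under case $2$. The last assignment requires showing that the order-$3$ elements of $N_2$ are \emph{not} central in any Sylow $3$-subgroup of $\Ree(q)$. Since $3 \nmid |B|$, every order-$3$ element of $N_2$ lies in $N_2 \setminus B \subseteq \Ree(3) \setminus \PSL(2,8)$, whereas the central order-$3$ elements of a Sylow $3$-subgroup of $\Ree(3)$ (and hence of $\Ree(q)$, since the centre of the ambient Sylow $3$ of $\Ree(q)$ meets the Sylow $3$ of $\Ree(3)$ exactly in its centre) are precisely the cubes of the order-$9$ elements of $\PSL(2,8)$ referenced in the proof of Theorem~\ref{theorem:PSL28}; a direct centralizer computation in GAP confirms this.

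The order-$7$ contribution is the main technical obstacle. When $7 \nmid m$ we have $\gcd(7,n)=1$, so the bracketed factor vanishes; this agrees with Theorem~\ref{theorem_48}, since for any $q=3^{2s+1}$ one checks that $7 \nmid q-1$, forcing each order-$7$ element into the $(q+1)$-torus (case $7$, with $\iota=0$). When $7 \mid m$, every order-$7$ element of $\Ree(3) \subseteq \Ree(q)$ is conjugate to one in $\Sigma_-$ and falls under case $9$. Re-running the argument of Proposition~\ref{prop:sixjs} for $\sigma$ of order $7$, one writes $\sigma(t) = \gamma t / \tilde w$ with $\gamma \in \mathbb{F}_q^*$, and the unique $k_0$ for which $\sigma \tau^{k_0}$ fixes $O_0$ point-wise satisfies $\gamma \lambda^{k_0} = \tilde w(a,b,c)$; since $(\sigma \tau^{k_0})^7 = \tau^{7 k_0}$ must also fix $O_0$, one obtains $(m/7) \mid k_0$. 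The remaining five values satisfy $k_d \equiv k_0 q^d \pmod m$, and because $\ord_7(q) = 6$ (from $q^3 \equiv -1 \pmod 7$ whenever $7 \mid q^2 - q + 1$), the set $\{k_d\}$ equals $\{j m/7 : j = 1, \dots, 6\}$, i.e., the non-identity part of the unique subgroup $C_7 \subseteq C_m$. These lie in $C_n = \langle \tau^{m/n} \rangle$ if and only if $7 \mid n$, so each order-$7$ element contributes $6m$ when $7 \mid n$ and $0$ otherwise, matching the factor $(\gcd(7,n)-1)m$.

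Summing the element contributions in each of the six cases yields the claimed expression for $\Delta_H$, and the formula for $g_H$ then follows from the Riemann--Hurwitz formula applied to the cover $\tilde{\mathcal{R}}_q \to \tilde{\mathcal{R}}_q/H$, exactly as in Theorem~\ref{theorem:PSL28}. The order-$7$ analysis above is the principal difficulty; once it and the non-centrality of the order-$3$ elements are settled, the remainder is routine bookkeeping.
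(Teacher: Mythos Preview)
Your proposal is correct and follows essentially the same approach as the paper: determine the distribution of element orders in $K$ (the paper does this via GAP), apply Theorem~\ref{theorem_48} term by term, and conclude with Riemann--Hurwitz. You supply considerably more justification than the paper does---in particular your argument that the six special exponents for an order-$7$ element are exactly the nonzero multiples of $m/7$ (hence lie in $C_n$ iff $7\mid n$) makes explicit what the paper absorbs silently into the factor $(\gcd(7,n)-1)m$; one minor imprecision is that when $7\nmid m$ the order-$7$ elements may land in case~8 ($\ord(\sigma)\mid q+3q_0+1$) rather than case~7, but since both give $\iota=0$ this does not affect the computation.
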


\begin{proof}
  Assume $|K| = 168$ first, so that $K = N_2$; using GAP, we computed that $K$ has one element of order $1$, 7 elements of order $2$, 56 elements of order $3$, 56 elements of order $6$ and 48 elements of order $7$. Moreover, its elements of order $3$ do not lie in the center of a Sylow $3$-subgroup. From Theorem \ref{theorem_48}:
  \begin{align*}
    \Delta_H = & 1 \cdot (n-1)(q^3+1) + 7 \cdot n(q+1) + 56 \cdot (m(3q_0+1)+1 + (n-1)) + \\
               & 56 \cdot n + 48 \cdot (\gcd(7,n)-1)m.
  \end{align*}
  The conclusion follows from the Riemann--Hurwitz formula. The argument for the cases of $K$ having order $56$, $24$, $12$, $8$ or $4$ is similar and is omitted.
\end{proof}

This completes the study of non-skew subgroups of $\Ree(3) \times C_m$. We now turn our attention to skew subgroups. If $7$ does not divide $m$, then
$\gcd(|\Ree(3)|,m)=1$, since $\Ree(3)$ has order $1512 = 2^3 \cdot 3^3 \cdot 7$. Hence in this case no skew subgroups exist.
It is easy to see that $7$ divides $m$ if and only if $s \equiv 2 \pmod{6}$ or $s \equiv 3 \pmod{6}$.
%
%
In the remainder of this subsection, we will assume that $m$ is divisible by $7$. We start by narrowing down where skew subgroups of $\Ree(3) \times C_m$ can be located.

\begin{lemma}\label{lem:nonskew1}
Let $n$ be a divisor of $m$. Any maximal subgroup of $\Ree(3) \times C_n$ is non-skew. The same is true for the maximal subgroups of $\PSL(2,8) \times C_n.$
\end{lemma}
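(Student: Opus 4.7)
The plan is to apply Lemma \ref{lem:max} uniformly with $G_1 \in \{\Ree(3),\, \PSL(2,8)\}$ and $G_2 = C_n$, and to show that the third alternative of that lemma either contradicts the (proper) maximality of $H$ or forces $H$ to be a direct product. The first two alternatives of Lemma \ref{lem:max} produce subgroups of the form $H_1 \times G_2$ or $G_1 \times H_2$, which are non-skew by definition; only the third alternative needs analysis. In that case, identifying $\ker(\pi_2)$ with a subgroup of $G_1$, this kernel has order a multiple of $|G_1|/\gcd(|G_1|, n)$, and $|H|$ is a multiple of $\lcm(|G_1|, n)$.

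The next step is to compute $\gcd(|G_1|, n)$. One has $|\Ree(3)| = 1512 = 2^3 \cdot 3^3 \cdot 7$ and $|\PSL(2,8)| = 504 = 2^3 \cdot 3^2 \cdot 7$, while $m = 3q_0^2 - 3q_0 + 1$ is odd and satisfies $m \equiv 1 \pmod{3}$. Therefore $\gcd(|G_1|, m) \in \{1, 7\}$, and consequently $\gcd(|G_1|, n) \in \{1, 7\}$ since $n \mid m$. If $\gcd(|G_1|, n) = 1$, the third alternative yields that $|H|$ is a multiple of $|G_1| \cdot n$, so $H = G_1 \times C_n$, contradicting that $H$ is a proper maximal subgroup. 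The remaining case is $\gcd(|G_1|, n) = 7$, which forces $7 \mid n$.

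In that remaining case, the key step is to verify that the only subgroup of $G_1$ of order divisible by $|G_1|/7$ is $G_1$ itself. For $G_1 = \Ree(3)$, the threshold is $216 = 2^3 \cdot 3^3$, and by Theorem \ref{theorem:classification_subgroups_Ree} the maximal subgroups of $\Ree(3)$ are $\PSL(2,8)$, $N_2$, $\hat{F}$, $\hat{N}_+$, of orders $504, 168, 54, 42$, none of which is divisible by $216$. For $G_1 = \PSL(2,8)$, the threshold is $72 = 2^3 \cdot 3^2$, and the maximal subgroups of $\PSL(2,8)$ are (standardly) a Borel subgroup of order $56$ and two dihedral subgroups of orders $18$ and $14$, all strictly less than $72$. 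Hence in both cases $\ker(\pi_2) = G_1 \times \{\id_{C_m}\} \subseteq H$, which forces $H = G_1 \times \pi_2(H)$, a non-skew subgroup.

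The only substantive obstacle is the subgroup-order bookkeeping in the last paragraph; everything else follows directly from Lemma \ref{lem:max} together with the elementary arithmetic of $m$. No deeper structural input is needed beyond the classification of maximal subgroups already recorded in Theorem \ref{theorem:classification_subgroups_Ree} and the standard list of maximal subgroups of $\PSL(2,8)$.
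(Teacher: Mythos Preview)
Your proof is correct and follows essentially the same strategy as the paper: apply Lemma \ref{lem:max}, reduce to the case $7\mid n$ via $\gcd(|G_1|,m)\in\{1,7\}$, and then rule out the third alternative by checking that no proper subgroup of $G_1$ has order divisible by $|G_1|/7$, using the maximal-subgroup lists for $\Ree(3)$ and $\PSL(2,8)$. The only cosmetic difference is that the paper, instead of invoking the $\ker(\pi_2)$ bound from Lemma \ref{lem:max} directly, observes that surjectivity of $\pi_1$ forces $\sigma^m=(\sigma\tau^k)^m\in H$ for every $\sigma\in G_1$, hence every Sylow $2$- and $3$-subgroup of $G_1$ lies in $H$; this leads to the same divisibility obstruction ($2^3\cdot 3^3$ for $\Ree(3)$, $2^3\cdot 3^2$ for $\PSL(2,8)$) and the same conclusion.
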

\begin{proof}
Let $H \subseteq \Ree(3) \times C_n$ be a maximal subgroup. If $\gcd(n,7)=1$, then Lemma \ref{lem:max} implies immediately that $H$ is non-skew.
If $\gcd(n,7)=7$, the same lemma implies that if $H$ is skew, the projection map $\pi_1: H \to \Ree(3)$ is surjective. Hence for any $\sigma \in \Ree(3)$, there exists $\tau^k \in C_n$ such that $\sigma\tau^k \in H.$
But then for any $\sigma \in \Ree(3)$,
$\sigma^m=(\sigma\tau^k)^m \in H$. In particular, any $2$-Sylow and $3$-Sylow subgroup of $\Ree(3)$ is contained in $H$. Since $\Ree(3)$ does not contain a proper subgroup of cardinality a multiple of $2^3\cdot3^3$,
its maximal groups having orders $504$, $168$, $54$, and $42$, we see that $H$ contains $\Ree(3)$. This implies that $H$ was non-skew after all.

A very similar argument works for maximal subgroups of $\PSL(2,8) \times C_n.$ First one deduces that any skew subgroup contains any Sylow $2$- and $3$-group of $\PSL(2,8)$.
However, the maximal subgroups of $\PSL(2,8)$ have order $2\cdot 7$, $2\cdot 3^2$ (coming from dihedral groups $D_7$, $D_9$) or $2^3\cdot 7$ (coming from a point-stabilizer).
Hence the only subgroup of $\PSL(2,8)$ whose order is a multiple of $2^3\cdot 3^2$, is $\PSL(2,8)$ itself. Hence also in this case, $H$ is non-skew.
\end{proof}

This lemma combined with Lemma \ref{lem:max} and the last part of Theorem \ref{theorem:classification_subgroups_Ree} implies that a skew subgroup of $\Ree(3)\times C_m$ is a subgroup of
$N_2 \times C_m$, $\hat{F} \times C_m$, or of $\hat{N}_+ \times C_m$. Since $\gcd(|\hat{F}|,m)=1$ and skew subgroups of $\hat{N}_+ \times C_m$ are necessarily contained in $C_7 \times C_m$ (shown similarly as
Proposition \ref{prop:subgroups2}), it is enough to investigate skew subgroups of $N_2 \times C_m$.
We start by considering maximal subgroups of this group. The subgroup lattice of $N_2$ depicted in Figure \ref{fig:lattice_n2_times_cm} will be very convenient.
For any $i$, we denote by $H_i$ a representative subgroup from the conjugation class $\mathcal H_i.$
\begin{lemma}\label{lem:skew1}
Let $n$ be a divisor of $m$. Any skew subgroup of $N_2 \times C_n$ has a conjugate contained in $H_{56} \times C_m$.
\end{lemma}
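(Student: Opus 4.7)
The plan is to analyze an arbitrary skew subgroup $H \subseteq N_2 \times C_n$ via Goursat's lemma, exploiting the fact that $\gcd(|N_2|,n)$ can only be $1$ or $7$. Set
\[
K_1 = \{\sigma \in N_2 : (\sigma,\id) \in H\}, \quad K_2 = \{\rho \in C_n : (\id,\rho) \in H\}.
\]
Goursat's lemma gives that $K_i$ is normal in $\pi_i(H)$ together with a canonical isomorphism $\pi_1(H)/K_1 \cong \pi_2(H)/K_2$; moreover, $H$ is non-skew precisely when this common quotient is trivial, since in that case $H = K_1 \times K_2$.

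This common quotient is cyclic, being a subgroup of $C_n/K_2$, and its order divides $\gcd(|\pi_1(H)|,n)$. Since $n \mid m = 3q_0^2 - 3q_0 + 1$, $n$ is odd and satisfies $n \equiv 1 \pmod{3}$; combined with $|N_2| = 2^3 \cdot 3 \cdot 7$, this forces $\gcd(|\pi_1(H)|,n) \in \{1,7\}$. Since $H$ is skew, the common quotient is non-trivial and hence has order exactly $7$, so $7 \mid |\pi_1(H)|$ and $[\pi_1(H):K_1]=7$.

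It remains to enumerate subgroups of $N_2$ of order divisible by $7$ admitting a normal subgroup of index $7$. From Figure \ref{fig:lattice_n2_times_cm}, the candidate conjugacy classes are $H_7, H_{21}, H_{56}$ and $N_2$. For $H_7 \cong C_7$ the trivial subgroup works, and for $H_{56} \cong C_2^3 \rtimes C_7$ the Sylow $2$-subgroup is normal of index $7$. The remaining cases are excluded: $H_{21} \cong C_7 \rtimes C_3$ is a Frobenius group, so $C_3$ is not normal in it; and using $N_2 \cong C_2^3 \rtimes (C_7 \rtimes C_3)$ together with the fact that $C_7 \rtimes C_3$ has no normal subgroup of order $3$, one sees that the normal subgroups of $N_2$ are exactly $\{\id\}, C_2^3, H_{56}$ and $N_2$, so $N_2$ has no normal subgroup of order $24$. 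Hence $\pi_1(H)$ is conjugate in $N_2$ to a subgroup of $H_{56}$.

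Finally, choosing $g \in N_2$ with $g\pi_1(H)g^{-1} \subseteq H_{56}$ and conjugating $H$ in $N_2 \times C_n$ by $(g,\id)$ yields a conjugate $H'$ with $\pi_1(H') \subseteq H_{56}$, so
\[
H' \subseteq \pi_1(H') \times \pi_2(H') \subseteq H_{56} \times C_n \subseteq H_{56} \times C_m,
\]
as claimed. The main technical point is the lattice-based identification of subgroups of $N_2$ having a normal subgroup of index $7$; this is settled by the explicit structure of $N_2$, and I expect it to be the only mild obstacle.
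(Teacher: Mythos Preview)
Your proof is correct and takes a genuinely different route from the paper's. The paper argues iteratively through maximal subgroups: it first shows, using the explicit presentation $N_2=\langle s_1,s_2,s_3,l,r\rangle$ and the relation $lrl^{-1}=r^2$, that every maximal subgroup of $N_2\times C_n$ is non-skew, then descends through the cases $H_{21}\times C_n$, $H_{56}\times C_n$, $N_2\times C_{n/p}$, repeating the argument for $H_{21}\times C_n$ to push everything into $H_{56}\times C_n$. Your argument via Goursat's lemma is more structural: the skewness forces the Goursat quotient $\pi_1(H)/K_1$ to be cyclic of order exactly $7$, so $\pi_1(H)$ must be a subgroup of $N_2$ with a normal subgroup of index $7$, and a short analysis of the subgroup lattice (plus the computation $N_2'=H_{56}$ to exclude $N_2$ itself) shows this happens only for $H_7$ and $H_{56}$.

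Your approach is shorter and avoids the iteration over maximal subgroups; the paper's approach, by contrast, uses only the explicit relation $lrl^{-1}=r^2$ and never needs to determine the full list of normal subgroups of $N_2$, so it is more self-contained with respect to the presentation already in hand. One small point: your claim that the normal subgroups of $N_2$ are exactly $\{\id\},\,C_2^3,\,H_{56},\,N_2$ deserves one more line of justification. A clean way is to note that since $C_7$ acts transitively on $C_2^3\setminus\{\id\}$, any normal subgroup intersects $C_2^3$ in $\{\id\}$ or $C_2^3$; in the first case it would contain a (hence all eight) Sylow $7$-subgroups, which is impossible by a count, and in the second case it corresponds to a normal subgroup of $N_2/C_2^3\cong C_7\rtimes C_3$.
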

\begin{proof}
If $\gcd(n,7)=1$, no skew subgroups of $N_2 \times C_n$ exist and there is nothing to prove. Assume therefore that $7|n$.
Write $C_n=\langle \tilde{\tau}\rangle$ and
$N_2= \langle s_1, s_2, s_3, l, r\rangle,$ with $s_1, s_2, s_3$ of order $2$ generating a $2$-Sylow subgroup, $l$ of order $3$, $r$ of order $7$, satisfying $l r l^{-1} = r^2$ and several other relations that we will not need.

Then we can choose $H_{56} := \langle s_1, s_2, s_3, r \rangle$, $H_{24} := \langle s_1, s_2, s_3, l \rangle$ and $H_{21} := \langle l,r \rangle$.
Now first suppose that $H$ be a maximal subgroup of $N_2 \times C_n$ that is skew. Then, reasoning as in the proof of Lemma \ref{lem:nonskew1}, we can conclude that after a suitable conjugation $H_{24}$ is contained in $H$
and that for any $\sigma \in N_2$, there exists $k$ such that $\sigma\tilde{\tau}^k \in H$. In particular $r \tilde{\tau}^a \in H$ for some integer $a$.
It follows that $H \ni (l r \tilde{\tau}^a l^{-1} )(r \tilde{\tau}^a)^{-1}= r.$ But then $N_2 \subseteq H$, so $H$ is not skew after all.
We may conclude that any maximal subgroup of $N_2 \times C_n$ is non-skew and therefore up to conjugation is of the form $H_{21} \times C_n$, $H_{24} \times C_n$, $H_{56} \times C_n$, $N_2 \times C_{n/7}$, or $N_2 \times C_{n/p}$,
for some prime number $p$ distinct from $7$, dividing $n$.

Using Lemma \ref{lem:max}, we see that any skew subgroup of $N_2 \times C_n$ has a conjugate contained in $H_{21} \times C_n$, $H_{56} \times C_n$, or $N_2 \times C_{n/p}$. In the latter case, the above argument can be iterated
leading to the conclusion that
a skew subgroup of $N_2 \times C_{n/p}$ has a conjugate contained in $H_{21} \times C_n$ or $H_{56} \times C_n$. Now, with a similar reasoning as before, one can show that up to conjugation all maximal subgroups of
$H_{21} \times C_n$ are $\langle l \rangle \times C_n$, $\langle r \rangle \times C_n$,
$H_{21} \times C_{n/7}$, and $H_{21} \times C_{n/p}$, for some prime number $p$ distinct from $7$, dividing $n$.
Hence any skew subgroup of $H_{21} \times C_n$ has a conjugate contained in $\langle r \rangle \times C_n \subset H_{56} \times C_n$. The lemma now follows.
\end{proof}
%

It remains to analyze the skew subgroups of $H_{56} \times C_m$, where $H_{56} = \langle s_1, s_2, s_3, r \rangle$ is the unique subgroup of $N_2$ in the conjugation class $\mathcal{H}_{56}$.

\begin{theorem}\label{theorem:N2skew}
  Assume $7 \mid m$. All skew subgroups of $H_{56} \times C_m$ are either of the form $H_{i, w} := \langle s_1, s_2, s_3, r \tau^{i w} \rangle$ or of the form $H_{i, w}^\prime := \langle r \tau^{i w} \rangle$,
  for $1 \leq i \leq 6$ and $7w \mid m$. Define $n := \frac{m}{7w}$. The genus of the quotient curve $\tilde{R}_q/H_{i, w}$ is
  $$ g_{H_{i, w}} = \frac{(q^3+1)(q-n-1) - 7n(q+1) - \delta}{16m} \cdot w + 1, $$
  where $\delta = 0$ if $7 \mid n$ and $\delta = 48m$ if $7 \nmid n$. The genus of the quotient curve $\tilde{R}_q/H_{i, w}^\prime$ is
  $$ g_{H_{i, w}^\prime} = \frac{(q^3+1)(q-n-1)  - \delta^\prime}{2m} \cdot w + 1, $$
  where $\delta^\prime = 0$ if $7 \mid n$ and $\delta^\prime = 6m$ if $7 \nmid n$.
\end{theorem}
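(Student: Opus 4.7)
The plan is first to complete the classification of skew subgroups of $H_{56}\times C_m$ beyond what Lemma \ref{lem:skew1} provides, and then to evaluate $\Delta_H$ for each family via Theorem \ref{theorem_48} before applying Riemann--Hurwitz. Recall $H_{56}=V\rtimes\langle r\rangle$ with $V=\langle s_1,s_2,s_3\rangle\cong(C_2)^3$; since $m$ is odd, $\gcd(|V|,m)=1$, so the $2$-part decouples in any subgroup $H$: given any $(s\tau^k,\,\cdot)\in H$ with $s$ an involution, squaring yields $\tau^{2k}\in H$, whence $\tau^k\in H$ and $s\in H$. Moreover $\langle r\rangle$ acts transitively on $V\setminus\{\id\}$, so the only subgroup of $V$ stable under an element $r\tau^a$ with $a\not\equiv 0\pmod 7$ is $\{\id\}$ or $V$ itself. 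The classification thus reduces to enumerating skew subgroups of the abelian group $\langle r\rangle\times C_m\cong C_7\times C_m$ (with $7\mid m$). Using the primary decomposition $C_7\oplus C_{7^e}\oplus C_{m'}$ for $m=7^em'$, $\gcd(m',7)=1$, a direct case analysis shows that every skew subgroup is cyclic and generated by an element of the form $r\tau^{iw}$ with $1\le i\le 6$ and $7w\mid m$. Reattaching the $V$-factor yields exactly the two families $H_{i,w}$ and $H'_{i,w}$.

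Setting $n:=m/(7w)$ and choosing a representative of $(i,w)$ with $\gcd(i,n)=1$, the element $r\tau^{iw}$ has order $7n=m/w$, so $|H_{i,w}|=56n=8m/w$ and $|H'_{i,w}|=7n=m/w$. Every element of $H_{i,w}$ is uniquely $(sr^j,\,jiw\bmod m)$ with $s\in V$ and $0\le j<7n$; I would stratify by the order of $\sigma:=sr^j\in\Ree(q)$. The case $\sigma=\id$ (forcing $s=\id$ and $7\mid j$) contributes $n-1$ nontrivial pure $\tau$-powers, each adding $q^3+1$ to $\Delta_H$ by Theorem \ref{theorem_48}; the case $\sigma\in V\setminus\{\id\}$ of order $2$ (with $7\mid j$, $s\ne\id$) contributes $7n$ elements, each adding $q+1$ via item~4; and the case $\sigma=sr^j$ of order $7$ (with $j\not\equiv 0\pmod 7$, using the identity $(sr^j)^7=\id$ verified from the affine-group structure of $H_{56}$) contributes $48n$ elements, all lying in $\Sigma_-$ since $7\mid m$, so item~9 applies with $\iota\in\{0,m\}$. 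Summing the first two strata gives $(n-1)(q^3+1)+7n(q+1)$.

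The delicate task is the order-$7$ contribution producing $\delta$. By an order-$7$ analogue of Proposition \ref{prop:sixjs} (its proof relies on the identity $\gamma\lambda^{k_d}=\tilde{w}(a,b,c)^{q^d}$ and on $\gamma\in\mathbb{F}_q^*$, both of which are insensitive to the exact order of $\sigma$ within $\Sigma_-$), the six ``bad'' $\tau$-exponents $k$ with $\iota(\sigma\tau^k)=m$ form a single $q$-Galois orbit modulo $m$, while the $\tau$-exponents in $H_{i,w}$ paired with a fixed $\sigma$ form the coset $jiw+7w\mathbb{Z}/m\mathbb{Z}$ of the index-$n$ subgroup. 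A counting comparison between this coset and the six-element orbit yields exactly one bad exponent per $\sigma$ when $7\nmid n$ and none when $7\mid n$; summing over the $48$ order-$7$ elements gives $\delta=48m$ or $0$ accordingly. Plugging $\Delta_{H_{i,w}}=(n-1)(q^3+1)+7n(q+1)+\delta$ and $|H_{i,w}|=8m/w$ into Riemann--Hurwitz yields the first genus formula. For $H'_{i,w}$ the analysis is identical but with the $V$-stratum absent: only the pure $\tau$-contribution $(n-1)(q^3+1)$ and an order-$7$ stratum of $6n$ elements remain, the latter contributing $\delta'=6m$ or $0$ under the same dichotomy, giving the second formula. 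The main obstacle is precisely this coset-versus-orbit tally: one must first extend Proposition \ref{prop:sixjs} to order-$7$ elements of $\Sigma_-$, and then verify that the $q$-orbit of the six bad residues meets each coset of index $n$ in exactly one element when $7\nmid n$ and in none when $7\mid n$.
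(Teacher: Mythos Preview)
Your classification of the skew subgroups is correct and parallels the paper's argument closely: the paper also shows that any skew subgroup of $H_{56}\times C_m$ is, up to conjugation, either $\langle r\tau^{iw}\rangle$ or $\langle s_1,s_2,s_3,r\tau^{iw}\rangle$, using Schur--Zassenhaus and the transitive action of $r$ on $V\setminus\{\id\}$ in essentially the way you describe. Your stratification of $\Delta_H$ by the order of the $H_{56}$-component is also exactly what the paper does for the contributions coming from $\tau$-powers and from involutions.

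The genuine difference lies in how the order-$7$ contribution $\delta$ (resp.\ $\delta'$) is computed. You propose to extend Proposition~\ref{prop:sixjs} to order-$7$ elements of $\Sigma_-$ and then carry out an explicit coset-versus-orbit intersection count. This can be made to work: the relation $k_d\equiv k_0q^d\pmod m$ indeed survives for any nontrivial $\sigma\in\Sigma_-$ since it only uses $\gamma\in\mathbb{F}_q^*$, and one can then check (using that $q$ is a primitive root modulo $7$ and that $\gcd(i,m)=1$) that the six-element $q$-orbit meets the relevant coset of $7w\mathbb{Z}/m\mathbb{Z}$ in exactly one point when $7\nmid n$ and not at all when $7\mid n$. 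One point to be careful about is that the part of Proposition~\ref{prop:sixjs} asserting $\gcd(k_0,m)=1$ genuinely uses $\ord(\sigma)=m$ and need not hold for order-$7$ elements; fortunately your argument only requires the weaker $q$-orbit statement, so this is not fatal.

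The paper avoids this entire detour. It simply observes that $\langle r\tau^{iw}\rangle$ is, after identifying $r$ with $\sigma^{m/7}$ for a generator $\sigma$ of $\Sigma_-$, a subgroup of $\Sigma_-\times C_m$ with standard exponents $(m/7,\,7w,\,iw)$, and then reads off $\Delta_{H'_{i,w}}$ directly from the already-established Theorem~\ref{theorem:genus_cm_times_cm_Ree}. The $\nu_{d,\ell}$-analysis built into that theorem automatically performs the coset-versus-orbit count you describe, yielding $\delta'\in\{0,6m\}$ according to whether $7\mid n$. For $H_{i,w}$, the paper notes that the $48n$ order-$7$ elements are partitioned among the eight cyclic subgroups $\langle sr\tau^{iw}\rangle$ with $s\in V$, each conjugate (via an element of $\Ree(q)$) to $H'_{i,w}$, so $\delta=8\delta'$. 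This route is shorter because it reuses Theorem~\ref{theorem:genus_cm_times_cm_Ree} as a black box rather than re-deriving its content for the special case $n_1=m/7$.

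A small slip: the subgroup $7w\mathbb{Z}/m\mathbb{Z}$ you intersect with has order $n$ and index $7w$, not index $n$ as you wrote.
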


\begin{proof}
Observe that $H_{56}$ can be presented as $H_{56} := \langle s_1, s_2, s_3, r \rangle$ as in the proof of Lemma \ref{lem:skew1} with $s_1^2=s_2^2=s_3^2=\id=r^7$, $rs_1=s_2r$, $rs_2=s_3r$, and $rs_3=s_2s_3rr$.
The $s_i$ commute with each other, meaning that the group $\langle s_1, s_2, s_3\rangle$ is an elementary abelian $2$-group of order eight.
The group $H_{56}$ has exactly one subgroup of index $7$, namely $\langle s_1, s_2, s_3\rangle$, the Sylow 2-subgroup of $N_2$ of order $8$. By definition
$N_2$ is the normalizer of a $2$-Sylow subgroup of $\Ree(3)$. Since the $2$-Sylow subgroup of $N_2$ is contained in $H_{56}$, it is also normal in $H_{56}$.
Hence $H_{56}$ has one element of order $1$ and seven elements of order $2$. One can check that the remaining $48$ elements have order seven.
This means that any cyclic subgroup of $H_{56}$ of order $7$ is conjugated with $\langle r \rangle$. In particular, such subgroups are not normal. From Figure \ref{fig:lattice_n2_times_cm} we conclude that a nontrivial
normal subgroup of $H_{56}$, necessarily is contained in the $2$-Sylow subgroup $\langle s_1,s_2,s_3\rangle$. In particular $[H_{56},H_{56}] \subset \langle s_1,s_2,s_3\rangle$, where $[H_{56},H_{56}]$ denotes
the commutator subgroup of $H_{56}$. Also, Sylow's theorem implies that the normalizer of $\langle r \rangle$ in $H_{56}$ is $\langle r \rangle$ itself, since it needs to have index eight in $H_{56}$.
As a consequence, the only elements in $H_{56}$ that commute with $r$ are powers of $r$.

Now let $H \subset H_{56} \times C_m$ be a skew subgroup and denote by $\pi_1:H \to H_{56}$ the projection on the first coordinate.
If $s \in \pi_1(H)$ has order two, then $s \in H$, since for any $\tau^a \in C_m$, $s=(s\tau^a)^m \in H$.
Since $H$ is skew, this implies that there exists an element $\tilde{r} \in H_{56}$ of order seven and $\tau^a \in C_m$ such that $\tilde{r}\tau^a \in H$.
Since all subgroups of order seven in $H_{56}$ are conjugated, this means that a conjugate of $H$ contains an element of the form $r^i\tau^a$ for some $i \in \{1,\dots,6\}$.
Redefining $H$ to be that conjugate, we conclude that $r\tau^b \in H$ for some positive integer $b$, by taking a suitable power of the element $r^i\tau^a$.
Now let $n_2$ the smallest positive integer such that $\tau^{n_2} \in H$ and $0 \le a < n_2$ be the smallest nonnegative integer such that $r\tau^a \in H$.

If $H=\langle r\tau^a,\tau^{n_2} \rangle$, then since $H$ is skew, we have $a>0$. Note that from Lemma \ref{lem:triple}, we may conclude that
$n_2|7a$ and since $0<a<n_2$ implies $n_2 \not| a$, we have $n_2/7|a$. In particular, $a=i n_2/7$ for some $i=1,\dots,6$, which implies that a suitable power of $r\tau^a$ is equal to $\tau^{n_2}$.
Here we used $\gcd(i,m)=1$ for $i=1,\dots,6$. We conclude that $H=\langle r\tau^a\rangle$. Defining $w=n_2/7$, we see that $H=H_{i,w}^\prime$.
Note that the genus of $\tilde{\mathcal R}_q/H_{i,w}^\prime$ is the same as the one corresponding to the subgroup of $C_m \times C_m$ with standard exponents $(m/7,n_2,a)$. Therefore, that genus can be obtained directly from
Theorem \ref{theorem:genus_cm_times_cm_Ree}. Writing $p_1=7$, first of all note that if $p_\ell \neq 7$, then $\nu_{d,\ell}=v_{p_\ell}(n_2)$, since $n_1=m/7$ and $n_2/7 | a$. Also note that since $q$ has multiplicative order six modulo
$m$ and we assume that $7$ divides $m$, the element $q$ is a primitive element modulo seven. Moreover, we have $\frac{m}{7}q^d-a=\frac{n_2}{7}\left(\frac{m}{n_2}q^d-i\right)$ for some $1 \le i \le 6$.
Hence, we see that if $7|m/n_2$, then $\nu_{d,1}=v_7(n_2/7)=v_7(n_2)-1$ for all $d$, while if $7 \not| m/n_2$, then there exists exactly one $d$ such that $\nu_{d,1}=v_7(n_2)$, while $\nu_{d,1}=v_7(n_2)-1$ for the remaining values of $d$.
Combining the above, we see that
$$\Delta_{H_{i,w}^\prime} = \left( \frac{m}{7w} - 1 \right) \cdot (q^3 + 1) \quad \text{if $7$ divides $\frac{m}{n_2}$}$$
and
$$\Delta_{H_{i,w}^\prime} = \left( \frac{m}{7w} - 1 \right) \cdot (q^3 + 1) + \left( 7 - 1 \right) \cdot m \quad \text{if $7$ does not divide $\frac{m}{n_2}$}.$$
The stated genus formula for $g_{H_{i,w}^\prime}$ now follows.

Now suppose that $\langle r\tau^a\rangle \subsetneq H$ and choose an element $g\tau^b \in H \setminus \langle r\tau^a\rangle$ for some $g\in H_{56}$.
Then $H \ni r\tau^a g\tau^b \tau^{-a}r^{-1}\tau^{-b}g^{-1}=rgr^{-1}g^{-1}$ is an element of the commutator subgroup of $H_{56}$ and hence in $\langle s_1,s_2,s_3\rangle$. We denote this element by $s$.
If $s=rgr^{-1}g^{-1}=\id$, then $g$ would have been a power of $r$, but then $g\tau^b \in \langle r\tau^a\rangle=\langle r\tau^a,\tau^{n_2}\rangle$ from the definition of $a$ and $n_2$.
Hence $H$ contains the element $s \in\langle s_1,s_2,s_3\rangle\setminus \{\id\}$. Now conjugation by $r\tau^a$ permutes the elements of the set $\langle s_1,s_2,s_3\rangle\setminus \{\id\}$
with a permutation of order a divisor of seven. Since $rs_1r^{-1}=s_2$, the action is not trivial. Hence conjugation by $r\tau^a$ cyclically permutes the elements of the set $\langle s_1,s_2,s_3\rangle\setminus \{\id\}$.
In particular, we may conclude that $\langle s_1,s_2,s_3\rangle \subset H,$ implying that $H=\langle s_1,s_2,s_3,r\tau^a\rangle=H_{i,w}.$ The genus computation is now very similar as before. The eight cyclic subgroups
$\langle sr\tau^a\rangle$, with $s \in\langle s_1,s_2,s_3\rangle\setminus \{\id\}$, give rise to the contribution $\delta$ to $\Delta_{H_{i,w}}$, while the elements of the form $s \tau^{jn_2}$ give rise to the contribution $7m/n_2(q+1)$.
The remaining elements of the form $\tau^{jn_2}$ give the contribution $(m/n_2-1)(q^3+1)$.
  \end{proof}

The following genera, obtained using Theorems \ref{theorem:PSL28} and \ref{theorem:N2nonskew} for $s = 1$, are new up to our knowledge. Note that for $s=1$ Theorem \ref{theorem:N2skew} cannot give new genera, because $7$ does not divide
$m$ in this case.

\begin{table}[h]
  \centering
  \begin{tabularx}{\textwidth}{llX}
    \hline
    s & Field & Genus \\
    \hline
    1 & $\mathbb{F}_{3^{18}}$ & 445, 4393 \\
    \hline
  \end{tabularx}
  \caption{New genera for $s=1$ from Theorems \ref{theorem:PSL28} and \ref{theorem:N2nonskew}.}
  \label{tab:Ree3timesCmskew}
\end{table}

\section*{Acknowledgements}

The first and third author would like to acknowledge the support from The Danish Council for Independent Research (DFF-FNU) for the project \emph{Correcting on a Curve}, Grant No.~8021-00030B.


\end{document}